%v2 : New Saksman's convolution formula and Helson inequality for all p.
%v3 : First corrections
%v4 : After six months (december). Many improvements to Section 5, minor changes elsewhere
%v5 : january - last corrections to Section 5 and submission

\documentclass[11pt,a4paper]{amsart}
\usepackage{amssymb,xspace}
\usepackage{amstext}
\usepackage{tikz}
\usetikzlibrary{arrows,decorations.pathmorphing,backgrounds,fit,positioning,shapes.symbols,chains}
\theoremstyle{plain}
\usepackage{amsbsy,amssymb,amsfonts,latexsym}

\marginparwidth=10 true mm
\oddsidemargin=0 true mm
\evensidemargin=0 true mm
\marginparsep=5 true mm
\topmargin=0 true mm
\headheight=8 true mm
\headsep=4 true mm
\topskip=0 true mm
\footskip=15 true mm

\setlength{\textwidth}{150 true mm}
\setlength{\textheight}{220 true mm}
\setlength{\hoffset}{8 true mm}
\setlength{\voffset}{2 true mm}

\parindent=0 true mm

\usepackage{enumerate}
\usepackage{graphics}

\date{\today}
\title[General Dirichlet series]{Convergence and almost sure properties in Hardy spaces of Dirichlet series}

\author{Fr\'{e}d\'{e}ric Bayart}
\address{Laboratoire de Mathématiques Blaise Pascal UMR 6620 CNRS, Université Clermont Auvergne, Campus universitaire des Cézeaux, 3 place Vasarely, 63178 Aubière Cedex, France.} 
\email{frederic.bayart@uca.fr}
\thanks{The author was partially supported by the grant ANR-17-CE40-0021 of the French National Research Agency ANR (project Front).}

\subjclass{43A17, 30B50, 30H10}

\keywords{Dirichlet series, Hardy spaces, abscissa of convergence, Helson theorem}

\newcommand{\veps}{\varepsilon}

\def\QQ{\mathbb Q}

\def\RR{\mathbb R}

\def\NN{\mathbb N}
\def\ZZ{\mathbb Z}
\def\TT{\mathbb T}

\def\CC{\mathbb C}

\def\dinfext{\mathcal D_\infty^{\textrm{ext}}(\lambda)}
\def\dinf{\mathcal D_\infty(\lambda)}
\def\pollambda{\textrm{Pol}_\lambda(G)}

\DeclareMathOperator{\card}{card}

\newtheorem{theorem}{Theorem}[section]

\newtheorem{lemma}[theorem]{Lemma}

\newtheorem{proposition}[theorem]{Proposition}

\newtheorem{corollary}[theorem]{Corollary}

{\theoremstyle{definition}}
{\theoremstyle{definition}}

{\theoremstyle{definition}\newtheorem{example}[theorem]{Example}}

{\theoremstyle{definition}\newtheorem{definition}[theorem]{Definition}}

{\theoremstyle{definition}}

{\theoremstyle{definition}\newtheorem{remark}[theorem]{Remark}}

\newtheorem{question}[theorem]{Question}

\newtheorem*{theoremDS}{Theorem A}

%Environnement d\'emonstration

%Environnement remarque

%Environnement Exemple

\begin{document}

\begin{abstract}
Given a frequency $\lambda$, we study general Dirichlet series $\sum a_n e^{-\lambda_n s}$. First, we give a new condition on $\lambda$ which ensures that a somewhere convergent Dirichlet series defining a bounded holomorphic function in the right half-plane converges uniformly in this half-plane, improving classical results of Bohr and Landau. Then, following recent works of Defant and Schoolmann, we investigate Hardy spaces of these Dirichlet series. We get general results on almost sure convergence which have an harmonic analysis flavour. Nevertheless, we also exhibit examples showing that it seems hard to get general results on these spaces as spaces of holomorphic functions.
\end{abstract}

\maketitle

\section{Introduction}
\subsection{$\lambda$-Dirichlet series and their convergence}
A general Dirichlet series is a series $\sum_n a_n e^{-\lambda_n s}$ where $(a_n)\subset\CC^\mathbb N$, $s\in\CC$ and $\lambda=(\lambda_n)$ is an increasing sequence of nonnegative real numbers tending to $+\infty$, called a \emph{frequency}. We shall denote by $\mathcal D(\lambda)$ the space of all formal $\lambda$-Dirichlet series. The two most natural examples are $(\lambda_n)=(n)$ which gives rise to power series and $(\lambda_n)=(\log n)$, the case of ordinary Dirichlet series. A proeminent problem at the beginning of the twentieth century was the study of the convergence of these series, starting from the following theorem of Bohr \cite{Bohr13} on ordinary Dirichlet series: let $D=\sum_n a_n n^{-s}$ be a somewhere convergent ordinary Dirichlet series having a holomorphic and bounded extension to the half-plane $\CC_0$. Then $D$ converges uniformly on each half-place $\CC_\veps$ for all $\veps>0$. Here, $\CC_\theta$ means the right half-plane $[\mathrm{Re}s>\theta]$. 

An important effort was done to extend this result to general frequencies. Two sufficient conditions were isolated firstly by Bohr \cite{Bohr13} and then by Landau \cite{Landau21}. Following the terminology of \cite{Schoolmann20}, we say that a frequency satisfies (BC) provided 
\begin{equation}\label{eq:bc}\tag{BC}
%\exists \ell>0,\ \forall\delta>0,\ \exists C>0,\ \forall n\in\NN,\ \lambda_{n+1}-\lambda_n\geq Ce^{-(\ell+\delta)\lambda_n}.
\exists \ell>0,\ \exists C>0,\ \forall n\in\NN,\ \lambda_{n+1}-\lambda_n\geq Ce^{-\ell\lambda_n}.
\end{equation}
A frequency $\lambda$ satisfies (LC) provided
\begin{equation}\label{eq:lc}\tag{LC}
\forall \delta>0,\ \exists C>0,\ \forall n\in\NN,\ \lambda_{n+1}-\lambda_n \geq Ce^{-e^{\delta\lambda_n}}.
\end{equation}
Of course, (BC) is a stronger condition than (LC), and Landau has shown that any frequency satisfying (LC) verifies Bohr's theorem: all Dirichlet series $D=\sum_n a_n e^{-\lambda_n s}$ belonging to $\dinfext$, the space of somewhere convergent $\lambda$-Dirichlet series which allow a holomorphic and bounded extension to $\CC_0$, converge uniformly to $\CC_\veps$, for all $\veps>0$.

This problem was studied again recently in \cite{CKM18} and \cite{Schoolmann20} in connection with Banach spaces of Dirichlet series and quantitative estimates. Define by $\dinf$ the subspace of $\dinfext$ of Dirichet series which converge on $\CC_0$ (in general, $\dinf$ can be a proper subspace of $\dinfext$ if $\lambda$ does not satisfy Bohr's theorem) and define $S_N:\dinfext\to\dinf$, $\sum_{1}^{+\infty}a_n e^{-\lambda_n s}\mapsto \sum_1^N a_n e^{-\lambda_n s}$ the $N$-th partial sum operator. In \cite{Schoolmann20}, a thorough study of the norm of $S_N$ is done (the case of ordinary Dirichlet series was settled in \cite{Baymonat}), leading to consequences on the existence of a Montel's theorem in $\dinfext$ or on the completeness of $\dinf$.

Our first main result is an extension of the results of \cite{Bohr13,Landau21,Schoolmann20}: we provide a new class of frequencies, that we will call (NC) (see Definition \ref{def:NC}) such that Bohr's theorem, and all its consequences, are true. Like (BC) or (LC), this class of frequencies quantifies how fast $\lambda$ goes to $+\infty$ and how close its terms are, but in a less demanding way since (NC) is strictly weaker than (LC).

Our method of proof also differs from that of \cite{Schoolmann20}. In \cite{Schoolmann20}, the estimation of $\|S_N\|$ is based on Riesz means of $\lambda$-Dirichlet polynomials: recall that for a sequence of complex numbers $(c_n)$ and for $k>0$, the finite sum
$$R_x^{\lambda,k}(\sum_n c_n):=\sum_{\lambda_n<x}\left(1-\frac{\lambda_n}x\right)^k c_n$$
is called the first $(\lambda,k)$-Riesz mean of $\sum_n c_n$ of length $x>0$. I. Schoolmann uses approximation of $D$ by $R_{x}^{\lambda,k}(D)$ for a suitable choice of $k$ to deduce its result on $\|S_N\|$. Our alternate approach is based on mollifiers and on a formula of convolution due to Saksman for ordinary Dirichlet series.

\subsection{Hardy spaces of $\lambda$-Dirichlet series: Banach spaces and harmonic analysis}

Our second approach deals with Hardy spaces of Dirichlet series. For ordinary Dirichlet series, they have been introduced and studied in \cite{HLS} (see also \cite{Baymonat}) and this has caused an important renew of interest for this subject. The general case has been introduced and investigated very recently in \cite{DS19,DSHelson,DSRiesz}. For $p\in[1,+\infty)$, the Hardy space $\mathcal H_p(\lambda)$ may be defined as follows: given a $\lambda$-Dirichlet polynomial $D=\sum_{n=1}^N a_n e^{-\lambda_n s}$, define its $\mathcal H_p$-norm by
$$\|D\|_p^p:=\lim_{T\to+\infty}\frac 1{2T}\int_{-T}^T |D(it)|^p dt.$$
Then $\mathcal H_p(\lambda)$ is the completion of the set of $\lambda$-Dirichlet polynomials for this norm. However, this internal description is often not sufficient to get the main properties of $\mathcal H_p(\lambda)$ and we need a group approach. Let $G$ be a compact abelian group and let $\beta:(\RR,+)\to G$ be a continuous homomorphism with dense range. Then we say that $(G,\beta)$ is a $\lambda$-Dirichlet group provided, for all $n\in\NN$, there exists $h_{\lambda_n}\in\hat G$ such that $h_{\lambda_n}\circ\beta=e^{-i\lambda_n\cdot}$. The space $H_p^\lambda(G)$, $p\in[1,+\infty]$ is then defined as the subspace of $L^p(G)$ of functions $f$ such that $\textrm{supp}(\hat f)\subset \{h_{\lambda_n}:\ n\in\NN\}$. Now define the Bohr map $\mathcal B$ by 
\begin{align*}
\mathcal B:H_p^{\lambda}(G)&\to\mathcal D(\lambda)\\
f&\mapsto \sum \hat{f}(h_{\lambda_n})e^{-\lambda_n s}
\end{align*}
and set $\mathcal H_p(\lambda)=\mathcal B(H_p^{\lambda}(G))$ with $\|\mathcal Bf\|_p:=\|f\|_p$. Then it has been shown in \cite{DS19} that
\begin{itemize}
\item given a frequency $\lambda$, there always exists a $\lambda$-Dirichlet group $(G,\beta)$;
\item the Hardy space $\mathcal H_p(\lambda)$ does not depend on the choosen $\lambda$-Dirichlet group;
\item when $p\neq+\infty$, it coincides with $\mathcal H_p(\lambda)$ defined internally.
\end{itemize}
Our second aim in this paper is solve some of the problems on the spaces $\mathcal H_p(\lambda)$ raised in \cite{DSsurvey} and to exhibit new properties of them. In particular, we investigate properties of $\mathcal H_p(\lambda)$ coming from functional analysis and harmonic analysis.

\smallskip

As an example, let us discuss a famous therorem of Helson \cite{Hel69} which ensures, in our terminology, that if $\lambda$ satisfies (BC) and $D=\sum_n a_n e^{-\lambda_ns}$ belongs to $\mathcal H_2(\lambda)$, then for almost all homomorphisms $\omega:(\RR,+)\to \TT$, the Dirichlet series $\sum_n a_n \omega(\lambda_n) e^{-\lambda_n s}$ converges on $\CC_0$. This has been extended to the Hardy spaces $\mathcal H_p((\log n))$ for $p\geq 1$ by Bayart in \cite{Baymonat} and this result is at the heart of many further investigations on these spaces (e.g. composition operators, Volterra operators). Therefore, it is a challenge to put it in the general framework of  $\mathcal H_p(\lambda)$ or, equivalenty - via the Bohr transform - of $H_p^\lambda(G)$. When $\lambda$ satisfies (BC), this has been done in \cite{DSHelson}, adding moreover the maximal inequality.
\begin{theoremDS}[Defant-Schoolmann]
Let $\lambda$ satisfy (BC), let $(G,\beta)$ be a $\lambda$-Dirichlet group. For every $u>0$, there exists a constant $C:=C(u,\lambda)$ such that, for all $1\leq p\leq +\infty$ and for all $f\in H_p^{\lambda}(G)$, 
\begin{equation}\label{eq:DSHelson}
\left\|\sup_{\sigma\geq u}\sup_N \left|\sum_{n=1}^N \hat{f}(h_{\lambda_n})e^{-\sigma \lambda_n}h_{\lambda_n}\right|\right\|_p\leq C\|f\|_p.
\end{equation}
In particular, for every $u>0$, $\sum_{1}^{+\infty}\hat f(h_{\lambda_n}) e^{-u \lambda_n}h_{\lambda_n}$ converges almost everywhere on $G$. 
\end{theoremDS}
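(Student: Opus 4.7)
The strategy I would follow is to produce, for each $u>0$, a single positive kernel $K_u \in L^1(G)$ whose $L^1$-norm depends only on $u$ and $\lambda$, and to establish the pointwise domination
$$\sup_{\sigma\geq u}\sup_N \left|\sum_{n=1}^N \hat{f}(h_{\lambda_n})e^{-\sigma \lambda_n}h_{\lambda_n}(\chi)\right|\leq (|f|\ast K_u)(\chi), \qquad \chi\in G.$$
Young's inequality on the compact group $G$ then yields \eqref{eq:DSHelson} for every $1\leq p\leq \infty$ with the common constant $C(u,\lambda)=\|K_u\|_{L^1(G)}$, which is precisely the strong, uniform-in-$p$ estimate that the statement demands; this requires the convolution domination since, e.g., a Hardy--Littlewood style argument would lose the endpoint $p=1$.

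To build $K_u$, the first step is to represent the Abel translate $T_\sigma f:=\sum_n \hat f(h_{\lambda_n}) e^{-\sigma \lambda_n}h_{\lambda_n}$ as a convolution against a $\lambda$-Poisson kernel $\mathcal P_\sigma^\lambda$ on $G$, obtained by transporting the half-plane Poisson kernel through $\beta$ or, more abstractly, from the positive-definite function $x\mapsto e^{-\sigma|x|}$ via Bochner's theorem; this is a nonnegative $L^1$-kernel with $\|\mathcal P_\sigma^\lambda\|_1\leq 1$. The semigroup identity $\mathcal P_\sigma^\lambda = \mathcal P_u^\lambda \ast \mathcal P_{\sigma-u}^\lambda$ for $\sigma\geq u$ then lets one absorb the supremum in $\sigma$ into a convolution against $\mathcal P_u^\lambda$. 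The second step addresses the partial sum $S_N$: I would replace the sharp cutoff $\mathbf 1_{\lambda_n\leq \lambda_N}$ by a smooth mollifier $\varphi(\lambda_n/\lambda_N)$, producing a Riesz-mean type kernel $F_N\in L^1(G)$. Under $(BC)$, Schoolmann's quantitative Bohr--Landau theorem \cite{Schoolmann20} provides the uniform bound $\|F_N\|_{L^1(G)}\leq C(\lambda)$ in $N$, provided $\varphi$ is a Riesz mean of sufficiently high order $k$. The difference $S_Nf-F_Nf$ is concentrated on the $\lambda_n$ clustered near $\lambda_N$, and the Abel factor $e^{-u\lambda_N}$ makes it summable in $N$.

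Combining the two ingredients yields a decomposition $K_{N,\sigma}=\mathcal P_\sigma^\lambda \ast F_N+R_{N,\sigma}$ of the kernel representing $S_N T_\sigma$, from which $K_u(\omega):=\sup_{N,\sigma\geq u}|K_{N,\sigma}(\omega)|$ is seen to lie in $L^1(G)$ with $\|K_u\|_1\leq C(u,\lambda)$. This proves the maximal inequality. The almost-everywhere convergence of $\sum_n\hat f(h_{\lambda_n})e^{-u\lambda_n}h_{\lambda_n}$ then follows by the standard truncation argument: the assertion is trivial for $\lambda$-Dirichlet polynomials, which are dense in $H_p^\lambda(G)$ for $p<\infty$, and the maximal inequality transfers convergence to the full space. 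The case $p=\infty$ is handled by reduction to $p=1$ via the inclusion $H_\infty^\lambda(G)\subset H_1^\lambda(G)$.

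The main obstacle is the uniform-in-$N$ control of $\|F_N\|_{L^1(G)}$: a naive mollifier produces kernels whose norm grows with $N$, and only a choice tightly adapted to the spacing condition $(BC)$ gives a uniform bound. This sharp quantitative form of the Bohr--Landau theorem is precisely where the specific decay $\lambda_{n+1}-\lambda_n\geq Ce^{-\ell\lambda_n}$ enters the proof in an essential way; without this quantitative input, the pointwise-dominating kernel $K_u$ cannot be constructed and the strong, uniform-in-$p$ inequality fails.
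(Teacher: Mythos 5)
Your overall plan---dominate the maximal operator pointwise by $|f|\star K_u$ for a single kernel $K_u\in L^1(G)$ and conclude by Young's inequality---does not work, and the two assertions on which it rests are the places where it breaks. First, a uniform bound on the individual kernel norms is not available: under (BC) the quantitative Bohr--Landau estimate of Schoolmann gives $\|S_N\|\lesssim\log\bigl(\lambda_{N+1}/(\lambda_{N+1}-\lambda_N)\bigr)$, which is of order $\lambda_N$ (not $O(1)$), and no choice of Riesz order $k$ makes $\|F_N\|_{L^1(G)}$ bounded in $N$; moreover for a Riesz mean the multiplier $(1-\lambda_n/\lambda_N)^k$ differs from $1$ on \emph{all} of $[0,\lambda_N)$, so $S_Nf-F_Nf$ is not ``concentrated near $\lambda_N$'', and even for a trapezoidal cutoff the transition window may contain on the order of $e^{\ell\lambda_N}$ frequencies, which the Abel factor $e^{-u\lambda_N}$ cannot absorb when $u<\ell$ (and $u>0$ is arbitrary), so the error terms are not summable in $N$. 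Second, and more fundamentally, even if each $K_{N,\sigma}$ had uniformly bounded $L^1$-norm, the passage to $K_u:=\sup_{N,\sigma\geq u}|K_{N,\sigma}|\in L^1(G)$ is unjustified and is in general false: already on $\mathbb T$ the pointwise supremum of the Fej\'er kernels behaves like $1/|t|$ and is not integrable, which is precisely why strong maximal inequalities (especially at the endpoint $p=1$) are never obtained by a single-kernel Young-type domination. The absorption of $\sup_{\sigma\geq u}$ by the Poisson semigroup has the same defect: it produces a vertical maximal function, not a convolution with one fixed kernel.

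By contrast, the paper's argument (given there for the weaker hypothesis (NC), hence covering (BC)) never tries to build one dominating kernel. It linearizes the supremum by a measurable stopping index $n(\omega)$, represents the adapted smoothed sums $R_{\phi_n}f(\omega)=\int_{\RR}f(\omega\beta(t))\phi_n(t)\,dt$ via Saksman's vertical convolution formula with kernels $\phi_n=\psi_{\lambda_n,h_n}$ whose parameters depend on $n(\omega)$, performs the change of variables $\omega'=\omega\beta(t)$ on $G$, and uses the $L^1(\RR)$ estimate for $t\mapsto\psi_{a(t),h(t)}(t)$ with \emph{varying} parameters (Lemma \ref{lem:L1norm}); interpolation is carried out for the operator extended to all of $L_p(G)$ (circumventing the failure of interpolation for $H_p^\lambda(G)$), yielding the weighted Carleson--Hunt bound $\bigl\|\sup_{n\leq N}|S_nf|\bigr\|_p\leq Ce^{\delta\lambda_N}\|f\|_p$ with a deliberate loss $e^{\delta\lambda_N}$, $\delta$ arbitrarily small. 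The Abel weights are then brought in through Lemma 3.4 of \cite{DSHelson} and a block decomposition $A_k=\{n:\lambda_n\in[k,k+1)\}$, $G_k=\{n(\omega)\in A_k\}$, which lets the factor $e^{-\sigma\lambda_n}$, $\sigma\geq u>2\delta$, absorb the loss. This bookkeeping---accepting a controlled exponential loss and absorbing it with the Abel factor---is exactly the content your ``uniform $\|F_N\|_1$'' and ``summable in $N$'' claims attempt to bypass, and without it the uniform-in-$p$ inequality, in particular the strong $p=1$ case, is not established.
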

When $\lambda$ satisfies (LC), the almost everywhere statement is known to be true, as well as the maximal inequality for $p>1$ with a constant now depending on $p$. When $p=1$, it is valid if we replace the $L_1(G)$-norm by the weak $L_1(G)$-norm. We shall prove that inequality \eqref{eq:DSHelson} remains true even on the weaker assumption that $\lambda$ satisfies (NC), even for $p=1$, and with a constant independent of $p$. Our approach, which seems less technical than that of \cite{DSHelson}, is based again on a version of Saksman's convolution formula and on a Carleson-Hunt type maximal inequality of independent interest.

\subsection{$\mathcal H_p(\lambda)$ as a Banach space of holomorphic functions}
The results announced in the previous section indicate that the spaces $\mathcal H_p(\lambda)$ seem to behave well if we look at their almost sure properties. The classical case $\mathcal H_p((\log n))$ was also investigated as a Banach space of holomorphic function. Even in that case, it is a nontrivial problem to determine the optimal 
half-plane of convergence of elements in $\mathcal H_p(\lambda)$, namely to compute
$$\sigma_{\mathcal H_p(\lambda)}:=\inf\{\sigma\in\RR:\ \sigma_c(D)\leq \sigma\textrm{ for all }D\in\mathcal H_p(\lambda)\}$$
where, for a Dirichlet series $D\in\mathcal D(\lambda)$, $\sigma_c(D):=\inf\{\sigma\in\RR:\ D\textrm{ converges on } \CC_\sigma\}$. This has been settled in \cite{Baymonat}, using that $\sigma_{\mathcal H_2((\log n))}=1/2$ (easy by the Cauchy-Schwarz inequality) and that $T_\sigma(\sum_n a_n e^{-\lambda s})=\sum_n a_n e^{-\sigma \lambda_n}e^{-\lambda_n s}$ 
maps $\mathcal H_p((\log n))$ into $\mathcal H_q((\log n))$ for all $p,q\in[1,+\infty)$ and all $\sigma>0$. 
The argument is based on multiplicativity (namely on the fact that  the natural $\lambda$-Dirichlet group for $(\log n)$ is the infinite polytorus $\TT^\infty$) and on a hypercontractive estimate for the Poisson kernel acting on the Hardy spaces $H^p(\TT)$ of the disk.

We will show that there is no hope to get such a result for general frequencies $\lambda$ even if they satisfy (BC). For instance, if we will be able to prove that for all frequencies $\sigma_{\mathcal H_1(\lambda)}\leq 2\sigma_{\mathcal H_2(\lambda)}$, we will nevertheless point out that, even if we assume (BC), this is optimal and in particulat that we may have $\sigma_{\mathcal H_1(\lambda)}>\sigma_{\mathcal H_2(\lambda)}$. We will also exhibit  a sequence $\lambda$, which still satisfies (BC), such that $T_\sigma$ maps boundedly $\mathcal H_2(\lambda)$ into $\mathcal H_{2k}(\lambda)$ if and only if $\sigma\geq (k-1)/2k$. In particular, it seems very hard to compute $\sigma_{\mathcal H_p(\lambda)}$ in the general case and the behaviour of $\mathcal H_p(\lambda)$ as a space of holomorphic function seems more difficult to predict if we assume only growth and separation conditions on $\lambda$.

\subsection{Notations}
Throughout this work, we shall use the following notations. For $\lambda$ a frequency and $D\in\mathcal D(\lambda)$, the abscissa of absolute convergence of $D$ and the abscissa of uniform convergence of $D$ are defined by 
\begin{align*}
\sigma_a(D)&:=\inf\{\sigma\in\RR:\ D\textrm{ converges absolutely on }\CC_\sigma\}\\
\sigma_u(D)&:=\inf\{\sigma\in\RR:\ D\textrm{ converges uniformly on }\CC_\sigma\}.
\end{align*}
We set 
\begin{align*}
L(\lambda)&:=\limsup_{N\to+\infty}\frac{\log N}{\lambda_N}\\
&=\sup_{D\in\mathcal D(\lambda)} \sigma_a(D)-\sigma_c(D).
\end{align*}

Given $(G,\beta)$ a $\lambda$-Dirichlet group we shall denote by $\textrm{Pol}_\lambda(G)$ the set of polynomials with spectrum in $\lambda$, namely finite sums $\sum_{k=1}^n a_k h_{\lambda_k}$ with $\lambda_k\in G$ for each $k=1,\dots,n$. We shall also use the following result: for all $f:G\to\CC$ measurable, for almost all $\omega\in G$, the function $f_\omega:=f(\omega\beta(\cdot)):\RR\to\CC$ is measurable. If additionally $f\in L_\infty(G)$, then for almost all $\omega\in G$, $f_\omega\in L_\infty(\RR)$ with $\|f_\omega\|_\infty\leq \|f\|_\infty$. Moreover, if $f\in L_1(G)$, then $f_\omega$ is locally integrable for almost all $\omega\in G$, and for $g\in L_1(\RR)$, the convolution 
$$g\star f_\omega(t):=\int_{\RR} f(\omega\beta(t-y))g(y)dy$$
is almost everywhere defined on $\RR$ and measurable (see \cite[Lemma 3.11]{DS19}).

\section{Preliminaries}
\subsection{A new class of frequencies}\label{sec:nc}

We introduce our new condition, more general than (LC), under which most of our results will be satisfied. We first reformulate (LC).

\begin{lemma}
A frequency $\lambda$ satisfies (LC) if and only if there exists $C>0$ such that, for all $\delta>0$, for all $n\in\NN$, 
$$\log\left(\frac{\lambda_{n+1}+\lambda_n}{\lambda_{n+1}-\lambda_n}\right)\leq Ce^{\delta\lambda_n}.$$
\end{lemma}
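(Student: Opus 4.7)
The plan is to prove both implications by elementary manipulations based on the identity
\[\log\frac{\lambda_{n+1}+\lambda_n}{\lambda_{n+1}-\lambda_n}=\log\Bigl(1+\frac{2\lambda_n}{d_n}\Bigr),\qquad d_n:=\lambda_{n+1}-\lambda_n,\]
whose left-hand side is bounded by $\log 3$ when $d_n\geq\lambda_n$ and by $\log 3+\log\lambda_n+\log(1/d_n)$ when $d_n<\lambda_n$. Since (LC) provides precisely a control on $\log(1/d_n)$, both directions reduce to tracking constants and asymptotic dominance; one loses a factor of two in the parameter $\delta$ at each step, which is harmless since $\delta>0$ is arbitrary. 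I interpret the new condition with quantifiers $\forall\delta\ \exists C\ \forall n$, in parallel with (LC).

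For (LC) $\Rightarrow$ new: fix $\delta>0$ and apply (LC) with $\delta/2$ to obtain $C_1>0$ such that $d_n\geq C_1\exp(-\exp((\delta/2)\lambda_n))$, i.e.\ $\log(1/d_n)\leq e^{(\delta/2)\lambda_n}+\log(1/C_1)$. Substituting into the estimate above gives
\[\log\frac{\lambda_{n+1}+\lambda_n}{d_n}\leq\log 3+\log\lambda_n+e^{(\delta/2)\lambda_n}+\log(1/C_1).\]
Because $\log\lambda_n=o(e^{\delta\lambda_n})$ and $e^{(\delta/2)\lambda_n}=o(e^{\delta\lambda_n})$ as $\lambda_n\to\infty$, the right-hand side is bounded by $Ce^{\delta\lambda_n}$ once $\lambda_n$ is large; the finitely many remaining indices are absorbed by enlarging $C$, using $e^{\delta\lambda_n}\geq 1$.

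For new $\Rightarrow$ (LC): fix $\delta>0$ and apply the new condition with parameter $\delta/2$ to obtain $C_1>0$ with
\[d_n\geq(\lambda_{n+1}+\lambda_n)\exp\bigl(-C_1e^{(\delta/2)\lambda_n}\bigr).\]
As soon as $\lambda_n\geq(2/\delta)\log\max(C_1,1)$ one has $C_1e^{(\delta/2)\lambda_n}\leq e^{\delta\lambda_n}$, and combined with $\lambda_{n+1}+\lambda_n\geq 1$ for $n$ large this yields $d_n\geq e^{-e^{\delta\lambda_n}}$, which is (LC) up to a constant handling the finitely many small indices.

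The argument is essentially bookkeeping, so there is no serious obstacle. The only mildly subtle point is the quantifier order, which one must read as $\forall\delta\ \exists C$ (a literal reading $\exists C\ \forall\delta$ would, by letting $\delta\to 0^+$, force $\log((\lambda_{n+1}+\lambda_n)/(\lambda_{n+1}-\lambda_n))$ to be uniformly bounded in $n$, which is strictly stronger than (LC), as $\lambda_n=\log n$ already shows).
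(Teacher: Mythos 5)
Your proof is correct and follows essentially the same route as the paper: both directions reduce to bounding $\frac{\lambda_{n+1}+\lambda_n}{\lambda_{n+1}-\lambda_n}$ by (a constant times) $\lambda_n/(\lambda_{n+1}-\lambda_n)$ and then invoking the (LC) lower bound on the gap, the paper carrying out the forward direction via monotonicity of $x\mapsto(x+\lambda_n)/(x-\lambda_n)$ and leaving the converse to the reader, while you do the same bookkeeping through the identity $\log\bigl(1+2\lambda_n/(\lambda_{n+1}-\lambda_n)\bigr)$ and also write out the converse. Your reading of the quantifiers as $\forall\delta\ \exists C$ is indeed the intended one: the paper's own estimate ends with a constant $C'$ that depends on $\delta$, and this matches the quantifier pattern in the definition of (NC), whereas the literal $\exists C\ \forall\delta$ reading would, as you note, force the ratio to be bounded and fail already for $\lambda_n=\log n$.
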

\begin{proof}
Assume first that $\lambda$ satisfies (LC) and let $\delta>0$. Then there exists $C>0$ such that, for all $n\in\NN$, $\lambda_{n+1}-\lambda_n \geq Ce^{-e^{{\frac\delta 2}\lambda_n}}$. Let $n\in\NN$ and set $\xi_n=\lambda_n+Ce^{-e^{\frac\delta2\lambda_n}}$. Since the function $x\mapsto (x+\lambda_n)/(x-\lambda_n)$ is decreasing on $(\lambda_n,+\infty)$, one gets
\begin{align*}
\frac{\lambda_{n+1}+\lambda_n}{\lambda_{n+1}-\lambda_n}&\leq \frac{\xi_n+\lambda_n}{\xi_n-\lambda_n}\\
&\leq C^{-1}\left(2\lambda_n+Ce^{-e^{\frac\delta 2\lambda_n}}\right)e^{e^{\frac\delta 2\lambda_n}}\\
&\leq C' e^{e^{\delta\lambda_n}}.
\end{align*}
The converse implication is easier and left to the reader.
\end{proof}
The main idea to introduce (NC) is to allow to compare the position of $\lambda_n$ with $\lambda_m$ for some $m>n$ and not only with $\lambda_{n+1}$. 
\begin{definition}\label{def:NC}
We say that a frequency $\lambda$ satisfies (NC) if, for all $\delta>0$, there exists $C>0$ such that, for all $n\geq 1$, there exists $m>n$ such that
\begin{equation}\tag{NC}
\log\left(\frac{\lambda_{m}+\lambda_n}{\lambda_{m}-\lambda_n}\right)+(m-n)\leq Ce^{\delta\lambda_n}.
\end{equation}
\end{definition}
Condition (NC) provides a nontrivial extension of (LC).
\begin{example}\label{ex:sequenceNC}
Let $\lambda$ be defined by $\lambda_{2^n+k}=n^2+ke^{-e^{n^2}}$ for $k=0,\dots,2^n-1$. Then $L(\lambda)=+\infty$, $\lambda$ satisfies (NC) and $\lambda$ is not the finite union of frequencies satisfying (LC).
\end{example}
\begin{proof}
Let $\delta>0$, $n\in\mathbb N$, $k\in\{0,\dots,2^n-1\}$, then provided $n$ is large enough
\begin{align*}
\log\left(\frac{\lambda_{2^{n+1}}+\lambda_{2^n+k}}{\lambda_{2^{n+1}}-\lambda_{2^n+k}}\right)+(2^{n+1}-2^n-k)&\leq \log\big(2(n+1)^2\big)+2^n\\
&\leq C e^{\delta n^2}\\
&\leq Ce^{\delta\lambda_{2^n+k}}
\end{align*}
for some $C>0$. Moreover, if $\lambda$ was the finite union of $\lambda^1,\dots,\lambda^p$, each $\lambda^j$ satisfying (LC), then at least one of the $\lambda^j$, say $\lambda^1$, will contain an infinite number of consecutive terms $\lambda_m^1=\lambda_{2^n+k}$, $\lambda_{m+1}^1=\lambda_{2^n+k'}$ with $1\leq k'-k\leq p$ and $k,k'\in \{0,\dots,2^n-1\}$. For these $m$,
$$\log\left(\frac{\lambda_{m+1}^1+\lambda_m^1}{\lambda_{m+1}^1-\lambda_m^1}\right)\geq e^{n^2}-\log p\geq Ce^{\lambda_m^1/2}$$
contradicting that $\lambda^1$ satisfies (LC).
\end{proof}

\subsection{Saksman's vertical convolution formula}
Saksman's vertical convolution formula was introduced to express weighted sums of ordinary Dirichlet series using an integral. It says essentially that if $D=\sum_n a_n n^{-s}$ is an ordinary Dirichlet series and $\psi$ is in $L^1$ with $\hat \psi$ compactly supported, then 
$$\sum_{n=1}^{+\infty}a_n \hat\psi(\log n)n^{-s}=\int_{\RR}D(s+it)\psi(t)dt$$
with a sense that has to be made precise. It was used in \cite{BQS16} for Dirichlet series in $\mathcal H_1$ and in \cite{Qu15} for Dirichlet series in $\mathcal H_\infty$. We shall extend it to general Dirichlet series and we will use it as a much more flexible substitute of Perron's formula.

\begin{theorem}\label{thm:saksman}
Let $\psi\in L^1(\RR)$ be such that $\hat\psi$ is compactly supported and let $\lambda$ be a frequency with $(G,\beta)$ an associated $\lambda$-Dirichlet group.
\begin{enumerate}[(a)]
\item Let $D=\sum_n a_n e^{-\lambda_n s}\in\dinfext$ with bounded and holomorphic extension to $\CC_0$ denoted by $f$. Then for all $s\in\CC$ with $\Re e (s)>0$
$$\sum_{n=1}^{+\infty}a_n\hat\psi(\lambda_n)e^{-\lambda_n s}=\int_{\RR}f(s+it)\psi(t)dt.$$
\item Let $f=\sum_n a_n h_{\lambda_n}\in H_1^\lambda(G)$. Then for almost all $\omega\in G$, 
$$\sum_{n=1}^{+\infty}a_n \hat\psi(\lambda_n)h_{\lambda_n}(\omega)=\int_{\RR}f_\omega(t)\psi(t)dt.$$
\end{enumerate}
\end{theorem}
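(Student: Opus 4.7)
My plan for part (a) hinges on the observation that since $\hat\psi$ is compactly supported in some interval $[-M,M]$, the left-hand side reduces to the finite sum $G(s) := \sum_{\lambda_n \leq M} a_n \hat\psi(\lambda_n) e^{-\lambda_n s}$, a $\lambda$-Dirichlet polynomial and hence entire. The right-hand side $F(s) := \int_\RR f(s+it)\psi(t)dt$ is well-defined and holomorphic on $\CC_0$ because $f$ is bounded there and $\psi \in L^1(\RR)$, so by analytic continuation it suffices to check $F \equiv G$ on some open subset of $\CC_0$. Introducing the partial sum $D_M(s) := \sum_{\lambda_n \leq M} a_n e^{-\lambda_n s}$, a straightforward finite computation gives $\int_\RR D_M(s+it)\psi(t)dt = G(s)$, so setting $g := f - D_M \in H^\infty(\CC_0)$ the problem reduces to showing
\begin{equation*}
\int_\RR g(s+it)\psi(t)dt = 0\quad\text{for all }\Re e(s)>0.
\end{equation*}
The heart of the matter is that $g$ has ``spectrum'' carried by $\{\lambda_n : \lambda_n > M\}$ while $\psi$ is band-limited to $[-M,M]$. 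To make this precise I would invoke the Poisson representation $g(\sigma+i\tau) = (P_\sigma \star g^*)(\tau)$ for the boundary trace $g^* \in L^\infty(\RR)$, whence Fubini turns the integral into $\int_\RR g^*(u)\phi_\sigma(\tau-u)du$ with $\phi_\sigma := P_\sigma \star \psi \in L^1 \cap L^2$ whose Fourier transform $\widehat{\phi_\sigma}(\xi) = e^{-\sigma|\xi|}\hat\psi(\xi)$ is still supported in $[-M,M]$. The spectral disjointness of $g^*$ and $\phi_\sigma$ then yields the vanishing; to reduce to a bona fide Plancherel pairing I would approximate $g$ by a suitable summability mean of its Dirichlet series (Abel, Riesz, or Ces\`aro) and pass to the limit by bounded convergence.

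For part (b), I would verify the identity first for a $\lambda$-Dirichlet polynomial $P = \sum_k a_k h_{\lambda_k} \in \pollambda$: using $h_{\lambda_k}\circ\beta(t) = e^{-i\lambda_k t}$, a direct finite computation gives $\int_\RR P_\omega(t)\psi(t)dt = \sum_k a_k h_{\lambda_k}(\omega)\hat\psi(\lambda_k)$ for every $\omega \in G$. For general $f \in H_1^\lambda(G)$, approximate by polynomials $P_n \to f$ in $L^1(G)$. The linear operator $I:L^1(G) \to L^1(G)$ defined by $I(h)(\omega) := \int_\RR h_\omega(t)\psi(t)dt$ is bounded with $\|I(h)\|_1 \leq \|\psi\|_1 \|h\|_1$, by Fubini and translation invariance of the Haar measure on $G$ (noting that $\int_G |h(\omega\beta(t))|d\omega = \|h\|_1$). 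Hence $I(P_n) \to I(f)$ in $L^1(G)$ and, along a subsequence, a.e.\ on $G$. On the other side, the compact support of $\hat\psi$ keeps the sum finite, and $\hat{P_n}(h_{\lambda_k}) \to \hat f(h_{\lambda_k})$ for each relevant $k$ by $L^1$-convergence; passing to the limit on both sides delivers the identity for a.e.\ $\omega\in G$.

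The principal obstacle is making the spectral-disjointness step in part (a) rigorous when $L(\lambda) = +\infty$ (cf.\ Example \ref{ex:sequenceNC}), because then the Dirichlet series of $g$ is nowhere absolutely convergent and one cannot simply exchange sum and integral on a half-plane of absolute convergence. Overcoming this requires a summability method adapted to the $H^\infty(\CC_0)$ framework rather than to absolute convergence of the series, combined with an $H^\infty$-to-$L^\infty$ boundary analysis through the Poisson kernel.
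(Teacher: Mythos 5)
Your part (b) is correct and follows essentially the same route as the paper: verify the identity for polynomials, observe via Fubini and invariance of the Haar measure that $f\mapsto\bigl[\omega\mapsto\int_{\RR}f_\omega(t)\psi(t)dt\bigr]$ is bounded on $L^1$, approximate in $H_1^\lambda(G)$ by polynomials, and pass to an a.e.\ convergent subsequence while the left-hand side is a finite sum whose coefficients converge.

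Part (a), however, has a genuine gap exactly at the point you yourself flag. Writing $g=f-D_M$ and aiming at $\int_{\RR}g(s+it)\psi(t)dt=0$ via Poisson representation and ``spectral disjointness'' presupposes that the Beurling spectrum of the boundary trace $g^*$ (equivalently of $g_\sigma$ for small $\sigma>0$) is contained in $\{\lambda_n:\lambda_n>M\}$. But the only place where you have access to the Dirichlet expansion of $g$ is a possibly very remote half-plane of convergence (when $L(\lambda)=+\infty$ the series need not converge, even conditionally, anywhere near $\CC_0$), and spectra of the vertical sections $g_\sigma=P_{\sigma-\sigma'}\star g_{\sigma'}$ can only be transported from left to right, not from the region of convergence back towards the boundary. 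So the spectral inclusion you need is not a soft fact; it is essentially equivalent to the statement being proved. Your proposed remedy --- approximating $g$ by Abel/Riesz/Ces\`aro means of its Dirichlet series ``and passing to the limit by bounded convergence'' --- is precisely where the difficulty sits: bounded (or uniform on $\CC_\veps$) convergence of such summability means for an arbitrary element of $\dinfext$ is a nontrivial theorem of Hardy--Riesz/Bohr type, not something available for free; invoked without proof or reference, the argument is circular. The paper closes exactly this hole by quoting \cite[Theorem 41]{HarRi} or \cite[Theorem 22]{Hel05}: there exist $\lambda$-Dirichlet polynomials $D_j$ whose coefficients converge to those of $D$ and which converge uniformly to $f$ on some half-plane $\CC_\theta$; then the identity for polynomials passes to the limit on $\CC_\theta$ (the left side being a finite sum since $\hat\psi$ has compact support) and extends to $\CC_0$ by analytic continuation. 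Alternatively, your spectral route could be made rigorous --- using that $\widehat{P_\sigma}$ never vanishes, so $\mathrm{sp}(g_\sigma)=\mathrm{sp}(g^*)$ for all $\sigma>0$, computing the distributional Fourier transform of $g_{\sigma'}$ in the half-plane of convergence with the Abel-summation bound $|S_N(\sigma'+it)|\leq C(1+|t|)$, and invoking the theorem that $h\star\phi=0$ when $\hat\phi$ vanishes on a neighbourhood of $\mathrm{sp}(h)$ --- but none of these ingredients appears in your sketch, and as written the key step is unproved. (The easy case $L(\lambda)<+\infty$, where one can work in the half-plane of absolute convergence and continue analytically, is not affected.)
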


In the sequel, for $D=\sum_n a_n e^{-\lambda_n s}\in\mathcal \dinfext$, respectively for $f=\sum_n a_n h_{\lambda_n}$ in $H_1^\lambda(G)$, and for $\psi\in L^1(\RR)$ compactly supported, we shall denote
\begin{align*}
R_\psi(D)&:=\sum_n a_n \hat{\psi}(\lambda_n)e^{-\lambda_n s}\\
R_\psi(f)&:=\sum_n a_n \hat{\psi}(\lambda_n) h_{\lambda_n}.
\end{align*}

\begin{proof}
(a) Observe first that the equality is true provided $D$ is a Dirichlet polynomial and that the two members of the equality define an analytic function on $\CC_0$. Assume first that $L(\lambda)<+\infty$. Then $\sigma_a(D)<+\infty$ and for $s>\sigma_a(D)$, the formula is true just by exchanging the sum and the integral. We conclude by analytic continuation. 

When $L(\lambda)=+\infty$, the proof is more difficult. We use (see \cite[Theorem 41 p. 53]{HarRi} or \cite[Theorem 22]{Hel05}) that there exist a half-plane $\CC_\theta$, $\theta>0$, and a sequence of $\lambda$-Dirichlet polynomials $D_j=\sum_{n=1}^{+\infty} a_n^j e^{-\lambda_n s}$ such that $(a_n^j)$ tends to $a_n$ as $j$ tends to $+\infty$ for any $n$ and $(D_j)$ converges uniformly to $f$ on $\CC_\theta$. Since each $D_j$ is a Dirichlet polynomial, we know that for all $s\in\CC_\theta$ and all $j\in\NN$, 
$$\sum_{n=1}^{+\infty}a_n^j \hat\psi(\lambda_n)e^{-\lambda_n s}=\int_{\RR}D_j(s+it)\psi(t)dt.$$
Letting $j$ to $+\infty$ in the previous inequality for a fixed $s\in\CC_\theta$, since the sum on the left handside is finite ($\hat\psi$ has compact support), and by uniform convergence, we get the result on $\CC_\theta$. We conclude again by
analytic continuation.\\
(b) When $f\in \pollambda$, the equality follows immediately by interverting a finite sum and an integral, and the definition of the objects that come into play:
\begin{align*}
R_\psi(f)(\omega)&=\int_{\RR}\sum_n   a_n e^{-it\lambda_n}\psi(t)h_{\lambda_n}(\omega) dt\\
&=\int_{\RR}\sum_n a_n h_{\lambda_n}(\beta(t))h_{\lambda_n}(\omega)\psi(t)dt\\
&=\int_{\RR}f_\omega(t)\psi(t)dt
\end{align*}
(here the equality is valid for all $\omega\in G$).
Let now $f\in H_1^\lambda(G)$. Then 
$$\int_G\int_\RR |f_\omega(t)\psi(t)|dt \leq \|f\|_1\|\psi\|_1.$$
Therefore, for almost all $\omega\in G$, the function $t\mapsto f_\omega(t)\psi(t)$ belongs to $L_1(\RR)$
and the operator $S_\psi:H_1^\lambda(G)\to L_1(G,L_1(\RR))$, $f\mapsto [\omega\mapsto f_\omega(\cdot)\psi(\cdot)]$ is continuous. If $(f_n)$ is a sequence in $\pollambda$ tending to $f\in H_1^{\lambda}(G)$, then there exists a sequence $(n_k)$ such that, for a.e. $\omega\in G$, 
\begin{align*}
S_{\psi}(f_{n_k})(\omega)&\to S_\psi(f)(\omega)\textrm{ in }L_1(\RR)\\
R_\psi(f_{n_k})(\omega)&\to R_\psi(f)(\omega)
\end{align*}
(recall that the sum defining $R_\psi$ is finite). Since $R_\psi(f_{n_k})(\omega)=\int_\RR S_\psi(f_{n_k})(\omega)$ for all $k$ and all $\omega\in G$, we get the conclusion by taking the limit.
\end{proof}

\begin{remark}
The statement of Theorem \ref{thm:saksman} remains true provided $\psi$ is not compactly supported but still satisfies $\sum_n |\hat\psi(\lambda_n)|<+\infty$.
\end{remark}
\begin{remark}
To obtain Theorem \ref{thm:saksman}, in both cases, we use the density of polynomials for a suitable topology. In $H_1^\lambda(G)$, this is trivial which is not the case in $\mathcal D_{\rm ext}^{\infty}(\lambda)$. More specifically
we intend to use Theorem \ref{thm:saksman} to obtain results that do not seem easily reachable using Riesz means. Therefore
it is intesting to obtain a proof of Theorem \ref{thm:saksman} that do not use Riesz means. This is the case if we use \cite[Theorem 22]{Hel05}. We thank A. Defant and I. Schoolmann for pointing out to me this reference.
%but the proof of Theorem 41 of \cite{HarRi} is rather involved! More specifically, we shall use Theorem \ref{thm:saksman} to obtain results that do not seem easily reachable using Riesz means. But our proof depends on  Theorem 41 of \cite{HarRi} (see also \cite[Proposition 3.4]{Schoolmann20}) which is itself based on Riesz means. Observe that we only use a part of \cite[Proposition 3.4]{Schoolmann20} where it is proved that we may find a sequence of polynomials $(D_j)$ that converges to $D$ uniformly on \emph{each} $\mathbb C_\theta$, $\theta>0$, whereas we only need that there is convergence on some $\CC_\theta$, $\theta>0$. This motivates the following question.
\end{remark}
%\begin{question}
%Does there exist an easy proof that, for all $D\in\dinfext$, there exists $\theta>0$ and a sequence of $\lambda$-Dirichlet polynomials $(D_j)$ such that $D_j$ converges uniformly to $D$ on $\CC_\theta$?
%\end{question}
\begin{remark}
Part (b) of the vertical convolution formula is more precised than the statement established and used in \cite{BQS16}. The equivalent statement in this context would be that, for all $f\in H_1^\lambda(G)$, 
$$\sum_{n=1}^{+\infty}a_n \hat\psi(n)h_{\lambda_n}=\int_{\RR}T_t f \psi(t)dt,$$
where $T_t:H^\lambda_1(G)\to H^\lambda_1(G),\ f\mapsto f(\beta(t)\cdot)$ is an onto isometry of $H_1^\lambda(G)$ and the right handside denotes a vector-valued integral in $H_1^\lambda(G)$. We will need a pointwise statement in order to obtain maximal inequalities.
\end{remark}	
\subsection{Riesz means and Saksman's vertical convolution formula}
We now show how the results on $(\lambda,k)$-Riesz means will follow from our results coming from Saksman's vertical convolution formula. This is a consequence of the following easy proposition.
\begin{proposition}
Let $\alpha>0$. Then there exists an $L^1(\mathbb R)$-function $\psi$ such that, for all $t\in\RR$,
$\hat{\psi}(x)=(1-|x|)^\alpha$ provided $|x|<1$, $\hat{\psi}(x)=0$ otherwise.
\end{proposition}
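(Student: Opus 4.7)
The plan is to define $\psi$ as the inverse Fourier transform of the compactly supported target $g(x) := (1-|x|)^\alpha \chi_{[-1,1]}(x)$, namely
$$\psi(t) := \frac{1}{2\pi}\int_{-1}^{1}(1-|x|)^\alpha e^{ixt}\,dx,$$
which is automatically continuous, bounded, real and even on $\RR$. Once $\psi \in L^1(\RR)$ is established, Fourier inversion (applied to the $L^1 \cap L^2$ function $g$, which is moreover continuous on $\RR$ because $\alpha > 0$) yields $\hat\psi = g$ everywhere. The entire content of the proof is therefore the decay estimate for $\psi$ at infinity.

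By parity, fix $t \geq 1$ and set $F(t) := \int_0^1 (1-x)^\alpha e^{ixt}\,dx$, so that $\psi(t) = \frac{1}{\pi}\operatorname{Re}F(t)$. A single integration by parts (the boundary contribution at $x = 1$ vanishing since $\alpha > 0$) gives
$$F(t) = -\frac{1}{it} + \frac{\alpha}{it}\int_0^1 (1-x)^{\alpha-1}e^{ixt}\,dx.$$
The first term is purely imaginary and hence disappears from $\operatorname{Re}F(t)$; this cancellation is crucial, since the naive bound $|F(t)| = O(1/t)$ would not be integrable. For the remaining integral, the substitutions $u = 1-x$ and then $v = tu$ produce
$$\int_0^1 (1-x)^{\alpha-1}e^{ixt}\,dx = \frac{e^{it}}{t^\alpha}\int_0^t v^{\alpha-1}e^{-iv}\,dv,$$
and when $0 < \alpha \leq 1$ the incomplete-gamma-type integral is uniformly bounded in $t \geq 1$ (its limit at infinity is $e^{-i\pi\alpha/2}\Gamma(\alpha)$ by rotating the contour to the negative imaginary axis). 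This yields $|\psi(t)| \leq C/t^{1+\alpha}$. For $\alpha > 1$, iterating the integration by parts picks up a genuine leading term $\alpha/(\pi t^2)$ at the second step and strictly faster-decaying remainders, whence $|\psi(t)| \leq C/t^{2}$. In both regimes $\psi \in L^1(\RR)$.

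The main technical point is precisely this cancellation of the purely-imaginary $O(1/t)$ contribution: a straight modulus bound on $F(t)$ falls short, so one must work with $\operatorname{Re}F(t)$ directly, either via the integration-by-parts identity above or, equivalently, via the trigonometric identity that combines the contributions from the two halves $[-1,0]$ and $[0,1]$. Everything else---the regularity of $\psi$ and the identification of $\hat\psi$ with $g$---is routine once integrability of $\psi$ is in hand.
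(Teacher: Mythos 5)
Your argument is correct, but it is a genuinely different proof from the one in the paper. You estimate $\psi$ (the inverse transform of $g(x)=(1-|x|)^\alpha\mathbf 1_{[-1,1]}(x)$) pointwise: one integration by parts, the observation that the non-integrable $O(1/t)$ boundary term is purely imaginary and therefore drops out of $\operatorname{Re}F(t)$, and then a bound on the incomplete-gamma-type integral (for $0<\alpha\leq 1$) or a second integration by parts (for $\alpha>1$), giving $|\psi(t)|\leq C t^{-1-\min(\alpha,1)}$ and hence $\psi\in L^1(\RR)$. The paper instead argues softly: writing $u=g$, one has $\widehat u(t)=\frac{1}{it}\widehat{u'}(t)$ with $u'=\mp\alpha(1-|x|)^{\alpha-1}\mathbf 1_{[-1,1]}\in L^1\cap L^{1+\veps}$ for small $\veps>0$, so $\widehat{u'}\in L^q$ for some finite $q$ by Hausdorff--Young, and H\"older (using boundedness of $\widehat u$ near $0$ and $1/t\in L^{q'}$ away from $0$) gives $\widehat u\in L^1$; Fourier inversion then concludes. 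The paper's route is shorter, needs no case distinction in $\alpha$ and no cancellation argument, because it replaces pointwise decay by an $L^q$-average bound; your route is more hands-on and yields an explicit decay rate, which is actually the kind of information the paper has to re-derive later (after Theorem \ref{thm:maxRiesz}, where a bound $|\psi(x)|\leq C/x^{1+\delta}$ for this very kernel is needed), so your quantitative estimate is a genuine bonus. Two cosmetic points: for $\alpha=1$ the integral $\int_0^t v^{\alpha-1}e^{-iv}\,dv$ is bounded but has no limit as $t\to+\infty$ (boundedness is all you use, so no harm), and for $\alpha\geq 2$ the second-step remainder is only $O(1/t^2)$ rather than strictly faster decaying, which still suffices for $|\psi(t)|\leq C/t^2$ and integrability.
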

\begin{proof}
Define $u(x)=(1-|x|)^\alpha \mathbf 1_{[-1,1]}(x)$. Then $u$ is piecewise $\mathcal C^1$, its derivative $u'(x)=\pm\alpha(1-|x|)^{\alpha-1}\mathbf 1_{[-1,1]}(x)$ belongs to $L^1(\mathbb R)$ and thus we know that, for all $t\neq 0$, 
$\hat u(t)=\frac{1}{it}\widehat{u'}(t).$
Now, it is easy to see that $u'$ belongs to $L^{1+\veps}(\RR)\cap L^1(\RR)$ for some $\veps>0$. Hence, $\widehat{u'}$ belongs to $L^q(\RR)$ for some $q<+\infty$. In particular, by H\"older's inequality, $\hat u$ belongs to $L^1$, so that we may apply the inverse Fourier transform to get the statement.
\end{proof}

In the sequel, for $\lambda$ a frequency, $\alpha>0$, $(G,\beta)$ a $\lambda$-Dirichlet group and $N>0$, we shall use the following notations:
\begin{align*}
R_N^{\lambda,\alpha}(f)&=\sum_{\lambda_n\leq N} \widehat f(h_{\lambda_n})\left(1-\frac{\lambda_n}N\right)^\alpha h_{\lambda_n}\\
R_N^{\lambda,\alpha}(D)&=\sum_{\lambda_n\leq N} a_n\left(1-\frac{\lambda_n}N\right)^\alpha e^{-\lambda_n s}
\end{align*}
where $f\in H_1^\lambda(G)$ and $D=\sum_n a_n e^{-\lambda_n s}\in\mathcal D(\lambda)$.  Many of the results of \cite{Schoolmann20,DSHelson,DSRiesz} are based on a detailed study of these operators $R_N^{\lambda,\alpha}$. We shall extend them via the convolution formula to other operators $R_\psi$, allowing better results with a different choice of $\psi$.

%%%%%%%%%%%%%%%%%
%%%  BOHR %%%%%%%%%%%
%%%%%%%%%%%%%%%%%%

\section{Bohr's theorem under (NC)}

\subsection{The case of $\dinfext$}

In his study of Bohr's theorem \cite{Schoolmann20}, I. Schoolmann used that, for all $D=\sum_n a_ne^{-\lambda_n s}\in\dinfext$ with extension $f$, the sequence of its Riesz means of order $k$
$$R_x^k(D)=\sum_{\lambda_n<x} a_n\left(1-\frac{\lambda_n}x\right)^k e^{-\lambda_n s}$$
converge uniformly to $f$ on each halfplane $\CC_\veps$, for all $\veps>0$, as $x\to+\infty$. We now show that we may replace the function $\psi$ such that $\hat \psi(t)=(1-|t|)^k\mathbf 1_{[-1,1]}(t)$ by any function $L^1$-function $\psi$ such that $\hat\psi$ has compact support. 

\begin{lemma}\label{lem:Bohr1}
Let $\lambda$ be a frequency, let $\psi\in L^1(\RR)$ be such that $\hat \psi$ has compact support and let $D=\sum_n a_n e^{-\lambda_n s}\in\dinfext$ with extension $f$. Then 
$$\left\|R_{\psi}(D)\right\|_\infty\leq \|\psi\|_1 \|f\|_\infty.$$
Moreover, if $\int_{\RR}\psi=1$,denoting by $\psi_N(\cdot)=N\psi(N\cdot)$, the sequence of $\lambda$-Dirichlet polynomials 
$(R_{\psi_N}(D))$ converges uniformly to $f$ on each half-plane $\CC_\veps$, for all $\veps>0$. 
\end{lemma}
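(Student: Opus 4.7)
The plan is to derive both assertions from Theorem \ref{thm:saksman}(a). Since $\hat\psi$ has compact support in some interval $[-M, M]$ and $\lambda_n \to +\infty$, the series $R_\psi(D) = \sum_n a_n \hat\psi(\lambda_n) e^{-\lambda_n s}$ reduces to a finite sum, so $R_\psi(D)$ is indeed a $\lambda$-Dirichlet polynomial. For the norm inequality, I would apply Saksman's formula to write, for every $s \in \CC_0$,
$$R_\psi(D)(s) = \int_\RR f(s+it)\psi(t)\,dt,$$
and since $s+it \in \CC_0$ I bound $|f(s+it)| \leq \|f\|_\infty$, which yields $|R_\psi(D)(s)| \leq \|\psi\|_1\|f\|_\infty$; taking the supremum over $s \in \CC_0$ gives $\|R_\psi(D)\|_\infty \leq \|\psi\|_1\|f\|_\infty$.

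For the uniform convergence of $R_{\psi_N}(D)$ to $f$ on $\CC_\veps$, I would use $\int_\RR \psi_N = \int_\RR \psi = 1$ together with Saksman's formula applied to $\psi_N$ to write
$$R_{\psi_N}(D)(s) - f(s) = \int_\RR [f(s+it) - f(s)]\psi_N(t)\,dt = \int_\RR [f(s+it/N) - f(s)]\psi(t)\,dt$$
after the change of variables $t \mapsto t/N$. The natural splitting is at $|t| = A$. The tail contributes at most $2\|f\|_\infty \int_{|t|>A}|\psi(t)|\,dt$, which is uniformly small in $s$ once $A$ is chosen large, by integrability of $\psi$. The head is handled by Cauchy's integral formula: for $s \in \CC_\veps$, the disc of radius $\veps/2$ around $s$ lies in $\CC_{\veps/2}$, so $|f'(z)| \leq 2\|f\|_\infty/\veps$ throughout the segment from $s$ to $s + it/N$, whence $|f(s+it/N) - f(s)| \leq 2\|f\|_\infty |t|/(N\veps)$; integrating against $|\psi|$ yields a term of order $\|f\|_\infty A\|\psi\|_1/(N\veps) \to 0$ uniformly in $s \in \CC_\veps$ as $N \to +\infty$.

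No serious obstacle is anticipated. Both assertions are essentially routine consequences of Saksman's convolution formula once one recognizes that $R_\psi(D)$ is a polynomial; the only slightly delicate points are pulling $\|f\|_\infty$ out of the integral (which uses that vertical translates of points in $\CC_0$ remain in $\CC_0$) and the Cauchy bound on $f'$, both standard for bounded holomorphic functions in a half-plane.
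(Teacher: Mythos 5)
Your proof is correct, and it follows the same skeleton as the paper's: the norm inequality is read off directly from Theorem \ref{thm:saksman}(a) (both you and the paper bound $|f(s+it)|\leq\|f\|_\infty$ under the integral, using that $R_\psi(D)$ is a finite sum), and the convergence statement is a mollifier argument applied to $s\mapsto\int_\RR f(s+it)\psi_N(t)\,dt$. The one genuine difference is how the required regularity of $f$ on $\CC_\veps$ is obtained. The paper invokes uniform continuity of $f$ on $\CC_\veps$, deduced from the nontrivial fact that $f$ is there a uniform limit of $\lambda$-Dirichlet polynomials (the Hardy--Riesz/Helson approximation theorem, the same input used in the proof of Theorem \ref{thm:saksman}(a)), and then appeals to standard mollifier results. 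You instead prove a quantitative Lipschitz estimate $|f(s+it/N)-f(s)|\leq 2\|f\|_\infty|t|/(N\veps)$ via Cauchy's estimate on discs of radius $\veps/2$, which only uses that $f$ is bounded and holomorphic on $\CC_0$; combined with the tail estimate $2\|f\|_\infty\int_{|t|>A}|\psi|$ this gives the uniform convergence with an explicit rate and without re-invoking the approximation theorem. This is a slightly more elementary and self-contained route for this step (the paper's phrasing buys generality of the mollifier argument at the cost of citing the approximation result a second time). Two small remarks: the Cauchy estimate should be applied at each point $z$ of the vertical segment from $s$ to $s+it/N$ rather than on a single disc around $s$ --- harmless here since all such $z$ have the same real part $>\veps$; and when applying Theorem \ref{thm:saksman}(a) to $\psi_N$ one should note $\widehat{\psi_N}(\xi)=\hat\psi(\xi/N)$ is still compactly supported, which you implicitly use.
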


\begin{proof}
The inequality follows immediately from Theorem \ref{thm:saksman}. The statement on uniform convergence follows as well from this formula and from standard results on mollifiers, provided we know that $f$ is uniformly continuous on $\CC_\veps$. Again, this can be deduced from the fact that on this half-plane, $f$ is the uniform limit of $\lambda$-Dirichlet polynomials, which are themselves uniformly continuous.
\end{proof}  

We shall now apply this to a suitable choice of $\psi$ in order to get good estimates of the norm of the projection $S_N$.

\begin{theorem}\label{thm:BohrNC}
Let $\lambda$ be a frequency. There exists $C>0$ such that, for all $M>N\geq 1$, 
$$\|S_N\|_{\dinfext\to\dinf}\leq C\left(\log\left(\frac{\lambda_M+\lambda_N}{\lambda_M-\lambda_N}\right)+(M-N-1)\right).$$
\end{theorem}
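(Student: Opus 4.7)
The plan is to apply Saksman's convolution formula, through Lemma \ref{lem:Bohr1}, with a mollifier $\psi$ specifically chosen so that $\hat\psi$ acts as the identity on the first $N$ frequencies and vanishes from the $M$-th frequency onwards. Concretely, I would take $\hat\psi$ to be the trapezoidal plateau function equal to $1$ on $[-\lambda_N,\lambda_N]$, vanishing outside $[-\lambda_M,\lambda_M]$, and affine on the transition. Setting $a=(\lambda_M+\lambda_N)/2$ and $\epsilon=(\lambda_M-\lambda_N)/2$, this is exactly the normalized convolution
\[
\hat\psi=\frac{1}{2\epsilon}\bigl(\mathbf 1_{[-a,a]}\star\mathbf 1_{[-\epsilon,\epsilon]}\bigr),
\]
so $\psi$ is, up to universal constants, the product $\sin(at)\sin(\epsilon t)/(\epsilon t^2)$.

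The $L^1$-norm of $\psi$ is controlled by splitting the integral into the three natural regions $|t|\leq 1/a$, $1/a\leq |t|\leq 1/\epsilon$, $|t|\geq 1/\epsilon$ and using $|\sin u|\leq\min(|u|,1)$ in each. The first and third regions each contribute $O(1)$ while the middle region gives $O(\log(a/\epsilon))$, yielding $\|\psi\|_1\leq C+C\log\bigl((\lambda_M+\lambda_N)/(\lambda_M-\lambda_N)\bigr)$. Lemma \ref{lem:Bohr1} then provides $\|R_\psi(D)\|_\infty\leq \|\psi\|_1\|f\|_\infty$. By construction $\hat\psi(\lambda_n)=1$ for $n\leq N$ and $\hat\psi(\lambda_n)=0$ for $n\geq M$, so
\[
S_N(D)=R_\psi(D)-\sum_{n=N+1}^{M-1}a_n\hat\psi(\lambda_n)e^{-\lambda_n s}.
\]

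It remains to bound the middle finite sum on $\CC_0$. Since $|\hat\psi(\lambda_n)|\leq 1$ and $|e^{-\lambda_n s}|\leq 1$, only an individual pointwise coefficient bound $|a_n|\leq \|f\|_\infty$ is needed, after which the sum is dominated by $(M-N-1)\|f\|_\infty$. Such a bound is standard and follows from the isometric identification of $\dinfext$ with $H_\infty^\lambda(G)$ \cite{DS19}: under the Bohr lift, $a_n$ is the character value $\hat F(h_{\lambda_n})$ of a function $F\in L_\infty(G)$ of the same norm as $f$, and characters on $G$ are unimodular. Combining the three estimates yields the claimed inequality. The main obstacle is the choice of mollifier: Riesz-type kernels as used in \cite{Schoolmann20} would give polynomial rather than logarithmic dependence on the ratio $(\lambda_M+\lambda_N)/(\lambda_M-\lambda_N)$, whereas the trapezoidal plateau is what gives the sharp logarithmic control exploited here, illustrating why Saksman's flexible convolution formula outperforms the Riesz-mean approach.
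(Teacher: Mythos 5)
Your proof is correct and follows essentially the same route as the paper: the same trapezoidal plateau $\hat\psi$ (written as the normalized convolution of two indicator functions), the same product-of-sines formula for $\psi$, an $L^1$ estimate yielding the logarithmic bound, Lemma \ref{lem:Bohr1} for $\|R_\psi(D)\|_\infty\leq\|\psi\|_1\|f\|_\infty$, and the same decomposition $S_N(D)=R_\psi(D)-\sum_{n=N+1}^{M-1}a_n\hat\psi(\lambda_n)e^{-\lambda_n s}$ combined with $|a_n|\leq\|f\|_\infty$. The only cosmetic differences are the three-region splitting of the $L^1$ integral and the justification of the coefficient bound via the Bohr lift rather than the citation used in the paper, neither of which changes the argument.
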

\begin{proof}
We set $h=\frac{\lambda_M-\lambda_N}2$. Let $u$ be the function equal to $1$ on $[-\lambda_N,\lambda_N]$, to $0$ on $\RR\backslash(-\lambda_M,\lambda_M)$, and which is affine on $(-\lambda_M,\lambda_N)$ and
on $(\lambda_N,\lambda_M)$. The function $u$ may be written
$$u=\mathbf 1_{[-\lambda_N-h,\lambda_N+h]}\star\left(\frac1{2h} \mathbf 1_{[-h,h]}\right).$$
This formula allows us to compute the Fourier transform of $u$ which is equal to 
$$\widehat u(t)=2\frac{\sin((\lambda_N+h)t)}{t}\cdot\frac{\sin(ht)}{ht}$$
which is an $L^1$ function. Moreover
\begin{align*}
\|\widehat u\|_1&\leq 4\int_0^{+\infty}\left|\frac{\sin((\lambda_N+h)t)}{t}\right|\times \left|\frac{\sin(ht)}{ht}\right|dt\\
&\leq 4\int_0^{+\infty}\left| \frac{\sin\left(\frac{\lambda_N+h}h x\right)}{x}\right|\times \left|\frac{\sin(x)}{x}\right|dx\\
&\leq 4\int_0^1 \left| \frac{\sin\left(\frac{\lambda_N+h}h x\right)}{x}\right| dx+4\int_1^{+\infty}\frac1{x^2}dx\\
&\leq C\log\left(\frac{\lambda_N+h}{h}\right)+4=C\log\left(\frac{\lambda_M+\lambda_N}{\lambda_M-\lambda_N}\right)+4
\end{align*}
where we have used well-known estimates of the $L^1$-norm of the sinus cardinal function. We then applied Lemma \ref{lem:Bohr1} to $\psi\in L^1$ defined by $\widehat\psi=u$. By the Fourier inverse formula, 
$$\left\|\sum_{n=1}^{M-1}a_n\widehat{\psi}(\lambda_n)e^{-\lambda_n s}\right\|_\infty\leq 
\left(C\log\left(\frac{\lambda_M+\lambda_N}{\lambda_M-\lambda_N}\right)+4\right)\|f\|_\infty.$$
We get the conclusion by writing
\begin{eqnarray*}
\sum_{n=1}^N a_n e^{-\lambda_n s}=\sum_{n=1}^{M-1}a_n\widehat{\psi}(\lambda_n)e^{-\lambda_n s}-\sum_{n=N+1}^{M-1}a_n\widehat{\psi}(\lambda_n)e^{-\lambda_n s}
\end{eqnarray*}
and by using that $|a_n|\leq \|f\|_\infty$ (see \cite[Corollary 3.9]{Schoolmann20}) and  $\|\widehat{\psi}\|_\infty\leq 1$.
\end{proof}

From the Bohr-Cahen formula to compute the abscissa of uniform convergence of a $\lambda$-Dirichlet series,
$$\sigma_u(D)\leq \limsup_N \frac{\log\left(\sup_{t\in\RR}\left|\sum_{n=1}^N a_n e^{-\lambda_n it}\right|\right)}{\lambda_N}$$
we get the following corollary.
\begin{corollary}\label{cor:Bohr}
Let $\lambda$ be a frequency satisfying (NC). Then $\lambda$ satisfies Bohr's theorem.
\end{corollary}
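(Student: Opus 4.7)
The plan is to simply combine the three ingredients that are now in place: the quantitative bound on $\|S_N\|$ in Theorem \ref{thm:BohrNC}, the growth/spacing condition (NC), and the Bohr--Cahen inequality for $\sigma_u$ recalled just above the corollary. The idea is that Theorem \ref{thm:BohrNC} is formulated as a supremum over $M>N$, and (NC) is precisely the statement that we can choose $M$ so that the right-hand side is $O(e^{\delta\lambda_N})$ for any prescribed $\delta>0$.

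Fix $D=\sum_n a_n e^{-\lambda_n s}\in\dinfext$ with bounded holomorphic extension $f$ on $\CC_0$. It suffices to show $\sigma_u(D)\leq 0$, since then $D$ converges uniformly on $\CC_\veps$ for every $\veps>0$, which is exactly Bohr's theorem for $D$. First, fix $\delta>0$. By Definition \ref{def:NC}, there exists $C_\delta>0$ such that for every $N\geq 1$ one can choose some integer $M=M(N)>N$ satisfying
$$\log\left(\frac{\lambda_M+\lambda_N}{\lambda_M-\lambda_N}\right)+(M-N)\leq C_\delta\,e^{\delta\lambda_N}.$$

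Second, apply Theorem \ref{thm:BohrNC} with this choice of $M$. Since the operator norm bound holds for every admissible $M$, we get a constant $C'>0$, depending only on $\lambda$ and $\delta$, such that for all $N\geq 1$
$$\|S_N D\|_\infty\leq C'\,e^{\delta\lambda_N}\,\|f\|_\infty.$$

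Third, feed this into the Bohr--Cahen formula quoted just before the corollary:
$$\sigma_u(D)\leq \limsup_N \frac{\log\|S_N D\|_\infty}{\lambda_N}\leq \limsup_N\left(\frac{\log(C'\|f\|_\infty)}{\lambda_N}+\delta\right)=\delta,$$
where the last equality uses $\lambda_N\to+\infty$. Since $\delta>0$ is arbitrary, $\sigma_u(D)\leq 0$, which proves the corollary.

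There is no real obstacle: all the analytic content sits in Theorem \ref{thm:BohrNC}, and (NC) was designed so that its quantitative form matches exactly the upper bound in that theorem. The only point to be careful about is making the choice of $M$ depend on $N$ (and on $\delta$), which is legitimate because Theorem \ref{thm:BohrNC} provides a uniform bound valid for any pair $M>N\geq 1$.
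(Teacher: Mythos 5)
Your proof is correct and follows exactly the route the paper intends: the corollary is stated as an immediate consequence of Theorem \ref{thm:BohrNC} combined with (NC) and the Bohr--Cahen formula, which is precisely your three-step argument (choosing $M=M(N)$ via (NC), bounding $\|S_N D\|_\infty\leq C e^{\delta\lambda_N}\|f\|_\infty$, and letting $\delta\to 0$). Nothing is missing.
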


Let us now compare Theorem \ref{thm:BohrNC} with the results of \cite{Schoolmann20}. There it is shown that, for all $N\geq 1$ and all $k\in (0,1]$, 
$$\|S_N\|_{\dinfext\to\dinf}\leq  \frac Ck \left(\frac{\lambda_{N+1}}{\lambda_{N+1}-\lambda_N}\right)^{1/k}.$$
The right hand side is optimal for $k=\frac1 {\log\left(\frac{\lambda_{N+1}}{\lambda_{N+1}-\lambda_N}\right)}$ which implies that 
$$\|S_N\|_{\dinfext\to\dinf}\leq C\log\left(\frac{\lambda_{N+1}}{\lambda_{N+1}-\lambda_N}\right).$$
Hence, we get the case $M=N+1$ of Theorem \ref{thm:BohrNC}.

\subsection{The case of $\mathcal H_\infty(\lambda)$}

So far, we have defined three spaces which are candidates for being the $H_\infty$-space of $\lambda$-Dirichlet series: $\dinf$, $\dinfext$, and $\mathcal H_\infty(\lambda)$. We know that we always have the canonical inclusion $\dinf\subset\dinfext\subset \mathcal H_\infty(\lambda)$ (see \cite[Theorem 2.17]{DSRiesz}) and that, when $\lambda$ satisfies Bohr's theorem, the three spaces are equal. Observe also that $\mathcal H_\infty(\lambda)$ is the only space that is always complete. 

Thus, Theorem \ref{thm:BohrNC} does not always provide an answer for estimating the norm of $S_N$ as an operator on $\mathcal H_\infty(\lambda)$. Fortunately, the proof extends easily using the second (and easiest part) of Theorem \ref{thm:saksman}.

\begin{theorem}\label{thm:BohrNCHinf}
Let  $\lambda$ be a frequency. There exists $C>0$ such that, for all $M>N\geq 1$, 
$$\|S_N\|_{\mathcal H_\infty(\lambda)\to\dinf}\leq C\left(\log\left(\frac{\lambda_M+\lambda_N}{\lambda_M-\lambda_N}\right)+(M-N-1)\right).$$
\end{theorem}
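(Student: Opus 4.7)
The strategy is to reproduce the proof of Theorem \ref{thm:BohrNC} almost verbatim, but with the group-theoretic part (b) of Theorem \ref{thm:saksman} in place of part (a). Given $f \in H_\infty^\lambda(G)$ with $D = \mathcal B f = \sum_n a_n e^{-\lambda_n s}$, I would pick the same trapezoidal function $u$ and its inverse Fourier transform $\psi \in L^1(\RR)$ as before: with $h = (\lambda_M - \lambda_N)/2$, $\hat{\psi} = u$ is equal to $1$ on $[-\lambda_N,\lambda_N]$, vanishes outside $(-\lambda_M,\lambda_M)$, and is affine on the two sloping segments. The same computation gives $\|\psi\|_1 \leq C \log\bigl(\frac{\lambda_M+\lambda_N}{\lambda_M-\lambda_N}\bigr) + 4$ and $\|\hat{\psi}\|_\infty \leq 1$.

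Since $G$ is compact we have $H_\infty^\lambda(G) \subset H_1^\lambda(G)$, so Theorem \ref{thm:saksman}(b) applies and gives, for almost every $\omega \in G$,
$$R_\psi(f)(\omega) = \sum_{n=1}^{M-1} a_n \hat{\psi}(\lambda_n) h_{\lambda_n}(\omega) = \int_\RR f_\omega(t)\psi(t)\,dt.$$
Combined with $\|f_\omega\|_\infty \leq \|f\|_\infty$ for almost every $\omega$, this immediately yields $\|R_\psi(f)\|_{L^\infty(G)} \leq \|\psi\|_1 \|f\|_\infty$. Crucially, because $\hat{\psi}$ has compact support the element $R_\psi(f) \in \pollambda$ is a trigonometric polynomial, hence continuous on $G$; by density of $\beta(\RR)$ in $G$ its $L^\infty(G)$ norm coincides with $\sup_{t\in\RR} |R_\psi(D)(it)|$, which in turn equals the $\dinf$ norm of the $\lambda$-Dirichlet polynomial $R_\psi(D) = \sum_{n=1}^{M-1} a_n \hat{\psi}(\lambda_n) e^{-\lambda_n s}$.

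It then suffices to decompose
$$S_N(D) = R_\psi(D) - \sum_{n=N+1}^{M-1} a_n \hat{\psi}(\lambda_n) e^{-\lambda_n s}$$
and bound the tail using $|a_n| = |\hat{f}(h_{\lambda_n})| \leq \|f\|_\infty$ together with $|\hat{\psi}(\lambda_n)| \leq 1$, which gives a contribution of at most $(M-N-1)\|f\|_\infty$. Combining the two estimates reproduces the desired inequality. The only delicate point is the identification between the $L^\infty(G)$ norm and the $\dinf$ norm at Step~4, which rests on the fact that $R_\psi(f)$ is genuinely a polynomial; beyond this, there is no substantive obstacle, since the analytic heart of the argument (the $L^1$ estimate on $\psi$ and Saksman's formula) has already been dealt with.
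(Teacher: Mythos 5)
Your proposal is correct and follows essentially the same route as the paper: the same trapezoidal mollifier $\psi$ with the $L^1$ bound from Theorem \ref{thm:BohrNC}, part (b) of Theorem \ref{thm:saksman} applied via $H_\infty^\lambda(G)\subset H_1^\lambda(G)$, the a.e. bound $\|f_\omega\|_\infty\leq\|f\|_\infty$, and the decomposition $S_N(D)=R_\psi(D)-\sum_{n=N+1}^{M-1}a_n\hat\psi(\lambda_n)e^{-\lambda_n s}$ with $|\hat f(h_{\lambda_n})|\leq\|f\|_\infty$. Your extra step identifying the $L^\infty(G)$ norm of the polynomial $R_\psi(f)$ with the $\dinf$ norm of $R_\psi(D)$ is exactly what the paper leaves implicit in "we conclude as above," and it is handled correctly.
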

\begin{proof}
We do the proof in $H_\infty^\lambda(G)$. Let $f=\sum_n a_n h_{\lambda_n} \in H_\infty^\lambda(G)$.
We pick the same function $\psi$ and observe, that for almost all $\omega\in G$, 
\begin{align*}
\left|\sum_{n=1}^{+\infty} a_n \widehat\psi(\lambda_n)h_{\lambda_n}(\omega)\right|&\leq \int_{\RR}|f_\omega(it) \psi(t)|dt\\
&\leq \left(C\log\left(\frac{\lambda_M+\lambda_N}{\lambda_M-\lambda_N}\right)+4\right)\|f\|_\infty
\end{align*}
and we conclude as above.
\end{proof}

%%%%%%%%%%%%%%%%%%%%%
%% Maximal inequality %%%%%%%%%%
%%%%%%%%%%%%%%%%%%%%%

\section{Maximal inequalities in $H_p^\lambda(G)$}

\subsection{Helson's theorem under (NC)}
In this section, we prove the following theorem, which improves the main result of \cite{DSHelson} and answers an open question of \cite{DSsurvey}.

\begin{theorem}\label{thm:Helson}
Let $\lambda$ satisfy (NC), let $(G,\beta)$ be a $\lambda$-Dirichlet group. For every $u>0$, there exists a constant $C:=C(u,\lambda)$ such that, for all $1\leq p\leq +\infty$ and for all $f\in H_p^{\lambda}(G)$, 
\begin{equation*}
\left\|\sup_{\sigma\geq u}\sup_N \left|\sum_{n=1}^N \hat{f}(h_{\lambda_n})e^{-\sigma \lambda_n}h_{\lambda_n}\right|\right\|_p\leq C\|f\|_p.
\end{equation*}
In particular, for every $u>0$, $\sum_{1}^{+\infty}\hat f(h_{\lambda_n}) e^{-u \lambda_n}h_{\lambda_n}$ converges almost everywhere on $G$. 
\end{theorem}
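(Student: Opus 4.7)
The plan is to combine Saksman's vertical convolution formula (Theorem \ref{thm:saksman}(b)) with the trapezoidal mollifier already employed in the proof of Theorem \ref{thm:BohrNC}, and then to reduce matters to a Carleson--Hunt type maximal inequality on $\RR$ which can be transferred to $G$.

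Fix $u > 0$ and pick any $\delta \in (0, u)$. By (NC), for each $N \geq 1$ there exists $M = M(N) > N$ with
$$\log\left(\frac{\lambda_M + \lambda_N}{\lambda_M - \lambda_N}\right) + (M - N) \leq C e^{\delta \lambda_N}.$$
Let $u_N$ be the trapezoid equal to $1$ on $[-\lambda_N, \lambda_N]$, vanishing outside $[-\lambda_M, \lambda_M]$ and affine in between, and let $\psi_N \in L^1(\RR)$ satisfy $\hat\psi_N = u_N$; the argument of Theorem \ref{thm:BohrNC} gives $\|\psi_N\|_1 \leq C' e^{\delta \lambda_N}$. For $f = \sum_n a_n h_{\lambda_n} \in H_p^{\lambda}(G)$ and $\sigma \geq u$, decompose
$$\sum_{n=1}^N a_n e^{-\sigma \lambda_n} h_{\lambda_n}(\omega) = \sum_{n=1}^{M-1} u_N(\lambda_n)\, a_n\, e^{-\sigma \lambda_n} h_{\lambda_n}(\omega) - E_{N,\sigma}(\omega).$$
The error $E_{N,\sigma}$ has at most $M - N - 1 \leq C e^{\delta \lambda_N}$ terms, each of modulus at most $|a_n|\, e^{-u \lambda_N} \leq \|f\|_p\, e^{-u \lambda_N}$ (using $|a_n| \leq \|f\|_1 \leq \|f\|_p$, $|u_N| \leq 1$ and $|h_{\lambda_n}(\omega)| = 1$); since $\delta < u$, one gets $\sup_{N,\, \sigma \geq u} |E_{N,\sigma}(\omega)| \leq C(u,\lambda) \|f\|_p$ uniformly in $\omega$, which is harmless in any $L^p(G)$-norm.

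For the main sum, I would apply Theorem \ref{thm:saksman}(b) to the Poisson-shifted function $P_\sigma f := \sum_n a_n e^{-\sigma \lambda_n} h_{\lambda_n}$ (which lies in $H_p^{\lambda}(G)$ with $\|P_\sigma f\|_p \leq \|f\|_p$) and the mollifier $\psi_N$, obtaining
$$\sum_{n=1}^{M-1} u_N(\lambda_n)\, a_n\, e^{-\sigma \lambda_n} h_{\lambda_n}(\omega) = \int_{\RR} f_\omega(t)\, \Phi_{N,\sigma}(t)\, dt,$$
where $\Phi_{N,\sigma} = \mathcal P_\sigma \star \psi_N$, with $\mathcal P_\sigma$ the Poisson kernel, has Fourier transform $u_N(\xi)\, e^{-\sigma |\xi|}$. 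The theorem then reduces to two ingredients: a Carleson--Hunt type maximal inequality
$$\Bigl\| \sup_{N \geq 1,\, \sigma \geq u} \bigl| \Phi_{N,\sigma} \star F \bigr| \Bigr\|_{L^p(\RR)} \leq C(u,\lambda)\, \|F\|_{L^p(\RR)}, \qquad 1 \leq p \leq +\infty,$$
with constant independent of $p$; and a de Leeuw/Cowling-type transference step passing from this convolution bound on $\RR$ (applied to $F = f_\omega$) to the desired $L^p(G)$-estimate via the fibration $f_\omega(t) = f(\omega \beta(t))$. The almost-everywhere convergence statement then follows from the maximal bound by the standard density argument, using that polynomials are dense in $H_p^{\lambda}(G)$ when $p < \infty$.

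The main obstacle is the Carleson--Hunt type maximal inequality above, which is the ``independent interest'' result advertised in the introduction. For $1 < p < \infty$ it parallels the classical Carleson--Hunt theorem for the smoothed truncations $u_N$, with the Poisson factor $e^{-\sigma |\xi|}$ only providing additional regularity. The genuine novelty lies at the endpoints $p = 1$ and $p = +\infty$ with $p$-independent constant: there one must exploit the Hardy-space structure of $H_p^{\lambda}(G)$ (the one-sided spectrum of $f_\omega$) together with the Poisson factor $e^{-\sigma |\xi|}$, $\sigma \geq u > 0$, to dominate the oscillation created by the sharp truncations $u_N$ and to circumvent the failure of the weak-$(1,1)$ bound for the bare Carleson operator.
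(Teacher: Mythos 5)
Your setup --- the (NC)-adapted trapezoidal mollifier $\psi_N$ with $\|\psi_N\|_1\leq Ce^{\delta\lambda_N}$, the splitting of the Poisson-damped partial sum into $R_{\psi_N}(P_\sigma f)$ plus an error of at most $M-N-1\leq Ce^{\delta\lambda_N}$ terms each of modulus at most $e^{-u\lambda_N}\|f\|_p$, and the appeal to Theorem \ref{thm:saksman}(b) --- is exactly the paper's starting point, and the error estimate is correct. The gap is in the main term: you reduce everything to a Carleson--Hunt type maximal inequality on $\RR$ for the kernels $\Phi_{N,\sigma}$, with a constant independent of $p$ down to $p=1$, and then explicitly leave it as ``the main obstacle''. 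That inequality is precisely where all of the difficulty lies, and nothing in the sketch indicates how the loss $\|\Phi_{N,\sigma}\|_1\leq\|\psi_N\|_1\leq Ce^{\delta\lambda_N}$ is to be removed. Moreover the proposed transference step is not well posed: for $p<\infty$ the fibre $f_\omega$ is an almost periodic function which does not belong to $L^p(\RR)$, so an $L^p(\RR)$ bound cannot be ``applied to $F=f_\omega$''; and on $\RR$, with its infinite measure, even $\sup_{\sigma\geq u}|\mathcal P_\sigma\star F|$ fails to be bounded on $L^1(\RR)$ in the strong sense. The natural home for the estimate is the compact group $G$ itself.

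The paper never proves (and does not need) a lossless Carleson--Hunt bound. It proves the weighted maximal inequality $\left\|\sup_{n\leq N}|R_{\phi_n}f|\right\|_p\leq Ce^{\delta\lambda_N}\|f\|_p$ (Lemma \ref{lem:maximalRphi}), with constant independent of $p$, by linearizing the supremum through a measurable choice $n(\omega)$ and changing variables $\omega'=\omega\beta(t)$ under the Haar measure at $p=1$ (this is where Lemma \ref{lem:L1norm} and the refined form of (NC) from Lemma \ref{lem:superNC} enter), handling $p=\infty$ trivially, and interpolating on $L_p(G)$ --- note that $R_{\phi_n}$ has to be defined on all of $L_1(G)$ via the convolution formula, because interpolation of the spaces $H_p^\lambda(G)$ themselves may fail. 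Comparing $S_n$ with $R_{\phi_n}$ gives Theorem \ref{thm:CarlesonHunt}, i.e. control of $\sup_{n\leq N}|S_nf|$ with the loss $e^{\delta\lambda_N}$. The Poisson factor is then used to absorb this loss: by Lemma 3.4 of \cite{DSHelson} (an Abel-summation estimate), with $\delta=u/3$ the Poisson-damped partial sums are dominated pointwise by $\sup_n e^{-2\delta\lambda_n}|S_nf|$, and a stopping-time partition of $G$ according to the range $[k,k+1)$ containing $\lambda_{n(\omega)}$, combined with the weighted bound on each piece, yields a convergent geometric series. Your proposal instead keeps the Poisson factor inside the kernel and hopes for a lossless maximal bound; to complete it you would have to supply exactly this absorption mechanism (or an equivalent of it), formulated on $G$ rather than on $\RR$.
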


Let us explain the strategy for the proof. When $p>1$, the almost everywhere convergence is known to hold without any assumption on $\lambda$. This is a consequence of the Carleson-Hunt type result proved in \cite{DSHelson}: for all frequencies $\lambda$, for all $(G,\beta)$ a $\lambda$-Dirichlet group, for all $p\in (1,+\infty)$, there exists $C(p)>0$ such that for all $f\in H_p^\lambda(G)$, 
\begin{equation}
\label{eq:Helson1}
\left( \int_G \sup_n |S_n f(\omega)|^p d\omega\right)^{1/p}\leq C(p)\|f\|_p
\end{equation}
where $S_n(f)=\sum_{k=1}^n \widehat{f}(h_{\lambda_k}) h_{\lambda_k}$ is the partial sum operator (the constant $C(p)$ does not even depend on $\lambda$).  We shall prove a variant of \eqref{eq:Helson1} under (NC), namely
\begin{equation}
\label{eq:Helson2}
\left( \int_G \sup_n e^{-\delta \lambda_n} |S_n f(\omega)|^p d\omega\right)^{1/p}\leq C(\lambda,\delta)\|f\|_p
\end{equation}
valid for all $p\geq 1$, all $\delta>0$ and all $f\in H_p^\lambda(G)$, with a constant $C(\lambda,\delta)$ independent of $p$.

The proof of \eqref{eq:Helson2} will be done for $p=1$ and for $p=+\infty$ and will be finished by interpolation. Unfortunately, it is in general false that $[H_{p_0}^\lambda(G),H_{p_1}^\lambda(G)]_{\theta}=H_{p_\theta}^\lambda(G)$ (see \cite{BayMas}) and we will use an auxiliary operator defined on the whole $L_1(G)$.

We begin by establishing several lemmas. First, we shall prove that we may require additional properties on a sequence satisfying (NC).
\begin{lemma}\label{lem:superNC}
Let $\lambda$ be a frequency satisfying (NC). Then there exists a frequency $\lambda'$ such that $\lambda\subset\lambda'$ and, for all $\delta>0$, there exists $C>0$ such that, for all $n\in\NN$, there exists $m>n$ with 
\begin{eqnarray}
\label{eq:superNC1}
\log(\lambda'_m+\lambda'_n)&\leq&Ce^{\delta\lambda_n'}\\
\label{eq:superNC2}
-\log(\lambda'_m-\lambda'_n)&\leq&Ce^{\delta\lambda'_n}\\
\label{eq:superNC3}
m-n&\leq&Ce^{\delta\lambda'_n}.
\end{eqnarray}
\end{lemma}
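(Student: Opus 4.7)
My plan is to construct $\lambda'$ by adjoining to $\lambda$ a controlled family of auxiliary ``bridging'' points inside each excessively long gap of $\lambda$, so that the combined (NC) estimate can be split into the three independent bounds required.

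Fix $\delta_0\in(0,\delta)$ and apply (NC) with parameter $\delta_0$: there is $C_0>0$ such that for each $n$ one finds $m_0(n)>n$ with
\[
\log\frac{\lambda_{m_0(n)}+\lambda_n}{\lambda_{m_0(n)}-\lambda_n}+(m_0(n)-n)\le C_0 e^{\delta_0\lambda_n}.
\]
This directly yields $m_0(n)-n\le C_0 e^{\delta_0\lambda_n}$ and $\lambda_{m_0(n)}-\lambda_n\ge 2\lambda_n e^{-C_0 e^{\delta_0\lambda_n}}$, i.e.\ variants of \eqref{eq:superNC3} and \eqref{eq:superNC2}. The piece \emph{not} supplied by (NC) is a direct upper bound on $\lambda_{m_0(n)}$, so that \eqref{eq:superNC1} may fail if $\lambda_{m_0(n)}$ is astronomically large.

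I remedy this by enlarging $\lambda$ only at the ``sparse'' indices, i.e.\ those $n$ with $\lambda_{m_0(n)}>\lambda_n^2$. At each such $n$ I insert the finite geometric-squaring chain $\nu_{n,1}=2\lambda_n$, $\nu_{n,k+1}=\nu_{n,k}^2$, truncated at the first index where $\nu_{n,k}$ reaches $\lambda_{m_0(n)}$. The chain length is $K_n=O(\log\log\lambda_{m_0(n)})$, which for every $\delta>0$ is negligible compared to $e^{\delta\lambda_n}$. Let $\lambda'$ denote the reindexed union of $\lambda$ and all inserted chains; by construction $\lambda\subset\lambda'$.

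I then verify the three conditions of (NC') at an arbitrary $n'$ by cases. If $\lambda'_{n'}$ belongs to or immediately precedes a bridging chain, the choice $m'=n'+1$ works: $\lambda'_{m'}\le (\lambda'_{n'})^2$ gives $\log(\lambda'_{m'}+\lambda'_{n'})=O(\log\lambda'_{n'})$, hence \eqref{eq:superNC1}; $\lambda'_{m'}-\lambda'_{n'}\ge\lambda'_{n'}(\lambda'_{n'}-1)\gg 1$ gives \eqref{eq:superNC2}; and $m'-n'=1$ gives \eqref{eq:superNC3}. If instead $\lambda'_{n'}=\lambda_k$ is not sparse (so no chain was inserted above $\lambda_k$), then $\lambda_{m_0(k)}\le\lambda_k^2$ by definition, and $m'$ equal to the index of $\lambda_{m_0(k)}$ in $\lambda'$ yields $\log(\lambda_{m_0(k)}+\lambda_k)\le 3\log\lambda_k$ for \eqref{eq:superNC1}; the other two inequalities pass through from (NC).

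The main obstacle is verifying \eqref{eq:superNC3} once the chains are present: the inserted chains must not accumulate and blow up the index gap $m'-n'$. Two ingredients make this work. First, (NC) bounds the number of original $\lambda$-elements in each interval $(\lambda_n,\lambda_{m_0(n)})$ by at most $C_0 e^{\delta_0\lambda_n}$. Second, each bridging chain contributes only $K_n=O(\log\log\lambda_{m_0(n)})$ further terms, negligible compared to $e^{\delta\lambda_n}$. Hence the index gap stays within $Ce^{\delta\lambda'_{n'}}$ after absorbing $\delta_0\to\delta$ into the constant $C$.
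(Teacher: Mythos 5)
Your construction is headed in a workable direction, but as written the proof has three concrete gaps, and the second of your ``two ingredients'' is actually false. (a) The claim that the chain length $K_n=O(\log\log\lambda_{m_0(n)})$ is ``negligible compared to $e^{\delta\lambda_n}$'' has no justification: as you yourself note at the outset, (NC) gives no upper bound on $\lambda_{m_0(n)}$ in terms of $\lambda_n$, so $\lambda_{m_0(n)}$ may be a tower of exponentials in $\lambda_n$ and then $\log\log\lambda_{m_0(n)}\gg e^{\delta\lambda_n}$. (b) Your Case A is incorrect as stated: the successor in the reindexed union $\lambda'$ of a chain element $\nu_{n,k}$ need not be $\nu_{n,k}^2$; in general $m_0(n)>n+1$, so up to $C_0e^{\delta_0\lambda_n}$ original terms of $\lambda$, as well as elements of chains based at other indices, are interleaved in $(\lambda_n,\lambda_{m_0(n)})$, and the true successor can be an original point arbitrarily close to $\nu_{n,k}$, ruining \eqref{eq:superNC2} for the choice $m'=n'+1$ (and likewise the claim $\lambda'_{m'}\le(\lambda'_{n'})^2$). (c) Most importantly, \eqref{eq:superNC3} is never actually proved: what has to be counted is the number of points of $\lambda'$ lying between $\lambda'_{n'}$ and your chosen target, which includes elements of possibly very many \emph{different} chains whose bases sit far below $\lambda'_{n'}$. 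Your first ingredient only bounds the original points of $\lambda$ in the window, and per-chain length (even if it were small) is not the relevant quantity; the number of distinct chains meeting the window is nowhere controlled. A bound of the right kind does exist (all bases of chains reaching above a height $\nu$ lie between the smallest such base $n^*$ and $m_0(n^*)$, hence number at most $C_0e^{\delta_0\lambda_{n^*}}$, and the squaring structure gives $O(1)$ elements per chain per window), but none of this is in your write-up, and it is precisely the heart of the matter.

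For comparison, the paper's construction avoids all of this bookkeeping: it inserts points in every gap of $\lambda$ exceeding $1$ so that consecutive distances of the inserted points lie in $[1/2,1]$. Then for an inserted point one takes $m=n+1$ (the gap in $[1/2,1]$ gives \eqref{eq:superNC2}, and \eqref{eq:superNC1} follows since $\log(2\lambda'_n+1)$ is negligible), while for an original point $\lambda_k$ one transports the (NC) witness $l$ by setting $m=n+(l-k)$: since all consecutive gaps of $\lambda'$ are at most $1$, $\lambda'_m\le\lambda'_n+(l-k)\le\lambda'_n+Ce^{\delta\lambda'_n}$, which gives \eqref{eq:superNC1}, and \eqref{eq:superNC2} holds either because nothing was inserted between $\lambda_k$ and $\lambda_l$ (so $\lambda'_m=\lambda_l$ and (NC) applies) or because the presence of an inserted point forces $\lambda'_m-\lambda'_n\ge 1/2$. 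If you wish to salvage your squaring-chain approach, you must replace the false chain-length claim by an accumulation bound of the index-window type sketched above and redo the case analysis taking interleaving into account.
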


\begin{proof}
We construct inductively $\lambda'$ as follows. We set $\lambda_1'=\lambda_1$. Assume that the sequence $\lambda'$ has been built until step $n$, namely that we have constructed $\lambda'_1,\dots,\lambda'_{k_n}$ with $\lambda'_{k_n}=\lambda_n$. If $\lambda_{n+1}\leq\lambda_n+1$, then we set $\lambda'_{k_n+1}=\lambda_{n+1}$ and $k_{n+1}=k_n+1$. Otherwise, we include as many terms $\lambda'_{k_n+1},\dots,\lambda'_{k_{n+1}}$ as necessary so that, for all $j=k_n+1,\dots,k_{n+1}-1$, $1/2\leq \lambda'_{j+1}-\lambda'_j\leq 1$ and $\lambda'_{k_{n+1}}=\lambda_{n+1}$. Namely, we add terms in the sequence $\lambda$ when there is a gap greater than $1$ between two successive terms, and the difference between two consecutive terms of $\lambda'$ is now less than $1$. 

Let us show that the sequence $\lambda'$ satisfies the above conclusion. Let $\lambda'_n$ be any term of the sequence $\lambda'$. If $\lambda'_n$ does not belong to $\lambda$, then we have just to consider $m=n+1$. Otherwise, if $\lambda'_n=\lambda_k$ for some $k\leq n$, there exists $l>k$ such that
\begin{eqnarray}
\log\left(\frac{\lambda_l+\lambda_k}{\lambda_l-\lambda_k}\right)&\leq& Ce^{\delta\lambda_k} \label{eq:superNC4}\\
\nonumber l-k&\leq&Ce^{\delta\lambda_k}.
\end{eqnarray}
Set $m=n+(l-k)$ and observe that we have
$$\log(\lambda'_m+\lambda'_n)\leq \log\left(C e^{\delta\lambda'_n}+2\lambda'_n\right)\leq C'e^{\delta\lambda'_n}.$$
If there is no gap between $\lambda_k$ and $\lambda_l$, then $\lambda'_m=\lambda_l$ and \eqref{eq:superNC4} implies
$$-\log(\lambda'_m-\lambda'_n)=-\log(\lambda_l-\lambda_k)\leq Ce^{\delta\lambda_k}=Ce^{\delta\lambda'_n}.$$
If there is a gap between $\lambda_k$ and $\lambda_l$, then $\lambda'_m-\lambda'_n\geq 1/2$, and \eqref{eq:superNC2} holds trivially.
\end{proof}

In the sequel, when we will pick a frequency $\lambda$ satisfying (NC), we will in fact assume that it satisfies the stronger properties given by Lemma \ref{lem:superNC}.

\smallskip

For $a>0$ and $h>0$, we shall denote by $\psi_{a,h}$ the function defined by 
$$\psi_{a,h}(t)=\frac{\sin((a+h)t)}{t}\times\frac{\sin(ht)}{ht}.$$
The estimation of the $L^1$-norm of $\psi_{a,h}$ was a crucial point in order to apply Saksman's convolution formula during the proof of Theorem \ref{thm:BohrNC}. In order to obtain our maximal estimates, we will need a similar inequality allowing now $a$ and $h$ to vary.

\begin{lemma}\label{lem:L1norm}
Let $a:\RR\to(0,+\infty)$ and $h:\RR\to(0,+\infty)$ be two measurable functions. Assume that there exists $\kappa>0$ such that $a(t)+h(t)\leq\kappa$ and $h(t)\geq \kappa^{-1}$ for all $t\in\RR$. Then 
$$\int_{\RR}\left|\psi_{a(t),h(t)}(t)\right|dt\leq 4+4\log\kappa.$$
\end{lemma}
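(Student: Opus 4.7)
The plan is to obtain a pointwise bound on $|\psi_{a(t),h(t)}(t)|$ that depends on $t$ only through the uniform bounds $a(t)+h(t)\le\kappa$ and $h(t)\ge\kappa^{-1}$, by using in each of three disjoint regions of $\RR$ a different one of three elementary estimates for $|\psi_{a,h}(t)|$. The three estimates, valid for any fixed $a,h>0$ and $t\ne 0$, are
\begin{equation*}
|\psi_{a,h}(t)|\le a+h,\qquad |\psi_{a,h}(t)|\le \frac{1}{|t|},\qquad |\psi_{a,h}(t)|\le \frac{1}{h\,t^{2}},
\end{equation*}
each obtained by combining $|\sin x|\le|x|$ or $|\sin x|\le 1$ on the two factors $\sin((a+h)t)/t$ and $\sin(ht)/(ht)$ appearing in $\psi_{a,h}$, together with the trivial estimate $|\sin x/x|\le 1$.

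I would then split $\RR$ at $|t|=1/\kappa$ and $|t|=\kappa$. On the inner region $\{|t|\le 1/\kappa\}$ I use the first estimate, which gives $|\psi_{a(t),h(t)}(t)|\le a(t)+h(t)\le \kappa$ and a contribution $\kappa\cdot(2/\kappa)=2$. On the outer region $\{|t|\ge\kappa\}$ I use the third estimate, which gives $|\psi_{a(t),h(t)}(t)|\le \kappa/t^{2}$ because $1/h(t)\le \kappa$, and a contribution $2\int_{\kappa}^{\infty}\kappa\,t^{-2}\,dt=2$. On the middle region $\{1/\kappa\le|t|\le\kappa\}$ I use the universal second estimate $|\psi_{a(t),h(t)}(t)|\le 1/|t|$, which contributes $2\int_{1/\kappa}^{\kappa} dt/t = 4\log\kappa$. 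Summing the three contributions yields exactly $4+4\log\kappa$.

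The only delicate point is choosing the right split points. The inner threshold $1/\kappa$ is the largest $|t|$ for which the first estimate is useful uniformly (one needs $|t|\le 1/(a(t)+h(t))$ to have $a+h\le 1/|t|$, and $1/(a(t)+h(t))\ge 1/\kappa$), while the outer threshold $\kappa$ is the smallest $|t|$ for which the third estimate becomes useful uniformly (one needs $|t|\ge 1/h(t)$, and $1/h(t)\le \kappa$). On the middle gap, the $1/|t|$ bound is the only one of the three whose usefulness is independent of the values of $a(t),h(t)$, and integrating it over $[1/\kappa,\kappa]$ is the source of the $\log\kappa$. I do not anticipate any real obstacle; this is a $t$-by-$t$ version of the constant-$a,h$ computation carried out in the proof of Theorem~\ref{thm:BohrNC}, adapted to the fact that the rescaling $x=ht$ used there is no longer legitimate when $h$ varies with $t$.
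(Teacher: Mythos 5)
Your proof is correct and is essentially the paper's own argument: the same three pointwise bounds $a+h$, $1/|t|$, and $1/(h t^2)$ on the respective regions $|t|\leq\kappa^{-1}$, $\kappa^{-1}\leq|t|\leq\kappa$, $|t|\geq\kappa$, integrated to give $2+4\log\kappa+2$. No gaps; the only addition over the paper is your (accurate) explanation of why the split points $\kappa^{-1}$ and $\kappa$ are the natural ones.
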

\begin{proof}
It suffices to observe that 
\begin{itemize}
\item when $0<|t|\leq\kappa^{-1}$, then 
$$\left|\psi_{a(t),h(t)}(t)\right|\leq |a(t)+h(t)|\times 1\leq\kappa.$$
\item when $\kappa^{-1}\leq|t|\leq \kappa$, then 
$$\left|\psi_{a(t),h(t)}(t)\right|\leq \frac1{|t|}\times 1=\frac 1{|t|}.$$
\item when $|t|\geq\kappa$, then 
$$\left|\psi_{a(t),h(t)}(t)\right|\leq \frac{1}{h(t) t^2}\leq\frac{\kappa}{t^2}.$$
\end{itemize}
\end{proof}

We now fix a frequency $\lambda$ satisfying (NC) and $\delta>0$. Let $C>0$ and $m:\NN\to\NN$ be such that $m(n)>n$  for all $n\in\NN$ and \eqref{eq:superNC1}, \eqref{eq:superNC2}, \eqref{eq:superNC3} are satisfied for $m=m(n)$. For $n\in\mathbb N$, we shall denote by $h_n=(\lambda_{m(n)}-\lambda_n)/2$ and by $\phi_n$ the function $\phi_n=\psi_{\lambda_n,h_n}$.  Let us recall that $R_{\phi_n}$ is defined on $H_1^{\lambda}(G)$ by

\begin{equation}\label{eq:Helson4}
R_{\phi_n}(f)=\sum_k \widehat f(h_{\lambda_k})\widehat{\phi_n}(\lambda_k)h_{\lambda_k}.
\end{equation}
By the vertical convolution formula, we also know that $R_{\phi_n}$ is given by, for a.e. $\omega\in G$,
\begin{equation}\label{eq:Helson5}
R_{\phi_n}(f)(\omega)=\int_\RR f(\omega\beta(t))\phi_n(t)dt.
\end{equation}
Now the right hand side of the previous equality is well-defined for all functions in $L_1(G)$. Thus we will think at $R_{\phi_n}$ as the operator on $L_1(G)$ defined by \eqref{eq:Helson5}, keeping in mind that it also verifies \eqref{eq:Helson4} for $f\in H_1^\lambda(G)$. In this context, we shall prove the following maximal inequality on $R_{\phi_n}$:

\begin{lemma}\label{lem:maximalRphi}
For all $\delta>0$, there exists $C>0$ such that, for all $p\in [1,+\infty]$, for all $N\in\mathbb N$, for all $f\in L_p(G)$, 
$$\left(\int_{G}\sup_{n\leq N}|R_{\phi_n}f(\omega)|^p d\omega\right)^{1/p} \leq Ce^{\delta\lambda_N}\|f\|_p.$$
\end{lemma}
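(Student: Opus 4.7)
The plan is to combine the convolution representation of $R_{\phi_n}$ on $L_1(G)$ with a uniform envelope estimate for the family of kernels $\{\phi_n\}_{n\le N}$. Starting from $R_{\phi_n}f(\omega)=\int_\RR f(\omega\beta(t))\phi_n(t)\,dt$, the triangle inequality gives the pointwise bound
$$\sup_{n\leq N}|R_{\phi_n}f(\omega)|\leq \int_\RR |f(\omega\beta(t))|\,\Phi_N(t)\,dt,\qquad \Phi_N(t):=\sup_{n\leq N}|\phi_n(t)|.$$
Taking the $L^p(G)$-norm and using Minkowski's integral inequality together with the Haar-invariance $\|f(\cdot\beta(t))\|_p=\|f\|_p$ produces
$$\left\|\sup_{n\leq N}|R_{\phi_n}f|\right\|_p \leq \|\Phi_N\|_1\,\|f\|_p,$$
valid for every $p\in[1,+\infty]$. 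Thus everything reduces to proving $\|\Phi_N\|_1 \leq Ce^{\delta\lambda_N}$.

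For the envelope $\Phi_N$, I would mimic the trichotomy of Lemma~\ref{lem:L1norm}, but now with parameters $(a,h)=(\lambda_{n(t)},h_{n(t)})$ depending on $t$ through the choice of the worst index. Using $|\sin u|\le \min(|u|,1)$ on each factor of $\phi_n=\psi_{\lambda_n,h_n}$ yields the three universal bounds
$$|\phi_n(t)|\leq \lambda_n+h_n,\qquad |\phi_n(t)|\leq \frac1{|t|},\qquad |\phi_n(t)|\leq \frac1{h_n t^2}.$$
Since $\lambda_n+h_n=(\lambda_n+\lambda_{m(n)})/2$ and $h_n=(\lambda_{m(n)}-\lambda_n)/2$, the strengthened (NC)-bounds \eqref{eq:superNC1} and \eqref{eq:superNC2} give
$$\sup_{n\leq N}(\lambda_n+h_n)\leq \exp(Ce^{\delta\lambda_N})=:\kappa_N,\qquad \sup_{n\leq N}\frac1{h_n}\leq 2\kappa_N.$$
Consequently $\Phi_N$ is dominated by $\kappa_N$ on $[-1/\kappa_N,1/\kappa_N]$, by $1/|t|$ on the middle annulus, and by $\kappa_N/t^2$ outside $[-\kappa_N,\kappa_N]$.

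Integrating over these three regions in the style of Lemma~\ref{lem:L1norm} yields $\|\Phi_N\|_1 \leq 4+4\log\kappa_N$. The essential feature is that $\log\kappa_N\leq Ce^{\delta\lambda_N}$, which collapses the double exponential to a single one; after rewriting the result in terms of the arbitrary exponent $\delta$ (say, by replacing $\delta$ by $\delta/2$ at the start), this produces the announced bound $\|\Phi_N\|_1 \le C'e^{\delta\lambda_N}$. The main obstacle is precisely this collapse: a naive estimate (summing $\|\phi_n\|_1$ over $n\leq N$) would not give an exponential-in-$\lambda_N$ bound, because by \eqref{eq:superNC3} $N$ itself can be of order $e^{c\lambda_N}$ while each $\|\phi_n\|_1$ is already of that order. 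It is decisive that the trichotomy localises the whole cost of the envelope to $\log\kappa_N$, which is exactly the quantity controlled by (NC).
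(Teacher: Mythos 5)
Your proof is correct, and it packages the argument differently from the paper. The analytic core is the same: the three universal bounds $|\psi_{a,h}(t)|\le a+h$, $1/|t|$, $1/(ht^2)$ together with the strengthened (NC) estimates \eqref{eq:superNC1}--\eqref{eq:superNC2}, which is exactly the content of Lemma \ref{lem:L1norm} with $\kappa=\kappa_N=\exp(Ce^{\delta\lambda_N})$, so that the cost of the maximal operator is $O(\log\kappa_N)=O(e^{\delta\lambda_N})$. The difference is structural: the paper linearizes the supremum by a measurable worst-index function $n(\omega)$, applies Fubini and the translation $\omega'=\omega\beta(t)$ (which is why Lemma \ref{lem:L1norm} is stated for $t$-dependent parameters $a(t),h(t)$), treats $p=1$ and $p=\infty$ separately, and concludes by interpolation on $L_p(G)$ — this being the reason $R_{\phi_n}$ is extended to all of $L_1(G)$ in the first place. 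You instead dominate all kernels at once by the single integrable envelope $\Phi_N=\sup_{n\le N}|\phi_n|$ and apply Minkowski's integral inequality with the Haar invariance $\|f(\cdot\,\beta(t))\|_p=\|f\|_p$, which yields the bound $\|\Phi_N\|_1\,\|f\|_p$ simultaneously for all $p\in[1,+\infty]$, with a constant visibly independent of $p$ and no interpolation or measurable selection needed; the only technical point to acknowledge (shared with the paper, and covered by \cite[Lemma 3.11]{DS19} or by first treating $f\in\mathcal C(G)$ for $p<\infty$) is the joint measurability of $(\omega,t)\mapsto f(\omega\beta(t))$ needed for Minkowski. Two cosmetic remarks: your third-region bound should be $2\kappa_N/t^2$ rather than $\kappa_N/t^2$ since you only have $1/h_n\le 2\kappa_N$, and the final replacement of $\delta$ by $\delta/2$ is unnecessary because the constant $C$ in the lemma is allowed to depend on $\delta$ and $e^{\delta\lambda_N}\ge 1$; neither affects the conclusion.
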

\begin{proof}
We start with the case $p=1$. It is enough to prove it for $f\in\mathcal C(G)$. 
Define $n:G\to\{1,\dots,N\}$, $\omega\mapsto n(\omega)$ by
$$n(\omega)=\inf\left\{l\in\{1,\dots,N\}:\ |R_{\phi_l}f(\omega)|=\sup_{n\leq N}|R_{\phi_n}f(\omega)|\right\}.$$
The function $n$ is measurable and

\begin{align*}
\int_G \sup_{n\leq N}|R_{\phi_n}(f)(\omega)|d\omega&=
\int_G|R_{\phi_{n(\omega)}}(f)(\omega)|d\omega\\
&\leq \int_{\RR}\int_G |f(\omega \beta(t))|\cdot  | \psi_{\lambda_{n(\omega)},h_{n(\omega)}}(t)|d\omega dt.
\end{align*}
In the inner integral we do the change of variables $\omega'=\omega\beta(t)$ so that 
\begin{align*}
\int_G \sup_{n\leq N}|R_{\phi_n}(f)(\omega)|d\omega&\leq 
\int_{\RR}\int_G |f(\omega')|\cdot | \psi_{\lambda_{n(\omega' \cdot \beta(t)^{-1})},h_{n(\omega'\cdot \beta(t)^{-1})}}(t)|d\omega dt\\
&\leq \int_G |f(\omega')|\int_{\RR}  | \psi_{\lambda_{n(\omega' \cdot \beta(t)^{-1})},h_{n(\omega'\cdot \beta(t)^{-1})}}(t)|dtd\omega .
\end{align*}
We now use Lemma \ref{lem:L1norm} together with \eqref{eq:superNC1} and \eqref{eq:superNC2}. This yields
\begin{align*}
\int_G \sup_{n\leq N}|R_{\phi_n}(f)(\omega)|d\omega&\leq C\int_G |f(\omega')|e^{\delta\lambda_N}d\omega'=Ce^{\delta\lambda_N}\|f\|_1.
\end{align*}

We then do the case $p=+\infty$. Let $f\in L_\infty(G)$. Then 
\begin{align*}
\sup_{\omega\in G}\sup_{n\leq N}|R_{\phi_n}f(\omega)|&=\sup_{n\leq N}\sup_{\omega\in G}
\int_{\RR} |f(\omega\beta(t))|\cdot |\psi_{\lambda_n,h_n}(t)|dt\\
&\leq \sup_{n\leq N} \|\psi_{\lambda_n,h_n}\|_1\|f\|_\infty\\
&\leq Ce^{\delta\lambda_N}\|f\|_\infty.
\end{align*}
We then conclude by interpolation.
\end{proof}

We deduce from the above work a weighted Carleson-Hunt maximal inequality for $H_1^{\lambda}(G)$-functions, which seems interesting for itself when $p=1$ (for $p\in (1,+\infty)$, an unweighted Carleson-Hunt inequality is true, the point here is that the constant does not depend on $p$). This statement was inspired by \cite{BAYHEUR2} where a similar result in the (much easier) case of $H_1(\TT)$ was essential to do a multifractal analysis of the divergence of Fourier series of functions of $H_1(\TT)$.

\begin{theorem}\label{thm:CarlesonHunt}
Let $\lambda$ satisfying (NC). For all $\delta>0$ there exists $C>0$ such that, for all $N\in\NN$, for all $p\geq1$, for all $f\in H_p^\lambda(G)$,
$$\left(\int_G \sup_{n\leq N} |S_n f(\omega)|^p d\omega\right)^{1/p}\leq Ce^{\delta\lambda_N}.$$
\end{theorem}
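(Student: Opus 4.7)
The strategy is to compare the partial sum operator $S_n$ with the mollified operator $R_{\phi_n}$, whose maximal function is already controlled by Lemma \ref{lem:maximalRphi}. The discrepancy $R_{\phi_n}f - S_n f$ involves only finitely many Dirichlet coefficients (those with index strictly between $n$ and $m(n)$), and this tail can be estimated crudely using (NC).

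\textbf{Step 1 (Fourier coefficients of $\phi_n$).} Recall $\phi_n=\psi_{\lambda_n,h_n}$ where $h_n=(\lambda_{m(n)}-\lambda_n)/2$. By construction $\widehat{\phi_n}$ is (a multiple of) the trapezoidal function $u_n$ equal to $1$ on $[-\lambda_n,\lambda_n]$, affine on $[\lambda_n,\lambda_{m(n)}]$ and its symmetric counterpart, and vanishing outside $(-\lambda_{m(n)},\lambda_{m(n)})$. Consequently $\widehat{\phi_n}(\lambda_k)=1$ for $k\leq n$, $\widehat{\phi_n}(\lambda_k)=0$ for $k\geq m(n)$, and $0\leq \widehat{\phi_n}(\lambda_k)\leq 1$ for $n<k<m(n)$.

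\textbf{Step 2 (comparison with $S_n$).} Using the expression \eqref{eq:Helson4} for $R_{\phi_n}$ on $H_1^\lambda(G)$, Step 1 gives pointwise
\[
R_{\phi_n}(f)(\omega)=S_n f(\omega)+\sum_{k=n+1}^{m(n)-1}\widehat f(h_{\lambda_k})\,\widehat{\phi_n}(\lambda_k)\,h_{\lambda_k}(\omega).
\]
Since $|\widehat f(h_{\lambda_k})|\leq\|f\|_1\leq\|f\|_p$, $|\widehat{\phi_n}(\lambda_k)|\leq 1$, and $|h_{\lambda_k}(\omega)|=1$, we get
\[
|S_n f(\omega)|\leq |R_{\phi_n}f(\omega)|+(m(n)-n-1)\|f\|_p.
\]

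\textbf{Step 3 (assembly).} By Lemma \ref{lem:superNC} we may assume $\lambda$ satisfies the strengthened (NC), so that $m(n)-n\leq Ce^{\delta\lambda_n}\leq Ce^{\delta\lambda_N}$ for all $n\leq N$. Taking the supremum over $n\leq N$ pointwise,
\[
\sup_{n\leq N}|S_n f(\omega)|\leq \sup_{n\leq N}|R_{\phi_n}f(\omega)|+Ce^{\delta\lambda_N}\|f\|_p.
\]
Integrating and applying the triangle inequality in $L^p(G)$, Lemma \ref{lem:maximalRphi} handles the first term and yields
\[
\Bigl(\int_G \sup_{n\leq N}|S_n f(\omega)|^p\,d\omega\Bigr)^{1/p}\leq C e^{\delta\lambda_N}\|f\|_p,
\]
with a constant independent of $p$.

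\textbf{Main obstacle.} All the real work has been done upstream: constructing the trapezoidal mollifier so that its $L^1$-norm stays under control (Lemma \ref{lem:L1norm}), upgrading (NC) to the three-part form (Lemma \ref{lem:superNC}), and proving the weighted maximal inequality for $R_{\phi_n}$ on $L_p(G)$ by change of variables and interpolation between $p=1$ and $p=\infty$ (Lemma \ref{lem:maximalRphi}). Given those, the present theorem is a clean assembly; the only mild subtlety is to recognize that the tail $\sum_{k=n+1}^{m(n)-1}$ has only $m(n)-n-1$ terms, which (NC) forces to grow no faster than $e^{\delta\lambda_n}$, matching the target bound.
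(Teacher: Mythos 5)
Your proof is correct and follows essentially the same route as the paper: comparing $S_n f$ with $R_{\phi_n}f$, bounding the $m(n)-n-1$ extra terms via the coefficient bound and the strengthened (NC) from Lemma \ref{lem:superNC}, and invoking Lemma \ref{lem:maximalRphi} for the maximal term. You even make explicit the factor $\|f\|_p$ on the right-hand side (which the paper's statement omits, evidently a typo) and the harmless normalization of $\widehat{\phi_n}$, both of which the paper glosses over.
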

\begin{proof}
We argue as in the proof of Theorem \ref{thm:BohrNC}, namely we write for a fixed $n\in\NN$, 
$$|S_n f(\omega) | \leq |R_{\phi_n}f(\omega)|+m(n)-n\leq |R_{\phi(n)}f(\omega)|+Ce^{\delta\lambda_n}.$$
Therefore, 
$$\sup_{n\leq N}|S_n f(\omega)|\leq \sup_{n\leq N}|R_{\phi_n}(f)(\omega)|+Ce^{\delta\lambda_N}$$
and we conclude by taking the $L_p(G)$-norm.
\end{proof}

We are now ready for the proof of Theorem \ref{thm:Helson}.

\begin{proof}[Proof of Theorem \ref{thm:Helson}]
We first proceed with the case $p\in [1,+\infty)$. We may assume that $f\in Pol_\lambda(G)$. Let $\delta=u/3$. 
For $\sigma\geq u$, using Lemma 3.4 of \cite{DSHelson}, we have
$$\left|\sum_{n=1}^N \widehat f(h_{\lambda_n}) e^{- \sigma\lambda_n} h_{\lambda_n}\right|^p\leq 
C(u)^p \sup_{n\leq N} \left| e^{-2\delta\lambda_n } \sum_{k=1}^n \widehat f(h_{\lambda_k}) h_{\lambda_k}\right|^p.$$
Hence, 
$$\sup_{\sigma>u}\sup_N \left|\sum_{n=1}^N \widehat f(h_{\lambda_n}) e^{-\sigma\lambda_n } h_{\lambda_n}\right|^p \leq C(u)^p \sup_N \left|e^{-2\delta\lambda_N } \sum_{n=1}^N \widehat f(h_{\lambda_n})h_{\lambda_n}\right|^p.$$
For $\omega\in G$, we define
$$n(\omega)=\inf\left\{l\geq 0:\ \left|e^{-2\delta\lambda_l}\sum_{n=1}^l \widehat f(h_{\lambda_n})h_{\lambda_n}\right|=\sup_N \left|e^{-2\delta\lambda_N}\sum_{n=1}^N \widehat f(h_{\lambda_n})h_{\lambda_n}\right|\right\}$$
which is measurable. For $k\geq 0$, we set
$$A_k=\left\{n:\ \lambda_n\in[k,k+1)\right\},\ G_k=\left\{\omega\in G:\ n(\omega)\in A_k\right\},$$
$$I(\sigma)=\int_G \sup_{\sigma>u}\sup_N \left|\sum_{n=1}^N \widehat f(h_{\lambda_n}) e^{-\sigma\lambda_n } h_{\lambda_n}\right|^p d\omega.$$
We can write
\begin{align*}
I(\sigma)&\leq C(u)^p \sum_{k\geq 0}\int_{G_k}\sup_N \left|e^{-2\delta\lambda_N }\sum_{n=1}^N \widehat f(h_{\lambda_n})h_{\lambda_n}\right|^p d\omega\\
&\leq C(u)^p \sum_{k\geq 0}\int_{G_k}\sup_{N\in A_k} \left|e^{-2\delta\lambda_N}\sum_{n=1}^N \widehat f(h_{\lambda_n})h_{\lambda_n}\right|^p d\omega\\
&\leq C(u)^p \sum_{k\geq 0}\int_{G_k}e^{-2 \delta p k}\sup_{N\in A_k} \left|\sum_{n=1}^N \widehat f(h_{\lambda_n})h_{\lambda_n}\right|^p d\omega\\
&\leq C(u,\lambda)^p \sum_{k\geq 0}e^{-2\delta  p k} e^{\delta p(k+1) }\|f\|_p^p \\
&\leq C(u,\lambda)^p \|f\|_p^p.
\end{align*}
As for the proof of Lemma \ref{lem:maximalRphi}, the proof is easier for $p=\infty$ and is left to the reader.

%We now do the case $p=+\infty$. Since $\lambda$ satisfies Bohr's theorem, by \cite[Theorem 5.1]{DSHelson} $H_\infty^\lambda(G)$ is isometrically equal to $\dinf$: if $f=\sum \widehat f(h_{\lambda_n})h_{\lambda_n}\in H_\infty^\lambda(G)$, then $D$ defined by $D(s)=\sum_n \widehat f(h_{\lambda_n})e^{-\lambda_n s}$ belongs to $\dinf$ and $\|D\|_\infty=\|f\|_\infty$. Moreover, for any $N\geq 1$ and for any $\sigma\geq u$, the function $\sum_{n=1}^N \widehat f(h_{\lambda_n})e^{-\lambda_n\cdot}$ is continuous on $i\RR$ and since $\beta$ has dense range
%$$\sup_{\Red(s)>0}\sup_{\sigma\geq u}\sup_{N}\left|\sum_{n=1}^N \widehat{f}(\lambda_n)e^{-\lambda_n\sigma}e^{-\lambda_n s}\right|=\left\|\sup_{\sigma\geq u}\sup_{N}\left|\sum_{n=1}^N \widehat{f}(\lambda_n)e^{-\lambda_n\sigma} h_{\lambda_n}\right|\ \right\|_{L_\infty(G)}.$$
%Using again Lemma 3.4 of \cite{DSHelson} and setting $\delta=u/2$, this yields
%\begin{align*}
%\left\|\sup_{\sigma\geq u}\sup_{N}\left|\sum_{n=1}^N \widehat{f}(\lambda_n)e^{-\lambda_n\sigma} h_{\lambda_n}\right|\ \right\|_{L_\infty(G)}&\leq C(u)\sup_{\Red(s)> 0}\sup_N \left|e^{-\lambda_N\delta}\sum_{n=1}^N \widehat{f}(\lambda_n)h_{\lambda_n}e^{-\lambda_n s}\right|\\
%&\leq C(u)\sup_N \sup_{\Red(s)>0} \left|e^{-\lambda_N \delta}\sum_{n=1}^N \widehat f(h_{\lambda_n})e^{-\lambda_n s}\right|\\
%&\leq C(u,\lambda) \|D\|_\infty=C(u,\lambda)\|f\|_\infty
%\end{align*}
%where the last inequality comes from Theorem \ref{thm:BohrNC} since $\lambda$ satisfies (NC). Finally, the general case $p\in (1,+\infty)$ follows by interpolation.
\end{proof}

If we analyze the previous proof carefully, we observe that we have obtained the following (slightly stronger) variant of Theorem \ref{thm:CarlesonHunt}.
\begin{corollary}
Let $\lambda$ satisfying (NC). For all $\delta>0$, there exists $C>0$ such that, for all $p\geq1$, for all $f\in H_p^\lambda(G)$, 
$$\left(\int_G \sup_N \left|\frac{S_N f(\omega)}{e^{\delta\lambda_N}}\right|^p d\omega\right)^{1/p}\leq C(\delta)\|f\|_p.$$
\end{corollary}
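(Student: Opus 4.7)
The plan is to extract this estimate directly from the argument already used to prove Theorem~\ref{thm:Helson}. In that proof, the bulk of the work was devoted to bounding exactly the quantity $\|\sup_N|e^{-2\delta\lambda_N}S_Nf|\|_p$ by a $p$-independent constant; the appeal to Lemma~3.4 of \cite{DSHelson} served only to convert a supremum in $\sigma$ into a supremum in $N$. For the present corollary no such conversion is needed, so the strategy is to isolate and reuse that part of the argument, with the weight $e^{-\delta\lambda_N}$ now built into the statement itself.

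First I would fix $\delta>0$, take $f\in\pollambda$ (so the suprema below are finite maxima, and the argmin below is well-defined and measurable), and set
$$n(\omega):=\inf\left\{l\geq 0:\ \left|e^{-\delta\lambda_l}S_lf(\omega)\right|=\sup_N\left|e^{-\delta\lambda_N}S_Nf(\omega)\right|\right\}.$$
I then partition $G=\bigsqcup_{k\geq 0}G_k$ using $A_k:=\{n:\lambda_n\in[k,k+1)\}$ and $G_k:=\{\omega:n(\omega)\in A_k\}$. On $G_k$ one has $e^{-\delta\lambda_{n(\omega)}}\leq e^{-\delta k}$, so
$$\int_G\sup_N\left|\frac{S_Nf(\omega)}{e^{\delta\lambda_N}}\right|^p d\omega\leq \sum_{k\geq 0}e^{-\delta pk}\int_G\sup_{N\in A_k}|S_Nf(\omega)|^p d\omega.$$

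The key step is now to invoke Theorem~\ref{thm:CarlesonHunt} with parameter $\delta/2$ in place of $\delta$: since indices $N\in A_k$ satisfy $\lambda_N<k+1$, it yields
$$\int_G\sup_{N\in A_k}|S_Nf|^p d\omega\leq C(\delta/2)^p e^{\delta p(k+1)/2}\|f\|_p^p,$$
with a constant independent of $p$. Substituting and summing the geometric series gives
$$\int_G\sup_N\left|\frac{S_Nf(\omega)}{e^{\delta\lambda_N}}\right|^p d\omega\leq C(\delta/2)^p e^{\delta p/2}\|f\|_p^p\cdot\frac{1}{1-e^{-\delta p/2}}.$$
Taking the $p$-th root, the block $C(\delta/2)^p e^{\delta p/2}$ yields $C(\delta/2)e^{\delta/2}$, and $(1-e^{-\delta p/2})^{-1/p}\leq\max(1,(1-e^{-\delta/2})^{-1})$ for all $p\geq 1$, so the overall constant depends only on $\delta$. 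A density argument then extends the inequality to arbitrary $f\in H_p^\lambda(G)$ for $p<\infty$.

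For $p=\infty$ density is unavailable, but the argument is easier: Theorem~\ref{thm:CarlesonHunt} with parameter $\delta/2$ gives, for each $k$ and almost every $\omega$ (up to a countable union of null sets), $\sup_{N\in A_k}|S_Nf(\omega)|\leq Ce^{\delta(k+1)/2}\|f\|_\infty$, whence
$$\sup_N\left|e^{-\delta\lambda_N}S_Nf(\omega)\right|\leq \sup_{k\geq 0}e^{-\delta k}\cdot Ce^{\delta(k+1)/2}\|f\|_\infty\leq Ce^{\delta/2}\|f\|_\infty.$$
The only delicate point is the uniformity in $p$: it works precisely because the constant in Theorem~\ref{thm:CarlesonHunt} is $p$-free and because $(1-e^{-\delta p/2})^{-1/p}$ stays bounded as $p$ ranges over $[1,\infty]$. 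No genuinely new obstacle appears.
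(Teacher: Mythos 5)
Your proof is correct and is essentially the paper's own argument: the corollary is obtained there by re-reading the proof of Theorem~\ref{thm:Helson}, which already contains exactly your argmax/partition into the sets $G_k$, the bound $e^{-\delta\lambda_{n(\omega)}}\leq e^{-\delta k}$, and the appeal to Theorem~\ref{thm:CarlesonHunt} with a strictly smaller parameter (there the weight is $e^{-2\delta\lambda_N}$ against Carleson--Hunt at $\delta$, which is your $\delta$ versus $\delta/2$ up to relabeling), followed by the geometric sum with $p$-uniform constants. Your handling of polynomials, the density step for $p<\infty$, and the easier $p=\infty$ case matches the paper's (mostly implicit) treatment.
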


When $\lambda$ satisfies (BC), it is possible to improve this inequality.
\begin{proposition}
Let $\lambda$ satisfy (BC). For all $\alpha>1$, there exists $C>1$ such that, for all $p\geq 1$, for all $f\in H^\lambda_p(G)$, 
$$\left(\int_G\sup_N \left|\frac{S_N f(\omega)}{\lambda_N^\alpha}\right|^p d\omega\right)^{1/p} \leq C(\alpha) \|f\|_p.$$
\end{proposition}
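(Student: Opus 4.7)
The strategy is to enrich $\lambda$ into a denser frequency $\lambda'\supset\lambda$ with consecutive gaps at most $1$, to sharpen the maximal inequality of Lemma \ref{lem:maximalRphi} on $\lambda'$ under (BC) to a linear weight $\lambda'_N$ in place of the exponential weight $e^{\delta\lambda'_N}$, and then to convert this via a dyadic decomposition into the polynomial-weighted maximal inequality. The result for the original $\lambda$ follows by the embedding $H_p^\lambda(G)\subset H_p^{\lambda'}(G)$.

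Following the construction in Lemma \ref{lem:superNC}, insert enough points between consecutive terms of $\lambda$ to produce $\lambda\subset\lambda'$ with $\lambda'_{n+1}-\lambda'_n\leq 1$ for every $n$. Then $\lambda'$ still satisfies (BC) (with a slightly modified constant), and $\log\lambda'_{n+1}\leq\log(\lambda'_n+1)=O(\log\lambda'_n)$. If $(G,\beta)$ is a $\lambda'$-Dirichlet group it is automatically a $\lambda$-Dirichlet group, and for any $f\in H_p^\lambda(G)\subset H_p^{\lambda'}(G)$ and every $N$ the $\lambda$-partial sum $S_N^\lambda f$ equals the $\lambda'$-partial sum $S_{k_N}^{\lambda'}f$, where $k_N$ is the index of $\lambda_N$ in $\lambda'$. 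Hence
$$\sup_N\frac{|S_N^\lambda f(\omega)|}{\lambda_N^\alpha}\leq\sup_M\frac{|S_M^{\lambda'}f(\omega)|}{(\lambda'_M)^\alpha},$$
reducing the problem to proving the inequality for $\lambda'$.

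Now take $m(n)=n+1$ in the construction of Lemma \ref{lem:maximalRphi}: with $h'_n=(\lambda'_{n+1}-\lambda'_n)/2$ and $\phi'_n=\psi_{\lambda'_n,h'_n}$, this choice makes $\widehat{\phi'_n}$ equal to $1$ at every $\lambda'_k$ with $k\leq n$ and to $0$ at every $\lambda'_k$ with $k\geq n+1$, so $R_{\phi'_n}=S_n^{\lambda'}$ identically. By (BC) for $\lambda'$, $h'_n\geq (C/2)e^{-\ell\lambda'_n}$, while the upper bound on consecutive gaps gives $\lambda'_n+h'_n\leq\lambda'_n+1$; one may therefore take $\kappa=O(e^{\ell\lambda'_N})$ in Lemma \ref{lem:L1norm} for the full range $n\leq N$, with $\log\kappa=O(\lambda'_N)$. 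Repeating the argument of Lemma \ref{lem:maximalRphi} with this sharpened bound yields
$$\left(\int_G\sup_{n\leq N}|S_n^{\lambda'}f(\omega)|^p\,d\omega\right)^{1/p}\leq C\lambda'_N\|f\|_p,\qquad p\in[1,\infty],\ f\in L_p(G),$$
with $C$ independent of $p$. Decomposing dyadically with $B'_k=\{n:2^k\leq\lambda'_n<2^{k+1}\}$ and applying Minkowski,
$$\left\|\sup_M\frac{|S_M^{\lambda'}f|}{(\lambda'_M)^\alpha}\right\|_p\leq\sum_{k\geq 0}2^{-k\alpha}\left\|\sup_{n\in B'_k}|S_n^{\lambda'}f|\right\|_p\leq C\|f\|_p\sum_{k\geq 0}2^{k(1-\alpha)+1}=C(\alpha)\|f\|_p,$$
the series converging precisely when $\alpha>1$.

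The main obstacle is establishing the linear maximal inequality on $\lambda'$: under (BC) for the original $\lambda$, one can have $\log\lambda_{n+1}\gg\lambda_n$, and the naive estimate $\|\phi_n\|_1=O(\log\lambda_{n+1}+\lambda_n)$ would contribute an unsummable $\log\lambda_{N+1}$ term to the dyadic decomposition. The enrichment step is designed precisely to kill this term by forcing $\lambda'_{n+1}\leq\lambda'_n+1$, so that the only exponential blow-up in $\kappa$ comes from the lower bound on $h'_n$ provided by (BC).
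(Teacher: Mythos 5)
Your proof is correct and follows essentially the same route as the paper: enrich $\lambda$ so that consecutive terms are close (the paper phrases this as ``adding terms if necessary'' so that $\log(\lambda_{n+1}+\lambda_n)\leq C\lambda_n$), use (BC) with $m(n)=n+1$ in Lemma \ref{lem:L1norm}/Lemma \ref{lem:maximalRphi} to get the $p$-uniform maximal bound $\|\sup_{n\leq N}|S_nf|\|_p\leq C\lambda_N\|f\|_p$, and then sum a geometric series over dyadic blocks $\lambda_N\in[2^k,2^{k+1})$, which is where $\alpha>1$ enters. The only (cosmetic) difference is that you conclude via Minkowski's inequality over the blocks while the paper integrates over the level sets $G_k$ of the maximizing index $n(\omega)$; both give a constant independent of $p$.
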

\begin{proof}
We just sketch the proof. If $\lambda$ satisfies (BC), then we know that there exists $C>0$ such that, for all $n\in\NN$, $\log(\lambda_{n+1}-\lambda_n)\geq -C\lambda_n$. Adding terms if necessary, we can also assume that $\log(\lambda_{n+1}+\lambda_n)\leq C\lambda_n$. Arguing exactly as in the proof of Theorem \ref{thm:CarlesonHunt}, we can prove the existence of $C>0$ such that, for all $f\in H_p^\lambda(G)$, for all $n\in\NN$, 
$$\int_G \sup_{n\leq N}|S_nf(\omega)|^p d\omega\leq C\lambda_N^p.$$
Let now $\alpha>1$, fix $f\in \textrm{Pol}_\lambda(G)$ and define, for $\omega\in G$, 
\begin{align*}
n(\omega)&=\inf\left\{ l\geq 0:\ \left|\lambda_l^{-\alpha}\sum_{n=1}^l \widehat f(h_{\lambda_n}) h_{\lambda_n}\right|=\sup_N \left|\lambda_N^{-\alpha}\sum_{n=1}^N \widehat f(h_{\lambda_n}) h_{\lambda_n}\right|\right\}\\
A_k&=\{n:\ \lambda_n\in [2^k,2^{k+1})\}\\
G_k&=\{w:\ n(\omega)\in A_k\}.
\end{align*}
Then
\begin{align*}
\int_G \sup_N \left|\frac{S_N f(\omega)}{\lambda_N^\alpha}\right|^p d\omega&=\sum_k \int_{G_k} \sup_N \left|\frac{S_N f(\omega)}{\lambda_N^\alpha}\right|^p d\omega\\
&=\sum_k \int_{G_k} \sup_{N\in A_k} \left|\frac{S_N f(\omega)}{\lambda_N^\alpha}\right|^p d\omega\\
&\leq \sum_k 2^{-pk\alpha}\int_G \sup_{\lambda_N\leq 2^{k+1}}|S_N f(\omega)|^p d\omega\\
&\leq C\sum_k 2^{-pk\alpha}2^{p(k+1)}\|f\|_p^p.
\end{align*}
\end{proof}

\begin{question}
We know that $\lambda$ satisfies Bohr's theorem if and only if for all $\delta>0$, there exists $C>0$ such that, for all $f\in H_\infty^\lambda(G)$, for all $N\geq 1$,
\begin{equation}\label{eq:q1}
\left\|\sup_{n\leq N} \left|\sum_{k=1}^n \widehat f(h_{\lambda_k}) h_{\lambda_k}(\omega)\right|\ \right\|_{L_\infty(G)}\leq Ce^{\delta\lambda_N}\|f\|_\infty.
\end{equation}
We have shown that if $\lambda$ satisfies (NC), then it satisfies the previous inequality.
To prove that Helson's theorem is satisfied (and even to prove that the relevant maximal inequality holds true), it is sufficient to prove that, for all $\delta>0$, there exists $C>0$ such that, for all $f\in H_1^\lambda(G)$, for all $N\geq 1$,
\begin{equation}
\label{eq:q2}
\left\|\sup_{n\leq N} \left|\sum_{k=1}^n \widehat f(h_{\lambda_k}) h_{\lambda_k}(\omega)\right|\ \right\|_{L_1(G)} \leq Ce^{\delta\lambda_N}\|f\|_1.
\end{equation}
Again we have shown that if $\lambda$ satisfies (NC), then \eqref{eq:q2} is true. It seems natural to ask whether
\eqref{eq:q2} always follows from \eqref{eq:q1} or, equivalently, if any frequency $\lambda$ satifying Bohr's theorem also satisfies Helson's theorem. Inequalities in $\mathcal H_1(\lambda)$ have already been deduced for their vector-valued counterpart in $\mathcal H_\infty(\lambda)$ in \cite{DS19}. At first glance, it seems that this argument cannot be applied here.
\end{question}

\subsection{Failure of Helson's theorem for $p=1$}
Since for $p>1$, for any frequency $\lambda$, for any $(G,\beta)$ a $\lambda$-Dirichlet group, for any $g\in H_p^\lambda(G)$, the series $\sum_{n=1}^{+\infty}\hat f(h_{\lambda_n})h_{\lambda_n}$ converges almost everywhere on $G$ (this follows from the Carleson-Hunt theorem of \cite{DSHelson}), it is natural to ask whether Theorem \ref{thm:Helson} remains true without any assumption on $\lambda$. We show that this is not the case.
\begin{theorem}\label{thm:failureHelson}
There exists a frequency $\lambda$, a $\lambda$-Dirichlet group $(G,\beta)$ and $f\in H_1^\lambda(G)$ such that, for all $u>0$, the series $\sum_{n=1}^{+\infty}\hat f(h_{\lambda_n})e^{-u\lambda_n}h_{\lambda_n}$ diverges almost everywhere on $G$. 
\end{theorem}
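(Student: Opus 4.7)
The strategy is to construct a very dense frequency $\lambda$ organized in short blocks, together with $f\in H_1^\lambda(G)$ whose spectrum is distributed block by block as a Kolmogorov-type trigonometric polynomial. The central observation is that if the $k$-th block of $\lambda$ lies in a narrow interval $I_k=[a_k,a_k+\delta_k]$ with $\delta_k\to 0$, then for any \emph{fixed} $u>0$ the damping factor $e^{-u\lambda_n}$ is essentially constant across $I_k$ (variation only $1+O(u\delta_k)$), so bad behaviour created inside the block survives the damping. Combined with a lower bound on the ``bad'' partial sum which grows as $\log N_k$ (where $N_k$ is the block size), this will produce the required divergence at every $u>0$ simultaneously.

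\textbf{Construction.} I would pick $a_k=k$, $\delta_k=2^{-k}$, huge integers $N_k$ with $\log N_k /a_k\to\infty$ (say $\log N_k=k^2$), and a small rational $\gamma_k\leq \delta_k/N_k$; place the arithmetic progression $\mu_j^k:=a_k+j\gamma_k$ ($0\leq j<N_k$) in $I_k$; let $\lambda$ be the increasing enumeration of $\bigcup_k\{\mu_j^k\}$ and $(G,\beta)$ be the Bohr compactification of $\RR$. The point of the progression is that $h_{\mu_j^k}(\omega)=h_{a_k}(\omega)\cdot z_k(\omega)^j$, where $z_k(\omega):=h_{\gamma_k}(\omega)\in\TT$; since $h_{\gamma_k}:G\to\TT$ pushes Haar to Haar, any polynomial $Q_k=\sum_j c_j h_{\mu_j^k}$ satisfies $\|Q_k\|_{L^p(G)}=\|\widetilde Q_k\|_{L^p(\TT)}$ and $|S_i Q_k(\omega)|=|S_i\widetilde Q_k(z_k(\omega))|$ for every partial-sum index $i$, where $\widetilde Q_k(z)=\sum_j c_j z^j$. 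I can therefore import a classical Kolmogorov polynomial: $\widetilde Q_k$ of degree $N_k-1$, $\|\widetilde Q_k\|_1\leq 1$, with $\sup_{i<N_k}|S_i\widetilde Q_k|\geq c\log N_k$ on a set of $\TT$ of measure $\geq 1/2$; by placing several disjoint translated sub-progressions inside the same $I_k$ one enlarges the bad set $E_k\subset G$ to measure $\geq 1-2^{-k}$. Finally set $f:=\sum_{k\geq 1}2^{-k}Q_k\in H_1^\lambda(G)$, so $\|f\|_1\leq 1$.

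\textbf{Divergence.} Fix $u>0$. For $N$ realising the bad partial sum within block $k$, splitting the sum between previously completed blocks and the truncated block $k$ and using Abel summation to handle the small variation of $e^{-u\lambda_n}$ on $I_k$ gives
\begin{equation*}
\bigl|S_N^u f(\omega)\bigr|\geq c\,2^{-k}e^{-u a_k}\log N_k-(1+u\delta_k)\sum_{j<k}2^{-j}\bigl|\widetilde{Q_j^u}(\omega)\bigr|,
\end{equation*}
where $\widetilde{Q_j^u}$ is the fully damped version of $Q_j$, a fixed $L^1$-function of norm $\leq \|Q_j\|_1\leq 1$. The error inside the block is bounded by $u\delta_k\sup_i|S_iQ_k(\omega)|=o(1)\cdot\log N_k$, and by Beppo Levi $\sum_j 2^{-j}|\widetilde{Q_j^u}(\omega)|<\infty$ a.e.; on the other hand $2^{-k}e^{-u a_k}\log N_k\to+\infty$ by the choice $\log N_k/a_k\to\infty$. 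Borel--Cantelli applied to $\sum_k\mu(E_k^c)\leq\sum 2^{-k}<\infty$ shows a.e.\ $\omega$ lies in $\liminf_k E_k$, and witnesses divergence at the chosen $u$. By the Abel-type theorem for general Dirichlet series (convergence at $u_0$ propagates to every larger $u$, equivalently divergence at $u_0$ propagates to every $u<u_0$), it is enough to ensure a.e.\ divergence along the countable sequence $u_m=m$; intersecting full-measure sets gives a.e.\ divergence for every $u>0$.

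\textbf{Main obstacle.} The delicate point is the construction of $Q_k$ with bad set of measure $\geq 1-2^{-k}$ (rather than the $\geq 1/2$ delivered directly by Kolmogorov). Concatenating rotated copies in disjoint sub-progressions of $I_k$ forces one to control the contribution of the already completed sub-blocks to the partial sums, and this has to be balanced against the logarithmic lower bound; a careful choice of sub-block sizes and of the way rotations cover $\TT$ (in number growing with~$k$) is needed, and it relies crucially on the freedom to take $N_k$---and hence the number of rotations---arbitrarily large. The other steps (Abel summation inside a shrinking block, and the push-forward identification via $h_{\gamma_k}$) are elementary.
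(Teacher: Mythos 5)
Your overall strategy is essentially the paper's: narrow blocks of arithmetic progressions carrying Kolmogorov-type polynomials, chosen so that the damping $e^{-u\lambda_n}$ is nearly constant across each block, and summed with geometric weights to stay in $H_1^\lambda(G)$ (the paper takes blocks $\mu_j+k\veps_j$, $0\le k\le d_j$, on $G=\prod_j\TT^2$); your reduction to countably many values $u_m=m$ via the Abel/monotonicity propagation of convergence is a legitimate alternative to the paper's device of building the estimate uniformly in $u\in[0,j]$ into the construction. But there is a genuine quantitative error: with coefficients $2^{-k}$, block positions $a_k=k$ and $\log N_k=k^2$, your key lower bound $c\,2^{-k}e^{-ua_k}\log N_k=c\,2^{-k}e^{-uk}k^2$ tends to $0$, not to $+\infty$, so no divergence follows; the criterion ``$\log N_k/a_k\to\infty$'' is not the right one. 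You need $\log N_k$ to beat $2^{k}e^{ua_k}$ for every $u>0$, i.e.\ doubly exponential growth of $N_k$ — this is precisely why the paper demands that the bad partial sums of the $j$-th block exceed $je^{j\mu_j}$ (condition (e) of its construction). Since $N_k$ is a free parameter the fix is easy, but as stated the estimate fails.

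The second gap is the step you yourself flag as the ``main obstacle'': upgrading the bad set from measure $\ge 1/2$ to measure $\ge 1-2^{-k}$. In your setup this is indispensable and is not carried out: because your $\gamma_k$ are rational and your $a_k$ integers, the characters $h_{\gamma_k}$ on the Bohr compactification satisfy many multiplicative relations, so no independence across blocks is available and you cannot replace summable complement measures by a second Borel--Cantelli argument; the concatenation-of-rotated-copies sketch is not a proof. Fortunately the obstacle is illusory: Kolmogorov's classical lemma (as quoted in the paper, following Zygmund) already gives, for any $A,\delta>0$, a polynomial $P$ with $\|P\|_1\le\delta$ whose partial sums exceed $A$ on a set of measure $\ge 1-\delta$, so no boosting is needed; alternatively one can attach an independent coordinate $w_j$ to each block, as the paper does with $G=\prod_j\TT^2$ and $f_j(\omega)=w_jP_j(z_j)$ (which also lets one use the Cauchy criterion on block differences and dispense with controlling the earlier blocks). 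With these two repairs — doubly exponential $N_k$ and the full-strength Kolmogorov lemma (or an independent factor per block) — your argument becomes a correct proof along essentially the same lines as the paper's.
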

As we might guess, the proof will use the results of Kolmogorov on a.e. divergent Fourier series in $L_1(\TT)$ (see for instance \cite{Zygmund}).

\begin{lemma}\label{lem:Kolmogorov}
Let $A,\delta>0$. There exists $P\in H_1(\TT)$ a polynomial and $E\subset\TT$ measurable such that 
\begin{itemize}
\item $\|P\|_1\leq\delta$.
\item $m_\TT(E)\geq 1-\delta$ (here, $m_\TT$ denotes the Lebesgue measure on $\TT$).
\item for all $z\in E$, there exists $n(z)\in\NN$ such that $|S_{n(z)} P(z)|\geq A$.
\end{itemize}
\end{lemma}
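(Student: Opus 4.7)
This is a Kolmogorov-type result tailored to $H_1(\TT)$: we need a small-norm \emph{analytic} polynomial whose one-sided Fourier partial sums blow up on a large set. My plan is to reduce it to the classical Kolmogorov theorem via an elementary frequency-shift trick, avoiding any need to redo the Kolmogorov construction with analytic building blocks.

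First, I would invoke the classical Kolmogorov theorem in its quantitative polynomial form (see \cite[Ch.~VIII, \S3]{Zygmund}): for the prescribed $A,\delta>0$, there exists a real trigonometric polynomial $Q$ of some degree $d$ with $\|Q\|_1 \leq \delta$ and a measurable set $E \subset \TT$ with $m_\TT(E) \geq 1-\delta$, together with an integer-valued function $n:E \to \{0,\dots,d\}$, such that the symmetric partial sum $S_n Q(z) := \sum_{|k|\leq n}\widehat Q(k)z^k$ satisfies $|S_{n(z)} Q(z)| \geq 2A$ for every $z \in E$. Now set $P(z) := z^d Q(z)$. Its Fourier spectrum lies in $\{0,1,\dots,2d\}$, so $P \in H_1(\TT)$; since $|z^d|=1$ on $\TT$, we have $\|P\|_1 = \|Q\|_1 \leq \delta$.

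The crux is the following identity linking the one-sided partial sums of $P$ to the symmetric ones of $Q$. Using $\widehat P(j) = \widehat Q(j-d)$ for $0 \leq j \leq 2d$ (and $0$ otherwise), a direct re-indexation yields, for each $0 \leq n \leq d$,
$$S_{d+n} P(z) - S_{d-n-1} P(z) \;=\; z^d\, S_n Q(z),$$
with the convention $S_{-1} P := 0$. Hence by the triangle inequality, $\max\bigl(|S_{d+n}P(z)|,\,|S_{d-n-1}P(z)|\bigr) \geq \tfrac{1}{2}|S_n Q(z)|$, so applying this with $n = n(z)$ for each $z \in E$ produces an index $m(z) \in \{d-n(z)-1,\,d+n(z)\}$ with $|S_{m(z)} P(z)| \geq A$, as required.

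No serious obstacle arises: once the (elementary) partial-sum identity is in hand, the result reduces immediately to the classical Kolmogorov theorem. The only mild point to check is that the version in \cite{Zygmund} indeed delivers a trigonometric polynomial (not merely an $L^1$-function), but this is standard since Kolmogorov's construction proceeds through explicit trigonometric polynomial building blocks (weighted sums of Dirichlet kernels).
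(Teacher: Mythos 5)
Your proof is correct. Note that the paper never actually proves this lemma: it is stated as a repackaging of Kolmogorov's construction of an a.e.\ divergent Fourier series in $L_1(\TT)$, with a pointer to \cite{Zygmund}, and the proof environment following it in the paper is really the proof of Theorem \ref{thm:failureHelson}, which uses the lemma as a black box. Your argument supplies exactly the detail that this citation leaves implicit: Kolmogorov's example is a two-sided trigonometric polynomial with large \emph{symmetric} partial sums, and since the Riesz projection is unbounded on $L_1(\TT)$ one cannot simply project it into $H_1(\TT)$; the standard remedy is your frequency shift $P(z)=z^{d}Q(z)$ together with the identity $S_{d+n}P(z)-S_{d-n-1}P(z)=z^{d}S_nQ(z)$ (with the convention $S_{-1}P=0$), which checks out by the re-indexation you indicate, and the factor-$\tfrac12$ loss is absorbed by asking $2A$ of $Q$. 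Two small points worth making explicit: (i) the quantitative polynomial form of Kolmogorov's theorem you invoke is indeed standard, since the classical construction produces nonnegative trigonometric polynomials of $L_1$-norm $1$ whose symmetric partial sums exceed an arbitrarily large $M$ on a set of measure arbitrarily close to full, and multiplying by $\delta$ and choosing $M\geq 2A/\delta$ gives what you use; (ii) if the index realizing your lower bound were $d-n(z)-1\in\{-1,0\}$, this would force $A\leq\max(0,\delta)$, so after the harmless normalization $\delta<A$ (the statement for smaller $\delta$ is stronger) the selected index is a genuine positive integer, matching the requirement $n(z)\in\NN$; this is consistent with how the lemma is used later in the paper, where indices $0\leq n_j(z)\leq d_j$ are allowed.
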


\begin{proof}
By induction on $j\geq 1$, we construct a sequence of holomorphic polynomials $(P_j)$ with $\deg(P_j)=d_j$, two sequences of positive real numbers $(\mu_j)$ and $(\veps_j)$ and a sequence $(E_j)$ of measurable subsets of $\TT$ such that the following properties are true for each $j$:
\begin{enumerate}[(a)]
\item $m_\TT(E_j)\geq 1-2^{-j}$
\item $\|P_j\|_1\leq 2^{-j}$
\item $\mu_j>\mu_{j-1}+d_{j-1}\veps_{j-1}$
\item the real numbers $2\pi, \mu_1,\dots,\mu_j,\veps_1,\dots,\veps_j$ are $\QQ$-independent
\item for each $z\in E_j$, there exists an integer $n_j(z)$ such that, for all $u\in [0,j]$, 
$$\left|\sum_{k=0}^{n_j(z)} \widehat {P_j}(k) e^{-uk\veps_j}z^k\right|\geq je^{j\mu_j}.$$
\end{enumerate}
Let us proceed with the construction. We choose for $\mu_j$ any real number such that $\mu_j>\mu_{j-1}+d_{j-1}\veps_{j-1}$ and the real numbers $2\pi,\mu_1,\dots,\mu_j,\veps_1,\dots,\veps_{j-1}$ are independent over $\QQ$ (when $j=1$, we simply choose $\mu_1>2\pi$ with $(2\pi,\mu_1)$ independent over $\QQ$). We then apply Lemma \ref{lem:Kolmogorov} with $A=(j+1)e^{j\mu_j}$ and $\delta=2^{-j}$ to get a polynomial $P_j$ with degree $d_j$ and a subset $E_j\subset\TT$ satisfying (a) and (b). Since the functions $(u,z)\mapsto \sum_{k=0}^{n} \widehat P_j(k) e^{-uk\veps}z^k$, for $0\leq n\leq d_j$, converge uniformly on $[0,j]\times\TT$ to $(u,z)\mapsto S_n P(z)$ as $\veps\to 0$, we may choose $\veps_j$ a sufficiently small positive real number such that (d) and (e) are satisfied.

Define now $\lambda=\{\mu_j+k\veps_j:\ j\geq 1,\ 0\leq k\leq d_j\}$, $G=\prod_{j=1}^{+\infty}\TT^2$ endowed with the canonical product structure and define $\beta:(\RR,+)\to G,\ t\mapsto \big( (e^{-it\mu_j},e^{-it\veps_j})\big)_j$. By (d) and Kronecker's theorem, the homomorphism $\beta$ has dense range. Moreover, let $\lambda_n\in\lambda$. Then $\lambda_n=\mu_j+k\veps_j$ for some $j\geq 1$ and some $0\leq k\leq d_j$. Write an element $\omega\in G$ as $\prod_{l=1}^{+\infty}(w_l,z_l)$ and define $h_{\lambda_n}(\omega)=w_j z_j^k$. Then 
$$h_{\lambda_n}\circ\beta(t)=e^{-it(\mu_j+k\veps_j)}=e^{-i\lambda_n t}$$
so that $(G,\beta)$ is a $\lambda$-Dirichlet group. Now, define
$$f_j(\omega)=w_j P_j(z_j)=\sum_{k=0}^{d_j} \widehat{P_j}(k)h_{{\mu_j+k\veps_j}}(\omega).$$
We get $\|f_j\|_{H_1^{\lambda}(G)}=\|P_j\|_{H_1(\TT)}$ so that the series $f=\sum_{j\geq 1}f_j$ converges in $H_1^\lambda(G)$. Let us also define
$$F_j=\{\omega\in G:\ z_j\in E_j\}.$$
Then $m_G(F_j)=m_{\TT}(E_j)\geq 1-2^{-j}$ (here, $m_G$ denotes the Haar measure on $G$). Thus, if we set $F=\bigcap_{j_0\geq 1}\bigcup_{j\geq j_0}F_j$, then $m_G(F)=1$. Pick now $\omega\in F$. We may find $j$ as large as we want such that $\omega\in F_j$. The construction of $P_j$ ensures that there exists $0\leq n_j(z_j)\leq d_j$ such that, for all $u\in [0,j]$, 
$$\left|\sum_{k=0}^{n_j(z_j)} \widehat{P_j}(k) e^{-u(\mu_j+k\veps_j)}z_j^k \right|\geq j.$$
Setting $N$, resp. $M$, such that $\lambda_N=\mu_{j-1}+d_{j-1}\veps_{j-1}$, resp. $\lambda_M=\mu_j+n_j(z_j)\veps_j$, the previous inequality translates into
$$\left|\sum_{n=N+1}^{M} \hat f(h_{\lambda_n}) e^{-u\lambda_n} h_{\lambda_n}(\omega)\right|\geq j.$$
This easily yields the a.e. divergence of $\sum_{n=1}^{+\infty}\widehat f(h_{\lambda_n}) e^{-u\lambda_n} h_{\lambda_n}$.
\end{proof}

\subsection{Maximal inequalities for mollifiers}

Since $(S_n f)$ does not necessarily converge pointwise or even in norm for all functions in $H_1^\lambda(G)$, Defant and Schoolmann looked in \cite{DSRiesz} for a substitute by changing the summation method. They succeeded by choosing Riesz means.  Precisely they showed (see \cite[Theorem 2.1]{DSRiesz}), through a maximal inequality, that for all frequencies $\lambda$, for all $f\in H_\lambda^1(G)$, for all $\alpha>0$, the sequence $(R_N^{\lambda,\alpha}(f)(\omega))$ converges to $f(\omega)$ for almost all $\omega\in G$. We extend this to a large class of mollifiers.

\begin{theorem}\label{thm:maxRiesz}
Let $\lambda$ be a frequency,  $(G,\beta)$ a $\lambda$-Dirichlet group. 
Let $\psi\in L_1(\RR)$ be a continuous function (except at a finite number of points) such that $\widehat \psi$ has compact support and there exists a nonincreasing function $g\in L_1(0,+\infty)$ such that $|\psi(x)|\leq g(|x|)$ for all $x\in\RR$. For $N\geq 1$, define $\psi_N(\cdot)=\psi(\cdot/N)$. Then 
$$R_{\max,\psi}(f):=\sup_N |R_{\psi_N}(f)|$$
defines a bounded sublinear operator from $H_1^\lambda(G)$ into $L_{1,\infty}(G)$. Moreover, if $\int \psi=1$, then for all $f\in H_1^\lambda(G)$, $R_{\psi_N}(f)(\omega)$ converges for almost every $\omega\in G$ to $f(\omega)$.
\end{theorem}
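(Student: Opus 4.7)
This follows the classical template for maximal estimates of approximate identities, adapted to the group setting. By Saksman's vertical convolution formula (Theorem \ref{thm:saksman}(b)), for $f\in H_1^\lambda(G)$ and a.e.\ $\omega\in G$,
$$R_{\psi_N}(f)(\omega)=\int_{\RR}f_\omega(t)\,\psi_N(t)\,dt.$$
The plan is to dominate $\sup_N|R_{\psi_N}(f)|$ pointwise by a weak-type $(1,1)$ maximal operator on $G$ and then derive a.e.\ convergence via density of polynomials.

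\textbf{Step 1: pointwise domination by a vertical HL maximal operator.} The hypothesis $|\psi(x)|\le g(|x|)$ with $g$ nonincreasing and integrable is exactly the classical \emph{radially decreasing integrable majorant} condition. Using the layer-cake representation
$$g(r)=\int_0^\infty \mathbf 1_{\{g>\lambda\}}(r)\,d\lambda,$$
each level set $\{g>\lambda\}$ is an interval $[0,r_\lambda)$, and Fubini together with $\int_0^\infty r_\lambda\,d\lambda=\|g\|_{L_1(0,+\infty)}$ yields
$$\Bigl|\int_{\RR}f_\omega(t)\psi_N(t)\,dt\Bigr|\;\le\;2\|g\|_{L_1}\;M_R f(\omega),\qquad M_R f(\omega):=\sup_{r>0}\frac 1{2r}\int_{-r}^{r}|f(\omega\beta(t))|\,dt.$$
(The maximal function sees $f_\omega$ only through its local averages.) Taking the sup over $N$, one obtains $R_{\max,\psi}(f)\le C_\psi\,M_R f$ pointwise a.e.

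\textbf{Step 2: weak type $(1,1)$ of $M_R$ on $L_1(G)$.} This is the main analytic obstacle, because the slices $f_\omega$ are only \emph{locally} integrable on $\RR$ and not in $L_1(\RR)$; a naive application of the Hardy--Littlewood theorem fails. To bypass this, I introduce the truncated version $M_R^T f(\omega):=\sup_{0<r\le T}\frac 1{2r}\int_{-r}^{r}|f(\omega\beta(t))|\,dt$. Observe $M_R^T f(\omega\beta(s))=M^T f_\omega(s)$, with $M^T$ the standard truncated HL maximal on $\RR$, and crucially, for $s\in[-S,S]$, the value $M^T f_\omega(s)$ depends only on $f_\omega$ on $[-S-T,S+T]$. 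Using translation invariance of $m_G$ and Fubini,
$$m_G\{M_R^T f>\alpha\}=\frac 1{2S}\int_G\bigl|\{s\in[-S,S]:M^T f_\omega(s)>\alpha\}\bigr|\,d\omega,$$
and the weak type $(1,1)$ of $M^T$ applied to the restriction $f_\omega\mathbf 1_{[-S-T,S+T]}$ gives
$$m_G\{M_R^T f>\alpha\}\le \frac{C}{\alpha}\cdot\frac{2(S+T)}{2S}\|f\|_1.$$
Letting $S\to+\infty$ followed by $T\to+\infty$ (monotone convergence) produces the desired weak type $(1,1)$ estimate for $M_R$. Combined with Step 1, this gives the boundedness of $R_{\max,\psi}$ from $H_1^\lambda(G)$ into $L_{1,\infty}(G)$.

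\textbf{Step 3: a.e.\ convergence.} Assume now $\int\psi=1$. For a polynomial $f=\sum_{k=1}^n a_k h_{\lambda_k}\in\pollambda$, $R_{\psi_N}(f)=\sum_{k=1}^n a_k\widehat\psi_N(\lambda_k)h_{\lambda_k}$ is a finite sum whose coefficients tend to $a_k$ as $N\to\infty$ (since $\widehat\psi(0)=\int\psi=1$ and the dilation makes the relevant values of $\widehat\psi$ approach $\widehat\psi(0)$); hence $R_{\psi_N}(f)\to f$ uniformly on $G$. For general $f\in H_1^\lambda(G)$ use the standard approximation argument: given $\varepsilon>0$ and a polynomial $f_k$ with $\|f-f_k\|_1$ small,
$$\limsup_N|R_{\psi_N}(f)(\omega)-f(\omega)|\le R_{\max,\psi}(f-f_k)(\omega)+|f(\omega)-f_k(\omega)|,$$
and the weak type bound of Step 2 together with Markov's inequality shows the left-hand side is $>\varepsilon$ on a set of measure $O(\|f-f_k\|_1/\varepsilon)$. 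Letting $\|f-f_k\|_1\to 0$ yields a.e.\ convergence. The principal difficulty is Step 2: handling the vertical maximal operator without global integrability of the slices, which is resolved by the double truncation $S,T\to+\infty$.
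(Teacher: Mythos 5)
Your argument is correct and follows the same skeleton as the paper's proof: Saksman's vertical convolution formula to write $R_{\psi_N}(f)(\omega)=f_\omega\star\psi_N(0)$ a.e., pointwise domination by the vertical Hardy--Littlewood maximal operator $\overline Mf(\omega)=\sup_{r>0}\frac1{2r}\int_{-r}^r|f_\omega(t)|\,dt$ via the radially decreasing integrable majorant $g$, a weak type $(1,1)$ bound for that maximal operator, and then the standard density argument with polynomials (where $\widehat\psi(\lambda_k/N)\to\widehat\psi(0)=1$). The one place where you diverge is Step 2: the paper simply quotes the weak type $(1,1)$ boundedness of $\overline M$ from \cite{DSRiesz} (Theorem 2.10 there), whereas you reprove it by a Calder\'on-type transference --- truncating the maximal operator at scale $T$, averaging over translates $\omega\beta(s)$, $s\in[-S,S]$, applying the classical Hardy--Littlewood weak bound to $f_\omega\mathbf 1_{[-S-T,S+T]}$, and letting $S\to\infty$ then $T\to\infty$. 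That argument is correct (the key identities $M^T_Rf(\omega\beta(s))=M^Tf_\omega(s)$ and $\int_G\mathbf 1_{\{M^T_Rf>\alpha\}}(\omega\beta(s))\,d\omega=m_G\{M^T_Rf>\alpha\}$ are exactly what transference needs) and makes the proof self-contained; the paper's version is shorter but rests on the cited result, whose proof is essentially the computation you wrote out. One cosmetic point common to you and the statement: the bound uniform in $N$ and the convergence $\widehat{\psi_N}(\lambda_k)\to1$ both tacitly use the normalized dilation $\psi_N=N\psi(N\cdot)$, i.e.\ $\widehat{\psi_N}=\widehat\psi(\cdot/N)$ (as in Lemma \ref{lem:Bohr1} and the remark following the theorem), rather than the literal $\psi(\cdot/N)$; with that reading your constants $2\|g\|_1$ are indeed independent of $N$ and everything goes through.
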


\begin{proof}
Again, the key point is the vertical convolution formula. Indeed, we know that for a.e. $\omega\in G$,
$$R_{\psi_N}(f)(\omega)=f_\omega\star \psi_N(0).$$
For those $\omega$, using \cite[Theorem 2.1.10 and Remark 2.1.11]{Gramodern},
\begin{align*}
\sup_N |R_{\psi_N}(f)(\omega)|&\leq \sup_N |f_\omega|\star \psi_N(0)\\
&\leq 2\| g\|_1 \overline{M}f(\omega)
\end{align*}
where $\overline M(f)(\omega)=\sup_{T>0}\frac1{2T}\int_{-T}^T |f_\omega(t)|dt$ is the appropriate Hardy-Littlewood maximal operator. Since $\overline M$ maps $H_1^\lambda(G)$ into $L_{1,\infty}(G)$ by \cite[Theorem 2.10]{DSRiesz}, we can conclude about the first assertion of the theorem. The result on a.e. convergence is then a standard corollary of it, using that it is clearly true for polynomials since $\widehat \psi(0)=1$.
\end{proof}
\begin{remark}
We can replace the assumption that $\psi$ is compactly supported by the assumption that, for all $N\geq 1$, $\sum_n |\widehat{\psi}(\lambda_n/N)|<+\infty$.
\end{remark}

This last theorem covers many examples. For instance, for all $0\leq a<b$, we may choose the function $\psi\in L_1(\RR)$ such that $\hat\psi=1$ on $[-a,a]$, $\hat\psi=0$ on $(-\infty,-b)\cup(b,+\infty)$ and $\hat \psi$ is affine on $(-b,-a)$ and on $(a,b)$.   As already observed during the proof of Theorem \ref{thm:BohrNC}, the function $\psi$ is given by 
\[ \psi(x)=C(a,b)\frac{\sin\left(\frac{a+b}2x\right)\sin\left(\frac{b-a}2x\right)}{x^2} \]
which clearly satisfies the assumptions of Theorem \ref{thm:maxRiesz}. This is also the case for
$\psi(x)=e^{-|x|}$ or $\psi(x)=e^{-x^2}$, provided the frequency $\lambda$ satisfies $\sum |\widehat{\psi}(\lambda_n/N)|<+\infty$ for all $N\geq 1$. To show that our result covers Theorem 2.1 of \cite{DSRiesz}, we also have to show that for $\alpha>0$ the function $\psi\in L_1(\RR)$ that satisfies
$$\widehat{\psi}(t)=(1-|t|)^\alpha \mathbf1_{[-1,1]}(t)$$
verifies the assumptions of Theorem \ref{thm:maxRiesz}. Let $x>0$. We already have observed that
$$\psi(x)=\frac1{ix}\mathcal F\left(\pm\alpha (1-|t|)^{\alpha-1}\mathbf 1_{[-1,1]}\right)(x).$$
Fix $\beta>0$ such that $|\beta(\alpha-1)|<1$ and let $x\geq 1$. Then
\begin{align*}
|\psi(x)|&\leq\frac{2\alpha}x \left |\int_0^1 (1-t)^{\alpha-1}e^{itx}dt\right|\\
&\leq \frac{2\alpha}x \left|\int_0^1 u^{\alpha-1}e^{-iux}du\right|.
\end{align*}
We split the integral into two parts. First, 
\[ \left| \int_0^{x^{-\beta}} u^{\alpha-1} e^{-iux}du\right|\leq \frac 1\alpha x^{-\alpha\beta}.\]
Second, integrating by parts,
\[ \int_{x^{-\beta}}^1 u^{\alpha-1}e^{-iux}du=\frac{-1}{ix}\left[u^{\alpha-1}e^{-iux}\right]_{x^{-\beta}}^1
+\frac{\alpha-1}{ix}\int_{x^{-\beta}}^1 u^{\alpha-2}e^{-iux}du \]
so that
$$\left |\int_{x^{-\beta}}^1 u^{\alpha-1} e^{-iux}du\right|\leq C\left(\frac 1x+\frac1{x^{1+(\alpha-1)\beta}}\right).$$
Our choice of $\beta$ guarantees that there is $\delta>0$ and $C>0$ such that, for $x\geq 1$, 
$$|\psi(x)|\leq \frac{C}{x^{1+\delta}}.$$
This shows that the assumptions of Theorem \ref{thm:maxRiesz} are satisfied with
$$g(x)=\left\{
\begin{array}{ll}
\displaystyle \frac C{x^{1+\delta}},&x\geq 1\\
\max(\|\psi\|_\infty,C),&x\in[0,1).
\end{array}\right.$$

\begin{remark}
Lemma \ref{lem:maximalRphi} and Theorem \ref{thm:maxRiesz} are of course very close. The latter one is true for all frequencies $\lambda$, but we start from a fixed function $\psi$ and it does not cover fully the case $p=1$. Lemma \ref{lem:maximalRphi} adapts at each step the $L^1$-function to the frequency $\lambda$ and to the function $f$. The price to pay is that we lose some factor $e^{\delta\lambda_N}$ and that we cannot use general results on the Hardy-Littlewood maximal function.
\end{remark}

%%%%%%%%%%%%%
%% HORIZONTAL %%%%

\section{Horizontal translations}

In this section, we investigate the boundedness from $\mathcal H_p(\lambda)$ into $\mathcal H_q(\lambda)$, for $q>p$, of the horizontal translation map $T_\sigma(\sum_n a_n e^{-\lambda_n s})=\sum_n a_n e^{-\sigma \lambda_n}e^{-\lambda_n s}$. We are interested in this map to determine the exact value of 
$$\sigma_{\mathcal H_p(\lambda)}=\inf\{\sigma\in\RR:\ \sigma_c(D)\leq \sigma\textrm{ for all }D\in\mathcal H_p(\lambda)\}$$
since it is easy to prove, using the Cauchy-Schwarz inequality, that $\sigma_{\mathcal H_2(\lambda)}=L(\lambda)/2$. 
Recall that, when $\lambda=(\log n)$, $\sigma_{\mathcal H_p(\lambda)}=1/2$ for all $p\in[1,+\infty)$. In the general case,
it is always possible to majorize $\sigma_{\mathcal H_1(\lambda)}$ if we know $\sigma_{\mathcal H_2(\lambda)}$.

\begin{proposition}\label{prop:sigmah1}
Let $\lambda$ be a frequency. Then $\sigma_{\mathcal H_1(\lambda)}\leq 2\sigma_{\mathcal H_2(\lambda)}$. 
\end{proposition}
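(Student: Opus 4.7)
The plan is to reduce the statement to a coefficient estimate combined with the definition of $L(\lambda)$. Recall from the introduction that $\sigma_{\mathcal H_2(\lambda)}=L(\lambda)/2$, where
$$L(\lambda)=\limsup_{N\to+\infty}\frac{\log N}{\lambda_N}=\sup_{D\in\mathcal D(\lambda)}\bigl(\sigma_a(D)-\sigma_c(D)\bigr).$$
If $L(\lambda)=+\infty$ the inequality is vacuous, so I would assume $L(\lambda)<+\infty$, and it suffices to show that every $D\in\mathcal H_1(\lambda)$ converges absolutely on $\CC_{L(\lambda)}$, i.e.\ $\sigma_a(D)\leq L(\lambda)$.

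The first step is the trivial coefficient bound. Writing $D=\mathcal B f$ for $f\in H_1^\lambda(G)$ via the Bohr transform, the coefficients are $a_n=\widehat f(h_{\lambda_n})=\int_G f\,\overline{h_{\lambda_n}}\,dm_G$, so that
$$|a_n|\leq \|h_{\lambda_n}\|_\infty\,\|f\|_1=\|D\|_1.$$
This is the only use of the $\mathcal H_1(\lambda)$ structure.

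Next I would show that for any $\sigma>L(\lambda)$, the series $\sum_n e^{-\lambda_n\sigma}$ converges. Picking $\sigma'$ with $L(\lambda)<\sigma'<\sigma$, the $\limsup$ definition gives, for $n$ sufficiently large, $\lambda_n\geq \sigma'^{-1}\log n$, hence $e^{-\lambda_n\sigma}\leq n^{-\sigma/\sigma'}$ with $\sigma/\sigma'>1$, so the series converges by comparison. Combined with the coefficient bound, this yields
$$\sum_n |a_n|\,e^{-\lambda_n\sigma}\leq \|D\|_1\sum_n e^{-\lambda_n\sigma}<+\infty,$$
for every $\sigma>L(\lambda)$, hence $\sigma_a(D)\leq L(\lambda)$ and a fortiori $\sigma_c(D)\leq L(\lambda)=2\sigma_{\mathcal H_2(\lambda)}$. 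Taking the supremum over $D\in\mathcal H_1(\lambda)$ gives the proposition.

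There is no substantive obstacle: the argument is essentially the classical observation that for any Dirichlet series with bounded coefficients $\sigma_a\leq L(\lambda)$, applied to the $\mathcal H_1$-coefficient estimate. The only thing to watch is the degenerate case $L(\lambda)=+\infty$, which is handled at the outset.
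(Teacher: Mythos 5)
Your proof is correct and follows essentially the same route as the paper: the coefficient bound $|a_n|\leq\|f\|_1$ together with the identity $2\sigma_{\mathcal H_2(\lambda)}=L(\lambda)$ and the comparison $e^{-\lambda_n\sigma}\leq n^{-\sigma/\sigma'}$ for $\sigma>\sigma'>L(\lambda)$, which is exactly the paper's estimate with $\sigma'$ playing the role of $L(\lambda)+\veps$. No issues to report.
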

\begin{proof}
Let $\veps>0$. It is sufficient to prove that, for all $f=\sum_j a_j e^{-\lambda_j s}$ belonging to $\mathcal H_1(\lambda)$,
for all $\sigma>2\sigma_{\mathcal H_2(\lambda)}+\veps=L(\lambda)+\veps$, 
$$\sum_{j=1}^{+\infty}|a_j|e^{-\lambda_j \sigma}<+\infty.$$
Let $J\geq 1$ be such that, for all $j\geq J$, $\log(j)/\lambda_j\leq L(\lambda)+\veps$. Then
$$\sum_j |a_j|e^{-\lambda_j \sigma}\leq (J-1) \|f\|_1+\sum_{j=J}^{+\infty}\|f\|_1 e^{-\frac{\sigma}{L(\lambda)+\veps}\log(j)}<+\infty$$
by our assumption on $\sigma$. 
\end{proof}

It turns out that, even if we put strong growth and separation conditions on $\lambda$, we cannot go further.

\begin{theorem}\label{thm:H1H2}
There exists a frequency $\lambda$ satisfying (BC) such that $\sigma_{\mathcal H_1(\lambda)}=2\sigma_{\mathcal H_2(\lambda)}$ and $\sigma_{\mathcal H_2(\lambda)}>0$.
\end{theorem}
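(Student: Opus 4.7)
The upper bound $\sigma_{\mathcal H_1(\lambda)}\leq 2\sigma_{\mathcal H_2(\lambda)}$ is Proposition~\ref{prop:sigmah1}, and $\sigma_{\mathcal H_2(\lambda)}=L(\lambda)/2$, so the task is to exhibit an explicit frequency $\lambda$ satisfying (BC) with $L(\lambda)>0$ and $\sigma_{\mathcal H_1(\lambda)}\geq L(\lambda)$. I would build $\lambda$ with a multilevel multilinear structure: for each $k\geq 1$ pick $N_k\approx Ck$ positive reals $\alpha_{k,1},\dots,\alpha_{k,N_k}$ in a tiny neighbourhood of $1$, all $\mathbb Q$-independent of each other and of the $\alpha_{j,i}$ for $j<k$, and with pairwise differences $\geq e^{-Ak}$ for a large constant $A$ (ensuring (BC)); set $\lambda:=\bigcup_k\{\sum_{i\in S}\alpha_{k,i}:S\subset\{1,\dots,N_k\},\ |S|=k\}$, so the $k$-th level contributes $\binom{N_k}{k}$ frequencies clustered around $k$. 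By Stirling $L(\lambda)=\log C+(C-1)\log(C/(C-1))$, which is a finite positive function of $C$ that must be fine-tuned so that $L(\lambda)$ equals the critical value discussed below. The associated $\lambda$-Dirichlet group is $G=\mathbb T^{\mathbb N}$ with $h_{\sum_{i\in S}\alpha_{k,i}}(\omega)=\prod_{i\in S}w_{k,i}$, and $\mathcal H_p(\lambda)$ identifies with the $L^p$-closure of multilinear polynomials in these variables.

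For the lower bound, given $\varepsilon>0$ I want to produce $f\in\mathcal H_1(\lambda)$ whose Dirichlet series diverges at some $s_0=\sigma+it_0$ with $\sigma=L(\lambda)-\varepsilon$. Take $f=\sum_k P_k/k^2$ where $P_k$ is a $k$-homogeneous polynomial in the $k$-th batch of variables, normalised so $\|P_k\|_1\leq 1$; disjointness of the batches gives $\|f\|_1\leq\sum 1/k^2<\infty$. Because $P_k$ is $k$-homogeneous with $\alpha_{k,i}\approx 1$, one has $|P_k(s_0)|\approx e^{-\sigma k}|P_k(\tau_k(t_0))|$ where $\tau_k(t)_i:=e^{-it\alpha_{k,i}}\in\mathbb T$, and Kronecker's theorem on the $\mathbb Q$-independent family $(\alpha_{k,i})_i$ yields $\sup_{t\in\mathbb R}|P_k(\tau_k(t))|=\|P_k\|_\infty$, attained at some $t_k\in\mathbb R$. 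A compactness/diagonal argument on $\mathbb T^{\mathbb N}$ then extracts a single $t_0$ such that $|P_k(s_0)|\geq k^2$ for infinitely many $k$, so $P_k(s_0)/k^2\not\to 0$ and the Dirichlet series of $f$ fails to converge at $s_0$, giving $\sigma_c(f)\geq L(\lambda)-\varepsilon$.

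The polynomial $P_k$ is chosen proportional to $e_k:=\sum_{|S|=k}\prod_{i\in S}w_{k,i}$: positivity of coefficients gives $\|e_k\|_\infty=e_k(1,\dots,1)=\binom{N_k}{k}\sim e^{L(\lambda)k}$, while a Kahane-type polynomial inequality on the polytorus gives $\|e_k\|_1\gtrsim K^{-k}\|e_k\|_2=K^{-k}\sqrt{\binom{N_k}{k}}$ for the absolute constant $K$ from the $L^1$--$L^2$ comparison on $k$-homogeneous polynomials. Hence $\|e_k\|_\infty/\|e_k\|_1\lesssim K^k\sqrt{\binom{N_k}{k}}\sim e^{(L(\lambda)+2\log K)k/2}$, and the divergence condition $\|e_k\|_\infty\gtrsim k^2e^{\sigma k}$ becomes asymptotically $\sigma<(L(\lambda)+2\log K)/2$. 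Tuning $C$ so that $L(\lambda)=2\log K$ makes the right-hand side equal to $L(\lambda)$, and combined with the upper bound $\sigma_{\mathcal H_1(\lambda)}\leq L(\lambda)$ this gives the equality $\sigma_{\mathcal H_1(\lambda)}=2\sigma_{\mathcal H_2(\lambda)}=L(\lambda)>0$.

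The main obstacle is producing the sharp polynomial inequality $\|e_k\|_\infty/\|e_k\|_1\gtrsim e^{L(\lambda)k}$: the Kahane constant for $k$-homogeneous polynomials on the polytorus dictates exactly which value $L(\lambda)=2\log K$ can be achieved by this construction, and the parameter $C$ in the definition of $\lambda$ must be adjusted accordingly. The Bohr--Kronecker density argument needed to extract a single common $t_0$ from the $k$-dependent maximisers $t_k$ is standard once the $P_k$'s are chosen.
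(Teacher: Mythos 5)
The reduction to showing $\sigma_{\mathcal H_1(\lambda)}\geq L(\lambda)$ for some (BC)-frequency with $L(\lambda)>0$ is the right frame (Proposition \ref{prop:sigmah1} gives the upper bound), but your lower bound has a genuine gap, and it sits exactly where you place "the main obstacle". With $P_k=e_k/\|e_k\|_1$, divergence at real part $\sigma$ requires $\|e_k\|_\infty/\|e_k\|_1\gtrsim k^2e^{\sigma k}$, i.e.\ a \emph{lower} bound on this ratio, equivalently a subexponential \emph{upper} bound on $\|e_k\|_1$. The Kahane/hypercontractive comparison $\|e_k\|_1\geq K^{-k}\|e_k\|_2$ that you invoke goes the wrong way: it bounds the ratio from \emph{above} by $K^k\sqrt{\binom{N_k}{k}}$, and the only bound in the useful direction that these tools give is $\|e_k\|_1\leq\|e_k\|_2$, which yields divergence only for $\sigma<L(\lambda)/2$ --- that is, nothing beyond the trivial $\sigma_{\mathcal H_1}\geq\sigma_{\mathcal H_2}$. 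To reach $\sigma$ close to $L(\lambda)$ you would need $\|e_k\|_1=e^{o(k)}$, i.e.\ that the elementary symmetric polynomials are (asymptotically) extremal for the $L^1$--$L^2$ comparison with the tuned constant; this is a strong unproven claim, not a routine citation, so the proof as proposed does not close. Two further points would also need repair: the "compactness/diagonal" extraction of a single $t_0$ making infinitely many independent blocks simultaneously large is not justified as stated, and (BC) for $\lambda$ requires separation of the \emph{subset sums} $\sum_{i\in S}\alpha_{k,i}$, which does not follow from separating the $\alpha_{k,i}$ themselves.

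The paper's construction avoids all of these difficulties by making each block literally a one-dimensional Dirichlet kernel: $\lambda_{2^n+k}=n+k\delta_n$ with $\delta_n\in(2^{-n-1},2^{-n}]$ and $(2\pi,n,\delta_n)$ rationally independent, so the test series $D=\sum_{k=0}^{2^n-1}e^{-\lambda_{2^n+k}s}$ corresponds on $\mathbb{T}^2$ to $z_1\sum_{k<2^n}z_2^k$, whose $L^1$-norm is $O(n)$ by the classical Dirichlet kernel estimate --- this is exactly the subexponential $L^1$ bound your $e_k$ would need but doesn't obviously have. Moreover, instead of building one divergent series (hence needing a common $t_0$), the paper applies the uniform boundedness principle: if $\sigma>\sigma_{\mathcal H_1(\lambda)}$ the partial-sum functionals at the real point $\sigma$ are uniformly bounded on $\mathcal H_1(\lambda)$, and since $D$ has positive coefficients its partial sum at real $\sigma$ is $\gtrsim e^{(\log 2-\sigma)n}$ while $\|D\|_1\lesssim n$, forcing $\sigma\geq\log 2=L(\lambda)=2\sigma_{\mathcal H_2(\lambda)}$. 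If you want to salvage your multilinear scheme, you would have to prove the sharp upper bound on $\|e_k\|_1$ (or replace $e_k$ by blocks with a kernel-type tensor structure, which essentially brings you back to the paper's example).
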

\begin{proof}
For $n\geq 2$, let $\delta_n\in (2^{-n-1},2^{-n}]$ such that $(2\pi,n,\delta_n)$ are $\mathbb Z$-independent. We set 
$$\lambda_{2^n+k}=n+k\delta_n\textrm{ for }n\geq 1,\ k=0,\dots,2^n-1.$$
It is easy to check that $L(\lambda)=\log(2)$ so that $\sigma_{\mathcal H_2(\lambda)}=(\log 2)/2$. Moreover, it is
also easy to check that $\lambda$ satisfies (BC). Indeed, for $n\geq 2$ and $k=0,\dots,2^n-2$, 
$$\lambda_{2^n+k+1}-\lambda_{2^n+k}=\delta_n \geq\frac 12 2^{-n}\geq Ce^{-(\log 2)\lambda_{2^n+k}}$$
and similarly 
$$\lambda_{2^{n+1}}-\lambda_{2^{n+1}-1}\geq 2^{-n}\geq Ce^{-(\log 2)\lambda_{2^{n+1}-1}}.$$
 Pick now any $\sigma>\sigma_{\mathcal H_1(\lambda)}$. By the principle of uniform boundedness, there exists $C_0>0$ such that, for 
 all $D=\sum_j a_j e^{-\lambda_j s}$ belonging to $\mathcal H_1(\lambda)$, for all $N\geq 2$, 
 \begin{equation}
  \left |\sum_{j=2}^N a_j e^{-\lambda_j \sigma}\right| \leq C_0 \|D\|_1 \label{eq:dif1}
 \end{equation}
Let $n\geq 2$ and choose $D=\sum_{k=0}^{2^n-1}e^{-\lambda_{2^n+k}s}$. Then 
\begin{equation}\label{eq:dif2}
\sum_{k=0}^{2^n-1}e^{-\lambda_{2^n+k}\sigma}\geq 2^n e^{-\sigma(n+1)}\geq C_1 e^{(\log 2-\sigma)n}.
\end{equation}
On the other hand, set $\lambda'=\{n+k\delta_n:k\geq 0\}$ and observe that, using the internal description of the norm of $\mathcal H_1$, 
$$\|D\|_{\mathcal H_1(\lambda)}=\|D\|_{\mathcal H_1(\lambda')}.$$
We shall compute $\|D\|_{\mathcal H_1(\lambda')}$ using Fourier analysis. Indeed, since $(2\pi,n,\delta_n)$ are $\mathbb Z$-independent, the map $\beta:\mathbb R\to\mathbb T^2,\ t\mapsto(e^{-itn},e^{-it\delta_n})$ has dense range, so that 
$(\mathbb T^2,\beta)$ is a $\lambda'$-Dirichlet group. Therefore, 
\begin{equation}
\|D\|_{\mathcal H_1(\lambda')}=\int_{\mathbb T^2}\left|\sum_{k=0}^{2^n-1}z_1z_2^k\right|dz_1dz_2\leq C_2 n
\label{eq:dif3}
\end{equation}
by the classical estimate of the norm of the Dirichlet kernel. Hence, \eqref{eq:dif1}, \eqref{eq:dif2} and \eqref{eq:dif3} imply that, for all $\sigma>\sigma_{\mathcal H_1(\lambda)}$, $\sigma\geq\log(2)$. This yields $\sigma_{\mathcal H_1(\lambda)}\geq 2\sigma_{\mathcal H_2(\lambda)}$.
\end{proof}

In view of the previous results, it seems natural to study how arithmetical properties of the frequency $\lambda$ can influence the values of $\sigma$ for which $T_\sigma:\mathcal H_p(\lambda)\to \mathcal H_q(\lambda)$, $p<q$, is bounded. 
We concentrate on the case $p=2$ and $q=2k$, $k\geq 1$, because we can compute the norms using the coefficients. We define $\lambda*\lambda$ as $\{\lambda_l+\lambda_k:\ l,k\geq 0\}$ and $\lambda^{*k}=\lambda*\cdots*\lambda$ (with $k$ factors).

%We are able to provide a sufficient condition for the boundedness of $T_\sigma$ from $\mathcal H_2(\lambda)$ into $\mathcal H_{2k}(\lambda)$ provided the sequence $\lambda$ satisfies some arithmetical conditions.
%We define $\lambda*\lambda$ as $\{\lambda_l+\lambda_k:\ l,k\geq 0\}$ and $\lambda^{*k}=\lambda*\cdots*\lambda$ (with $k$ factors).

\begin{definition}
Let $\lambda$ be a frequency and $k\geq 1$. Write $\lambda^{*k}=(\mu_l)$ where the sequence $(\mu_l)$ is increasing. We set
$$A(\lambda,k)=\limsup_{l\to+\infty}\frac{\log\left(\card\left\{(n_1,\dots,n_k):\lambda_{n_1}+\cdots+\lambda_{n_k}=\mu_l\right\}\right)}{2\mu_l}.$$
\end{definition}

\begin{proposition}\label{prop:H2H2K}
 Let $\lambda$ be a frequency and $k\geq 1$. Then for $\sigma>A(\lambda,k)$,  $T_\sigma$ maps boundedly $\mathcal H_2(\lambda)$ into $\mathcal H_{2k}(\lambda)$. 
\end{proposition}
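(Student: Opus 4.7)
My plan is to work on the Dirichlet group side. Fix $(G,\beta)$ a $\lambda$-Dirichlet group and let $f = \sum_n a_n h_{\lambda_n}$ be a $\lambda$-polynomial (density of polynomials will yield the general case). The key identity is
\[ \|T_\sigma f\|_{2k}^{2k} = \int_G |T_\sigma f|^{2k}\, dm_G = \int_G |(T_\sigma f)^k|^2\, dm_G = \|(T_\sigma f)^k\|_{L^2(G)}^2, \]
so everything reduces to estimating the $L^2$-norm of $(T_\sigma f)^k$, which I will compute via Plancherel after expanding. The boundedness estimate I aim for is $\|T_\sigma f\|_{2k} \leq C \|f\|_2$; then I will extend by density of $\mathrm{Pol}_\lambda(G)$ in $H_2^\lambda(G)$.

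For the expansion, since the $h_{\lambda_n}$ are characters, the product $h_{\lambda_{n_1}}\cdots h_{\lambda_{n_k}}$ equals $h_{\lambda_{n_1}+\cdots+\lambda_{n_k}}$. Writing $\lambda^{*k}=(\mu_l)$ and grouping monomials according to which $\mu_l$ they produce, I obtain
\[ (T_\sigma f)^k = \sum_{l} e^{-\sigma \mu_l} c_l\, h_{\mu_l}, \quad c_l := \sum_{\substack{(n_1,\dots,n_k) \\ \lambda_{n_1}+\cdots+\lambda_{n_k}=\mu_l}} a_{n_1}\cdots a_{n_k}. \]
Setting $N_l := \mathrm{card}\{(n_1,\dots,n_k) : \lambda_{n_1}+\cdots+\lambda_{n_k}=\mu_l\}$ and applying Cauchy--Schwarz to $c_l$, Plancherel gives
\[ \|(T_\sigma f)^k\|_2^2 = \sum_l e^{-2\sigma \mu_l} |c_l|^2 \leq \sum_l N_l\, e^{-2\sigma \mu_l} \sum_{\substack{(n_1,\dots,n_k) \\ \lambda_{n_1}+\cdots+\lambda_{n_k}=\mu_l}} |a_{n_1}|^2\cdots |a_{n_k}|^2. \]

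Now I use the arithmetic input. Since $\sigma > A(\lambda,k)$, I pick $\veps>0$ with $A(\lambda,k)+\veps<\sigma$; by definition of $A(\lambda,k)$ there is $C>0$ such that $N_l \leq C e^{2(A(\lambda,k)+\veps)\mu_l}$ for every $l$. Substituting this bound and using $\mu_l = \lambda_{n_1}+\cdots+\lambda_{n_k}$ on each tuple in the inner sum, the factor $N_l e^{-2\sigma \mu_l}$ becomes at most $C e^{-2(\sigma-A(\lambda,k)-\veps)(\lambda_{n_1}+\cdots+\lambda_{n_k})}$, which factorizes as a product over the indices $n_i$. Undoing the grouping by $\mu_l$ turns the double sum into a clean tensor product:
\[ \|T_\sigma f\|_{2k}^{2k} \leq C\left( \sum_n |a_n|^2 e^{-2(\sigma-A(\lambda,k)-\veps)\lambda_n}\right)^{\!k} \leq C \left(\sum_n |a_n|^2\right)^{\!k} = C\|f\|_2^{2k}, \]
where the last inequality uses $\sigma - A(\lambda,k)-\veps>0$ and $\lambda_n \geq 0$. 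Taking $2k$-th roots gives the required bounded extension of $T_\sigma$ from $\mathcal H_2(\lambda)$ to $\mathcal H_{2k}(\lambda)$.

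There is no significant obstacle: the argument is algebraic once the identification $\|g\|_{2k}^{2k}=\|g^k\|_2^2$ and Plancherel on $G$ are used. The only point requiring minor care is that the spectrum of $f^k$ lies in $\lambda^{*k}$, not in $\lambda$, so Plancherel must be applied in the ambient $L^2(G)$ rather than in $H_2^\lambda(G)$; this is fine since $L^2(G)$ has an orthonormal basis of characters and the $h_{\mu_l}$ are still pairwise orthogonal.
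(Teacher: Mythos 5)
Your proof is correct and follows essentially the same route as the paper: both reduce the statement to the square-summability of the coefficients of $(T_\sigma D)^k$ with respect to $\lambda^{*k}$ and both rely on the same counting bound $\operatorname{card}\{(n_1,\dots,n_k):\lambda_{n_1}+\cdots+\lambda_{n_k}=\mu_l\}\leq Ce^{2(A(\lambda,k)+\veps)\mu_l}$. The only cosmetic difference is that you apply Cauchy--Schwarz directly to each coefficient $c_l$ and then unfold the sum, whereas the paper tests $(b_{\mu_l})$ against an arbitrary square-summable sequence and applies Cauchy--Schwarz in the tuple variables, which is just the dual formulation of the same estimate.
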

\begin{proof}
We shall prove a slightly stronger statement : if $\sigma>0$ is such that there exists $C>0$ such that for any $\mu>0$, 
 $$e^{-2\mu\sigma}\textrm{card}\{(n_1,\dots,n_k): \lambda_{n_1}+\cdots+\lambda_{n_k}=\mu\}\leq C,$$
then $T_\sigma$ maps boundedly $\mathcal H_2(\lambda)$ into $\mathcal H_{2k}(\lambda)$. 
 Indeed, let $D=\sum_n a_n e^{-\lambda_n s}$ belonging to $\mathcal H_2(\lambda)$. We write 
 $T_\sigma(D)^k=\sum_l b_{\mu_l}e^{-\mu_l s}$ where $$b_{\mu_l}=\sum_{\lambda_{n_1}+\cdots+\lambda_{n_k}=\mu_l}a_{n_1}\cdots  a_{n_k} e^{-\mu_l\sigma}.$$
 We just need to prove that the sequence $(b_{\mu_l})$ is square summable, namely that for all square summable sequences $(c_{\mu_l})$, 
 $\sum_l b_{\mu_l}c_{\mu_l}$ is convergent, namely that 
 $$\sum_{n_1,\dots, n_k} a_{n_1}\cdots a_{n_k} e^{-(\lambda_{n_1}+\cdots+\lambda_{n_k})\sigma}c_{\lambda_{n_1}+\cdots+\lambda_{n_k}}\textrm{is convergent}.$$
 By the Cauchy-Schwarz inequality, since $(a_n)$ is square summable, it is sufficient to prove that 
 $$\sum_{n_1,\dots, n_k} e^{-2\lambda_{n_1}\sigma}\cdots e^{-2\lambda_{n_k}\sigma} |c_{\lambda_{n_1}+\cdots+\lambda_{n_k}}|^2<+\infty.$$
Rewriting this
$$\sum_l |c_{\mu_l}|^2 e^{-2\mu_l\sigma} \textrm{card}\{(n_1,\dots,n_k):\lambda_{n_1}+\cdots+\lambda_{n_k}=\mu_l\}<+\infty$$
this follows from the assumption. 
\end{proof}

\begin{corollary}
Let $\lambda$ be a frequency, $k\geq 1$ and $\sigma>(k-1)L(\lambda)/2$. Then $T_\sigma$ maps $\mathcal H_2(\lambda)$ into $\mathcal H_{2k}(\lambda)$.
\end{corollary}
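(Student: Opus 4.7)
The plan is to apply Proposition \ref{prop:H2H2K}, so it suffices to bound $A(\lambda,k)$ from above by $(k-1)L(\lambda)/2$. Equivalently, it suffices to control the counting function
$$r_k(\mu):=\mathrm{card}\left\{(n_1,\dots,n_k):\lambda_{n_1}+\cdots+\lambda_{n_k}=\mu\right\}$$
by $e^{(k-1)(L(\lambda)+\veps)\mu}$ up to a multiplicative constant, for every $\veps>0$ and every $\mu$ large enough.

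First I would unpack the definition $L(\lambda)=\limsup_N \log N/\lambda_N$. Fixing $\veps>0$, there exists $C_\veps>0$ such that the number $N(\mu):=\mathrm{card}\{n:\lambda_n\leq \mu\}$ satisfies $N(\mu)\leq C_\veps e^{(L(\lambda)+\veps)\mu}$ for every $\mu>0$ (choose $N$ maximal with $\lambda_N\leq\mu$ and apply the inequality $\log N\leq (L(\lambda)+\veps)\lambda_N$ valid for $N$ large, then absorb the finitely many remaining terms into $C_\veps$).

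Next I would observe the key combinatorial step: in any decomposition $\lambda_{n_1}+\cdots+\lambda_{n_k}=\mu$ with nonnegative summands, each index satisfies $\lambda_{n_j}\leq \mu$; moreover, since the sequence $(\lambda_n)$ is strictly increasing, once $n_1,\dots,n_{k-1}$ are chosen, the value $\lambda_{n_k}=\mu-\lambda_{n_1}-\cdots-\lambda_{n_{k-1}}$ determines $n_k$ uniquely (when such an index exists at all). Hence
$$r_k(\mu)\leq N(\mu)^{k-1}\leq C_\veps^{k-1}e^{(k-1)(L(\lambda)+\veps)\mu}.$$

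Taking logarithms, dividing by $2\mu$ and letting $\mu\to\infty$, one gets $A(\lambda,k)\leq (k-1)(L(\lambda)+\veps)/2$, and since $\veps>0$ is arbitrary, $A(\lambda,k)\leq (k-1)L(\lambda)/2$. Consequently every $\sigma>(k-1)L(\lambda)/2$ satisfies $\sigma>A(\lambda,k)$, and Proposition \ref{prop:H2H2K} yields the boundedness of $T_\sigma:\mathcal H_2(\lambda)\to\mathcal H_{2k}(\lambda)$. There is no genuine obstacle; the only subtle point is the remark that strict monotonicity of $\lambda$ makes the last summand of the representation determined by the previous ones, which is what saves one factor of $N(\mu)$ and produces the exponent $k-1$ rather than $k$.
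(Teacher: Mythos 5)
Your proof is correct and follows essentially the same route as the paper: bound the number of representations $\lambda_{n_1}+\cdots+\lambda_{n_k}=\mu$ by (number of indices with $\lambda_n\leq\mu$)$^{k-1}$, using that the last index is determined by the others, then estimate that count via $L(\lambda)$ and invoke Proposition \ref{prop:H2H2K}. No issues to report.
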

\begin{proof}
Let $\veps>0$. There exists $N\geq 1$ such that, for all $n\geq N$, $\log(n)/\lambda_n\leq L(\lambda)+\veps$.
Let $\mu\in (\lambda^*)^k$ and $n_1,\dots,n_k$ be such that $\lambda_{n_1}+\cdots+\lambda_{n_k}=\mu$.
Then each $\lambda_{n_i}$ is smaller than $\mu$ so that either $n_i\leq N$ or $n_i\leq \exp(\mu(L(\lambda)+\veps))$. 
Since the knowledge of $n_1,\dots,n_{k-1}$ determines the value of $n_k$, we have
$$\card\left\{(n_1,\dots,n_k):\ \lambda_{n_1}+\cdots+\lambda_{n_k}=\mu\right\}\leq N^{k-1}\exp\big((k-1)\mu(L(\lambda)+\veps)\big).$$
Taking the logarithm and letting $\mu$ to $+\infty$, we find $A(\lambda,k)\leq \frac{(k-1)(L(\lambda)+\veps)}{2}$, 
hence the inequality $A(\lambda,k)\leq \frac{(k-1)L(\lambda)}{2}$ since $\veps$ is arbitrary.
\end{proof}

\begin{corollary}
Let $\lambda$ be a frequency such that $L(\lambda)=0$. Then $T_\sigma$ maps boundedly $\mathcal H_2(\lambda)$ into $\mathcal H_q(\lambda)$ for all $q\geq 2$. 
\end{corollary}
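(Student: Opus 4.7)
The plan is to reduce the statement to the previous corollary combined with the trivial nesting of $L^p$-spaces on a probability space.

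First, I would invoke the previous corollary in the regime $L(\lambda)=0$. The threshold $(k-1)L(\lambda)/2$ then collapses to $0$, so for every integer $k\geq 1$ and every $\sigma>0$, the operator $T_\sigma:\mathcal H_2(\lambda)\to\mathcal H_{2k}(\lambda)$ is bounded. This already settles the claim on the discrete sequence of even exponents.

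Second, to pass from $2k$ to an arbitrary real $q\in[2,+\infty)$, I would pick an integer $k\geq 1$ with $2k\geq q$ and use the group description $\mathcal H_p(\lambda)\simeq H_p^\lambda(G)\subset L_p(G)$, where $G$ is a compact abelian group endowed with its normalized Haar measure, hence a probability measure. On such a measure space H\"older's inequality yields the contractive inclusion $L_{2k}(G)\hookrightarrow L_q(G)$, which restricts to a continuous inclusion $H_{2k}^\lambda(G)\hookrightarrow H_q^\lambda(G)$ since the spectral condition $\operatorname{supp}(\hat{f})\subset\{h_{\lambda_n}\}$ is preserved. Composing with the map of the previous step I get $T_\sigma:\mathcal H_2(\lambda)\to\mathcal H_{2k}(\lambda)\hookrightarrow\mathcal H_q(\lambda)$ bounded, for every $\sigma>0$.

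There is no real obstacle: the argument is a one-line deduction from the previous corollary together with the elementary fact that on a probability space the $L^p$-scale is nested. The only minor care is to check that the inclusion $H_{2k}^\lambda(G)\hookrightarrow H_q^\lambda(G)$ is available from the group-theoretic definition of $\mathcal H_p(\lambda)$ recalled in the introduction, so that one indeed stays inside the Hardy subspace after enlarging the ambient $L^p$-space.
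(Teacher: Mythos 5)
Your proof is correct and follows exactly the route the paper intends (the corollary is stated without proof precisely because it is this immediate deduction): with $L(\lambda)=0$ the threshold $(k-1)L(\lambda)/2$ vanishes, so the preceding corollary gives boundedness of $T_\sigma:\mathcal H_2(\lambda)\to\mathcal H_{2k}(\lambda)$ for every $\sigma>0$, and the contractive inclusion $H_{2k}^\lambda(G)\hookrightarrow H_q^\lambda(G)$ on the probability space $(G,m_G)$ handles arbitrary $q\in[2,2k]$. Nothing is missing.
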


\begin{question}
Let $p\geq 2$ and let $\lambda$ be a  frequency. Does $T_\sigma$ maps $\mathcal H_2(\lambda)$ into $\mathcal H_p(\lambda)$ as soon as $\sigma>\frac{(p-2)L(\lambda)}{4}$?
\end{question}

\begin{example}
Let $\lambda=(\log n)$. Then for all $k\geq 1$, $A(\lambda,k)=0$.
\end{example}
\begin{proof}
We first observe that $(\lambda^*)^k=\lambda$. Pick now $\log n\in\lambda$. We want to know 
the cardinal number of $\{(n_1,\dots,n_k)\in\NN:\ n_1\times\cdots\times n_k=n\}$. Decompose $n$ into a product
of prime numbers, $n=p_1^{\alpha_1}\cdots p_r^{\alpha_r}$. Then each $n_k$ writes $p_1^{\alpha_1(k)}\cdots p_r^{\alpha_r(k)}$ with $\alpha_j(1)+\cdots+\alpha_j(r)=\alpha_j$, $1\leq j\leq r$. Hence, $(\alpha_j(1),\cdots\alpha_j(r))$
is a weak composition of $\alpha_j$ into $k$ parts which can be done in $\binom{\alpha_i+k-1}{k-1}$ ways. In total, there are 
$$\prod_{i=1}^r \binom{\alpha_i+k-1}{k-1}\leq \prod_{i=1}^r (\alpha_i+k)^k$$
ways to write $n$ as a product of $k$ factors. Thus, 
$$A(\lambda,k)\leq\limsup_{n=\prod_{i=1}^r p_i^{\alpha_i}\to+\infty}\frac{\sum_{i=1}^r k\log(\alpha_i+k)}{2\sum_{i=1}^r \alpha_i\log(p_i)}=0.$$
\end{proof}

\smallskip

We finish this section by exhibiting a frequency $\lambda$ satisfying (BC) and such that, for all $k\geq 1$, $T_\sigma$ maps
$\mathcal H_2(\lambda)$ into $\mathcal H_{2k}(\lambda)$ if and only if $\sigma\geq A(\lambda,k)=\frac{k-1}{2k}$. We begin with two combinatorial lemmas.

\begin{lemma}\label{lem:combi1}
Let $b,c>0$, let $n\in\NN$ and let $\lambda_j=b+jc$, $j\geq 0$. For all $k\in \NN$, there exist $\gamma_k\in (0,1]$ and $\delta_k>0$ such that, for all $n\geq 2^{k}$, for all $\ell\in [(k-\gamma_k)n,(k+\gamma_k)n]\cap\NN_0$, 
$$\card\left\{(j_1,\dots,j_k)\in\{0,\dots,2n\}^k:\ \lambda_{j_1}+\cdots+\lambda_{j_k}= kb+\ell c\right\}\geq \delta_k n^{k-1}.$$
\end{lemma}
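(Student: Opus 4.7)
The key reduction is that, since $\lambda_j=b+jc$, the equation $\lambda_{j_1}+\cdots+\lambda_{j_k}=kb+\ell c$ is equivalent to $j_1+\cdots+j_k=\ell$. The statement thus becomes a purely combinatorial lower bound on the number of ordered $k$-tuples $(j_1,\dots,j_k)\in\{0,\dots,2n\}^k$ with $j_1+\cdots+j_k=\ell$, for $\ell$ close to the centre $kn$. My plan is to prove this by induction on $k$, taking $\gamma_k=1/2$ for every $k\geq 1$ and a sequence $\delta_k>0$ determined by the recursion.

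The base case $k=1$ is immediate: for $\ell\in[n/2,3n/2]\cap\NN_0$ the unique admissible tuple is $j_1=\ell$, so $\delta_1=1$ suffices. For the inductive step I would condition on the last coordinate $j_{k+1}$: for $\ell\in[(k+1/2)n,(k+3/2)n]\cap\NN_0$,
$$\card\bigl\{(j_1,\dots,j_{k+1}):j_1+\cdots+j_{k+1}=\ell\bigr\}=\sum_{j=0}^{2n}\card\bigl\{(j_1,\dots,j_k):j_1+\cdots+j_k=\ell-j\bigr\},$$
and I would restrict the sum to those $j$ lying in the interval $J:=[\ell-(k+1/2)n,\,\ell-(k-1/2)n]$, so that $\ell-j\in[(k-1/2)n,(k+1/2)n]$ and the inductive hypothesis applies. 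For $\ell$ in the target range a direct check shows $J\subset[0,2n]$; since $|J|=n$, it contains at least $n$ integers, and the inductive bound then yields
$$\card\bigl\{(j_1,\dots,j_{k+1}):j_1+\cdots+j_{k+1}=\ell\bigr\}\geq n\cdot\delta_k n^{k-1}=\delta_k n^k,$$
so $\delta_{k+1}=\delta_k$ closes the induction.

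There is essentially no real obstacle here. The only subtlety is the choice of $\gamma_k$: to keep the translated interval $J$ inside $[0,2n]$ one needs $\gamma_{k+1}\leq 1-\gamma_k$, and the symmetric choice $\gamma_k\equiv 1/2$ saturates this trivially and keeps the recursion constant from step to step. The hypothesis $n\geq 2^k$ is a harmless safety factor ensuring that $J$ contains enough integer points; since any positive integer $n$ already yields at least $n$ integers in an interval of length $n$, the precise threshold is cosmetic and only matters to absorb boundary/rounding effects uniformly in $k$.
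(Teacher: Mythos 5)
Your proof is correct and follows essentially the same route as the paper: induction on $k$, fixing the last coordinate $j_{k+1}$ and applying the inductive hypothesis to $\ell-j_{k+1}$. Your only deviation is that you shift the window for $j_{k+1}$ with $\ell$ instead of centering it at $n$, which lets you keep $\gamma_k=1/2$ and $\delta_k=1$ (the paper instead halves $\gamma_k$ at each step and uses $n\geq 2^k$ to count integer points); this is a harmless sharpening, not a different argument.
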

\begin{proof}
We define the sequences $(\gamma_k)$ and $(\delta_k)$ by $\gamma_k=2^{-(k-1)}$ and $\delta_1=1$, $\delta_{k+1}=\delta_k\cdot \gamma_{k+1}$. We proceed by induction over $k$. The case $k=1$ is trivial. Assume that the result has been proved for $k$ and let us prove it for $k+1$. Let $n\geq 2^{k+1}$. Choose $j_{k+1}$ any integer in 
$[(1-\gamma_{k+1})n,(1+\gamma_{k+1})n]$ and $\ell\in [(k+1-\gamma_{k+1})n,(k+1+\gamma_{k+1})n]\cap\NN_0$. Then
\begin{equation}\label{eq:lemcombi1}
\lambda_{j_1}+\cdots+\lambda_{j_{k+1}}=(k+1)b+\ell c \iff \lambda_{j_1}+\cdots+\lambda_{j_k}=kb+(\ell-{j_{k+1}})c.
\end{equation}
Now,
$$\left|\ell-{j_{k+1}}-kn\right|\leq 2\gamma_{k+1}n=\gamma_kn $$
so that there exist at least $\delta_k n^{k-1}$ choices of $(j_1,\dots,j_k)$ such that \eqref{eq:lemcombi1} is true, $j_{k+1}$ being fixed. Now, there are $2\lfloor \gamma_{k+1}n\rfloor+1$ choices of $j_{k+1}$ and since $\gamma_{k+1}n-1\geq \gamma_{k+1}n/2$ because $\gamma_{k+1}n\geq 2$, we get the result.
\end{proof}

\begin{lemma}\label{lem:combi2}
Let $(b_n)$ and $(c_n)$ be two sequences of positive real numbers such that the sequences $(b_1,\dots,b_N,c_1,\dots,c_N)$ are $\ZZ$-independent for all $N\geq 1$, $2n+1\leq\exp(b_n)$ and $nc_n\leq 1$ for each $n\in\NN$. Define a sequence $(\lambda_n)$ by $\lambda_{m^2+j}=b_m+jc_m$, $m\geq 1$, $j=0,\dots,2m$. Then for all $k>0$ there exists $C_k>0$ such that, for all $\mu>0$, 
$$\card\left\{(n_1,\dots,n_k)\in\NN^k:\ \lambda_{n_1}+\cdots+\lambda_{n_k}=\mu\right\}\leq
C_k\exp\left(\frac{(k-1)\mu}k\right).$$
\end{lemma}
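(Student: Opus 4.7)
The plan is to use the $\ZZ$-independence to show that $\mu$ uniquely determines the ``pattern'' of the representation, and then to count representations compatible with this pattern. Each $n \in \NN$ decomposes uniquely as $n = m^2 + j$ with $m \geq 1$ and $j \in \{0,\dots,2m\}$, giving $\lambda_n = b_m + j c_m$. So if $(n_1,\dots,n_k)$ satisfies $\lambda_{n_1}+\cdots+\lambda_{n_k}=\mu$, writing $n_l = m_l^2 + j_l$, $r_m := \#\{l : m_l = m\}$ and $S_m := \sum_{l : m_l = m} j_l$, we obtain
\[
\mu = \sum_m r_m b_m + \sum_m S_m c_m,
\]
a finite $\ZZ$-linear combination of $b$'s and $c$'s. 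The $\ZZ$-independence assumption therefore forces the nonnegative integer sequences $(r_m)_m$ and $(S_m)_m$ to be uniquely determined by $\mu$.

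Given this fixed pattern, I would count the ordered tuples $(n_1,\dots,n_k)$ realizing it. The number of ways to partition the positions $\{1,\dots,k\}$ into blocks $I_m := \{l : m_l = m\}$ of prescribed sizes $r_m$ is a multinomial coefficient bounded by $k!$. Once these blocks are fixed, for each $m$ with $r_m > 0$ one must pick an ordered $r_m$-tuple of values in $\{0,\dots,2m\}$ summing to $S_m$; freeing the first $r_m - 1$ entries and solving for the last gives the crude bound $(2m+1)^{r_m - 1}$. Altogether,
\[
\card\{(n_1,\dots,n_k) : \lambda_{n_1}+\cdots+\lambda_{n_k}=\mu\} \leq k! \prod_{m:\,r_m>0}(2m+1)^{r_m-1}.
\]

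The last step exploits the growth hypothesis $2m+1 \leq e^{b_m}$ to convert the product into $\exp\bigl(\sum_m (r_m-1)b_m\bigr)$, together with the elementary inequality
\[
\sum_m (r_m - 1)b_m = \sum_m r_m b_m - \sum_{m:\,r_m>0} b_m \leq \frac{k-1}{k}\sum_m r_m b_m,
\]
valid since $\sum_m r_m b_m \leq k \sum_{m:\,r_m>0} b_m$ (because each $r_m \leq k$). Combined with $\sum_m r_m b_m = \sum_l b_{m_l} \leq \mu$, this yields the claim with $C_k = k!$. I do not foresee any serious obstacle: the only delicate point is justifying the uniqueness of $(r_m, S_m)_m$, but this is a direct application of $\ZZ$-independence to what, for each fixed tuple, is a finite integer combination of the $b_m$'s and $c_m$'s. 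The hypothesis $nc_n\leq 1$ is not used here (it plays its role in making $\lambda$ an increasing sequence and satisfying $(\mathrm{BC})$).
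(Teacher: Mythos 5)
Your proof is correct, and its skeleton is the same as the paper's: decompose each index as $n=m^2+j$, use the $\ZZ$-independence to see that $\mu$ determines the pattern (in your notation $(r_m,S_m)_m$, in the paper $(\alpha_i,\beta_i,r_i)$), bound the number of ways to distribute the positions by a constant depending only on $k$, bound the number of admissible $j$'s in each block by $(2m+1)^{r_m-1}$, and convert via $2m+1\leq e^{b_m}$. The one place where you genuinely diverge is the endgame. The paper estimates $\sum_m (r_m-1)b_m\leq \mu-\sum_{m:r_m>0}b_m$ and then needs a \emph{lower} bound on $\sum_{m:r_m>0}b_m$ in terms of $\mu$; this forces it to control the $c$-contribution $\sum_m S_m c_m\leq 2k^2$, which is exactly where the hypothesis $nc_n\leq 1$ enters, and it produces the bound $\frac{(k-1)\mu}{k}+2k$ in the exponent. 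You instead use $\sum_{m:r_m>0}b_m\geq \frac1k\sum_m r_m b_m$ (since each $r_m\leq k$) to get
\[
\sum_{m:r_m>0}(r_m-1)b_m\ \leq\ \frac{k-1}{k}\sum_m r_m b_m\ \leq\ \frac{k-1}{k}\,\mu ,
\]
which only uses the trivial upper bound $\sum_m r_m b_m\leq\mu$. This is slightly cleaner: it gives the explicit constant $C_k=k!$ without the extra $e^{2k}$ slack, and, as you correctly observe, it shows the hypothesis $nc_n\leq 1$ is not needed for this particular lemma (it is used elsewhere in the construction, e.g.\ to keep the blocks short enough for the lower bounds and the (BC) property).
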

\begin{proof}
If $\mu$ can be written $\mu=\lambda_{n_1}+\cdots+\lambda_{n_k}$ for some sequence $(n_1,\dots,n_k)$, it can be uniquely written 
\begin{equation}\label{eq:lemcombi2bis}
\mu=\alpha_1 b_{r_1}+\beta_1 c_{r_1}+\cdots+\alpha_l b_{r_l}+\beta_l c_{r_l}
\end{equation}
with $1\leq l\leq k$, $r_1<r_2<\cdots<r_l$, $\alpha_i\geq 1$, $\alpha_1+\cdots+\alpha_l=k$ and $0\leq \beta_i\leq 2\alpha_i r_i$. We will first estimate $\card\left\{(n_1,\dots,n_k)\in\NN^k:\ \lambda_{n_1}+\cdots+\lambda_{n_k}=\mu\right\}$ by a quantity depending on $k$, $l$, $\alpha_i$, $r_i$ and $\beta_i$. In view of the definition of the sequence $\lambda$ and of \eqref{eq:lemcombi2bis}, we are reduced to estimate the number of $2k$-tuples $(m_1,\dots,m_k,j_1,\dots,j_k)$ such that
 for all $s=1,\dots,k$, $0\leq j_s\leq 2m_s$ and, for all $i=1,\dots,l$,
\begin{itemize}
\item there are $\alpha_i$ elements in $m_1,\dots,m_k$ which are equal to $r_i$;
\item if $\phi_i(1),\dots,\phi_i(\alpha_i)$ are the indices of these elements, then 
\begin{equation}\label{eq:lemcombi2}
j_{\phi_i(1)}+\cdots+j_{\phi_i(\alpha_i)}=\beta_i.
\end{equation}
\end{itemize}
We first choose the values of $m_1,\dots,m_k$. We choose the $\alpha_1$ indices in $\{1,\dots,k\}$ such that the corresponding $m_i$ are equal to $r_1$. We then do the same for the $\alpha_2$ elements equal to $r_2$ and so on until $k-1$ (the remaining $m_i$ are fixed and equal to $r_l$). Thus the number of choices for $m_1,\dots,m_k$ is equal to 
$$\binom{k}{\alpha_1}\times\binom{k-\alpha_1}{\alpha_2}\times\cdots\times\binom{k-(\alpha_1+\cdots+\alpha_{l-2})}{\alpha_{l-1}}.$$
Because $l\leq k$ and $\alpha_1+\cdots+\alpha_l=k$, this number can be bounded from above by some number depending only on $k$. The integers $m_1,\dots,m_k$ having been fixed, we now choose the integers $j_1,\dots,j_k$. For each $i\in\{1,\dots,l\}$, \eqref{eq:lemcombi2} implies that there are at most $(2r_i+1)^{\alpha_i-1}$ choices for the values of 
$j_{\phi_i(1)},\dots,j_{\phi_i(\alpha_i)}$: indeed, each $j_{\phi_i(t)}$ belongs to $\{0,\dots,2r_i\}$ and the last one is fixed when we know the values of the first $\alpha_i-1$ ones). Finally, we have found that
\begin{align*}
\card\left\{(n_1,\dots,n_k)\in\NN^k:\ \mu=\lambda_{n_1}+\cdots+\lambda_{n_k}\right\}& \leq C_k\prod_{i=1}^l (2r_i+1)^{\alpha_i-1}\\
&\leq C_k \exp\left(\sum_{i=1}^l (\alpha_i-1)b_{r_i}\right)
\end{align*}
whre the last inequality follows from the assumption $2n+1\leq\exp(b_n)$ for all $n\in\mathbb N$. 
Now we have $\sum_{i=1}^l \alpha_i b_{r_i}\leq\mu$ and 
\begin{align*}
k\sum_{i=1}^l b_{r_i}&\geq \sum_{i=1}^l \alpha_i b_{r_i}= \mu-\sum_{i=1}^l \beta_i c_{r_i}\\
&\geq \mu-2\sum_{i=1}^l \alpha_i r_i c_{r_i} \geq \mu-2k\sum_{i=1}^l r_ic_{r_i}\\
&\geq \mu-2k^2
\end{align*}
since $nc_n\leq 1$ and $l\leq k$. This implies that 
$$\sum_{i=1}^l (\alpha_i-1)b_{r_i}\leq \mu-\frac\mu k +2k=\frac{(k-1)\mu}{k}+2k,$$
hence the result.
\end{proof}

\begin{theorem}
There exists a frequency $(\lambda_n)$ satisfying (BC) such that for all $k\geq1$, $T_\sigma$ maps $\mathcal H_2(\lambda)$ into $\mathcal H_{2k}(\lambda)$ if and only if $\sigma\geq\frac{k-1}{2k}=A(\lambda,k)$.
\end{theorem}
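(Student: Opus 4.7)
The plan is to instantiate Lemma \ref{lem:combi2} with $b_n=\log p_n$, where $p_n$ denotes the $n$-th odd prime (so the $b_n$ are $\ZZ$-independent by unique factorization, $p_n\geq 2n+1$, and $b_n=\log n+\log\log n+O(1)$ by the prime number theorem), and with $c_n\in(0,1/n^3]$ chosen recursively so that $(b_1,\dots,b_N,c_1,\dots,c_N)$ is $\ZZ$-independent for every $N$. Setting $\lambda_{n^2+j}:=b_n+jc_n$ for $n\geq 1$ and $0\leq j\leq 2n$, the within-block gap is $c_n$ and the between-block gap is $b_{n+1}-b_n-2nc_n\geq\log(1+2/p_n)-2/n^2$, which is positive and of order $1/(n\log n)$ for large $n$. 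Since both types of gaps are polynomially bounded below in $n$ while $e^{-\ell\lambda_n}\leq(2n+1)^{-\ell}$, the frequency $\lambda$ satisfies (BC).

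I would next pin down $A(\lambda,k)=(k-1)/(2k)$. The upper bound is immediate from Lemma \ref{lem:combi2}. For the matching lower bound, I would apply Lemma \ref{lem:combi1} inside a single block: for each $n\geq 2^k$ and each integer $\ell\in[(k-\gamma_k)n,(k+\gamma_k)n]$, the number of tuples $(n^2+j_1,\dots,n^2+j_k)$ whose $\lambda$-sum equals $\mu:=kb_n+\ell c_n$ is at least $\delta_k n^{k-1}$. Because $nc_n\leq 1$ and $b_n\sim\log n$, we have $\mu\sim k\log n$, hence $\log(\#)/(2\mu)\to(k-1)/(2k)$.

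For $\sigma\geq(k-1)/(2k)$ the estimate $\#\leq C_k e^{(k-1)\mu/k}$ coming from Lemma \ref{lem:combi2} gives $e^{-2\mu\sigma}\cdot\#\leq C_k$ uniformly in $\mu$, so the strengthened form of Proposition \ref{prop:H2H2K} (the version stated inside its proof, which only requires this uniform bound) yields boundedness of $T_\sigma:\mathcal H_2(\lambda)\to\mathcal H_{2k}(\lambda)$. For the converse I would test with $D_n:=(2n+1)^{-1/2}\sum_{j=0}^{2n}e^{-\lambda_{n^2+j}s}$, which satisfies $\|D_n\|_2=1$. Using the identity $\|f\|_{2k}^{2k}=\|f^k\|_2^2$ (valid since any $\lambda$-Dirichlet group is automatically a $\lambda^{*k}$-Dirichlet group), regrouping the terms of $(T_\sigma D_n)^k$ by the value of $\ell=j_1+\cdots+j_k$, the distinctness of the exponents $kb_n+\ell c_n$, Parseval, and the lower bound in Lemma \ref{lem:combi1}, one obtains
\[ \|T_\sigma D_n\|_{2k}^{2k}\geq C_{k,\sigma}(2n+1)^{-k}n^{2k-1}e^{-2\sigma kb_n}\geq C'_{k,\sigma}\,\frac{n^{k-1}}{(n\log n)^{2\sigma k}}, \]
which diverges precisely when $\sigma<(k-1)/(2k)$ and so rules out boundedness of $T_\sigma$.

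The delicate step is the construction at the start: one must arrange simultaneously (BC), the $\ZZ$-independence of $(b_1,\dots,b_N,c_1,\dots,c_N)$ for every $N$, and the precise growth $b_n\sim\log n$ — this last condition is exactly what forces $A(\lambda,k)$ to equal $(k-1)/(2k)$ rather than a smaller value. Once the sequence is set up, the argument is driven entirely by the combinatorial Lemmas \ref{lem:combi1} and \ref{lem:combi2}, whose upper and lower estimates are tailored to meet exactly at $(k-1)/(2k)$.
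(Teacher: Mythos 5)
Your argument is essentially the paper's own proof: the same two combinatorial lemmas (Lemmas \ref{lem:combi1} and \ref{lem:combi2}), the strengthened form of Proposition \ref{prop:H2H2K} for sufficiency, and the same one-block test polynomials, with $\|T_\sigma D_n\|_{2k}^{2k}$ computed via $\|(T_\sigma D_n)^k\|_2^2$ and Parseval, both for the necessity of $\sigma\geq\frac{k-1}{2k}$ and for $A(\lambda,k)\geq\frac{k-1}{2k}$. The only real difference is the concrete choice $b_n=\log p_n$ and $c_n\leq n^{-3}$, instead of the paper's $b_n\in[\log(2n+1),\log(2n+2)]$ with $c_n$ comparable to $(b_{n+1}-b_n)/n$; the extra $\log\log n$ in $b_n$ is harmless, since it only adds lower-order terms to $\mu\sim k\log n$, and in the necessity step the strict inequality $\sigma<\frac{k-1}{2k}$ leaves a positive power of $n$ that absorbs any power of $\log n$.

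One point needs repair in your construction, namely (BC). You impose only the upper bound $c_n\in(0,n^{-3}]$ together with $\ZZ$-independence, so your claim that ``both types of gaps are polynomially bounded below'' is not justified for the within-block gap, which equals $c_n$ and could be chosen super-exponentially small under your stated constraints; moreover, for $n=1,2$ the estimate $2nc_n\leq 2/n^2$ does not beat $\log(1+2/p_n)$, so even the increasingness of $\lambda$ (separation of consecutive blocks) is not automatic. Both defects disappear if you choose $c_n\in\left[\tfrac12 n^{-3},\,\min\left(n^{-3},\tfrac{b_{n+1}-b_n}{4n}\right)\right]$, which is possible because $\ZZ$-independence excludes only countably many values: then the within-block gap is at least $\tfrac12 n^{-3}\geq Ce^{-3\lambda_{n^2+j}}$ and the between-block gap is at least $(b_{n+1}-b_n)/2\gtrsim 1/(n\log n)$, yielding (BC) exactly as in the proof of Theorem \ref{thm:H1H2}. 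Finally, to pass from ``$T_\sigma$ is unbounded'' to ``$T_\sigma$ does not map $\mathcal H_2(\lambda)$ into $\mathcal H_{2k}(\lambda)$'' you should note, as the paper does, that boundedness is automatic by the closed graph theorem.
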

\begin{proof}
Let $(b_n)$ and $(c_n)$ be two sequences of positive real numbers such that
\begin{itemize}
\item for all $n\geq 1$, $\log(2n+1)\leq b_n\leq\log(2n+2)$;
\item for all $n\geq 1$, $(b_{n+1}-b_n)/8n\leq c_n\leq (b_{n+1}-b_n)/4n$;
\item for all $N\geq 1$, the sequences $(b_1,\dots,b_N,c_1,\dots,c_N)$ are $\ZZ$-independent.
\end{itemize}
We then define $\lambda$ by $\lambda_{m^2+j}=b_m+jc_m$, $m\geq 1$, $j=0,\dots,2m$.  We may argue as in the proof of Theorem \ref{thm:H1H2} to show that the frequency $\lambda$ satisfies (BC). Using Proposition \ref{prop:H2H2K} (look at the first sentence of the proof) and Lemma \ref{lem:combi2}, we get easily that $T_\sigma$ maps $\mathcal H_2(\lambda)$ into $\mathcal H_{2k}(\lambda)$ for $\sigma\geq\frac{k-1}{2k}$ and also that $A(\lambda,k)\leq\frac{k-1}{2k}$.

Conversely, assume that $T_\sigma$ maps $\mathcal H_2(\lambda)$ into $\mathcal H_{2k}(\lambda)$ (boundedness is automatic by the closed graph theorem). Let us consider $D^{[m]}=\sum_{j=0}^{2m}e^{-\lambda_{m^2+j}s}$ for $m\geq 1$, so that $\|D^{[m]}\|_2=\sqrt{2m+1}$. Write $\lambda^{*k}$ as the increasing sequence $(\mu_l)$ and observe that
$$(T_\sigma D^{[m]})^k=\sum_l a_{\mu_l}e^{-\mu_l\sigma}e^{-\mu_l s}$$
where $a_{\mu_l}=\card\left\{(j_1,\dots,j_k)\in\{0,\dots,2m\}^k:\ \mu_l=kb_m+(j_1+\cdots+j_k)c_m\right\}$. Lemma \ref{lem:combi1} tells us that, for $m$ sufficiently large, there is at least $\gamma_k m$ terms of the sequence $(\mu_l)$ so that $a_{\mu_l}\geq \delta_k m^{k-1}$. Furthermore, for those $\mu_l$,
$$\mu_l\leq k(b_m+2mc_m)\leq k\log m+A_k.$$
In particular, 
$$\frac{\log a_{\mu_l}}{2\mu_l}\geq \frac{(k-1)\log m+\log \delta_k}{2(k\log m+A_k)}$$
which shows that $A(\lambda,k)\geq\frac{k-1}{2k}$. Furthermore,
\begin{align*}
\|T_\sigma D^{[m]}\|_{2k}=\|(T_\sigma D^{[m]})^k\|_2^{1/k}&\geq C_k\left(m\cdot m^{2(k-1)}\cdot m^{-2\sigma k}\right)^{1/2k}\\
&\geq C_k m^{\frac1{2k}+\frac{k-1}k-\sigma}.
\end{align*}
Therefore, the boundedness of $T_\sigma$ from $\mathcal H_2(\lambda)$ into $\mathcal H_{2k}(\lambda)$ implies that 
$$ m^{\frac 1{2k}+\frac{k-1}{k}-\sigma}\leq C'_k \sqrt{2m+1}$$
for all sufficiently large $m$, which itself yields
$\sigma\geq\frac{k-1}{2k}$.
\end{proof}

\begin{question}
Let $p\geq 2$. Is it true that, for the previous sequence $(\lambda_n)$, $T_\sigma$ maps $\mathcal H_2(\lambda)$ into $\mathcal H_p(\lambda)$ if and only if $\sigma\geq\frac{p-2}{2p}$?
\end{question}

%%%%%%%%%%%%%%%
%% OTHER RESULTS %%%%%
%%%%%%%%%%%%%%%

\section{Other results}

\subsection{Norm of the projection in $\mathcal H_1(\lambda)$}

In \cite{DS19}, Defant and Schoolmann have shown, using a vector-valued argument, that for all frequencies $\lambda$ and for all $N\geq 1$, $\|S_N\|_{\mathcal H_1(\lambda)\to\mathcal H_1(\lambda)}\leq  \|S_N\|_{\mathcal H_\infty(\lambda)\to\mathcal H_\infty(\lambda)}$. We provide a different approach to estimate $\|S_N\|_{\mathcal H_1(\lambda)\to\mathcal H_1(\lambda)}$, inspired by \cite{BBSS19}.
\begin{proposition}\label{prop:projection}
Let $\lambda$ be a frequency. The, for all $N\geq 1$, $\|S_N\|_{\mathcal H_1\to\mathcal H_1}\leq C\log(\Lambda_N)$ where 
$$\Lambda_N=\sup \left\{\frac{\left\|\sum_{n=1}^N a_n e^{-\lambda_n s}\right\|_2}
{\left\|\sum_{n=1}^N a_n e^{-\lambda_n s}\right\|_1}:\ a_1,\dots,a_N\in\CC\right\}.$$
\end{proposition}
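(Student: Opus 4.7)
My plan would start with a duality reduction. The pairing $(\mathcal{H}_1(\lambda))^*\supset\mathcal H_\infty(\lambda)$ (given by the $L^2(G)$-inner product, interpreted via Abel summation) together with the self-adjointness of $S_N$ yields $\|S_N\|_{\mathcal H_1\to\mathcal H_1}\le\|S_N\|_{\mathcal H_\infty\to\mathcal H_\infty}$, as already used in \cite{DS19}. Thus, it suffices to prove that for every $g\in\mathcal H_\infty(\lambda)$ with $\|g\|_\infty\le 1$, one has $\|S_N g\|_\infty\le C\log(\Lambda_N)$.

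Next I would invoke Theorem \ref{thm:saksman}(b) in the $H_\infty^\lambda(G)$ version already used in the proof of Theorem \ref{thm:BohrNCHinf}. If one can find $\psi\in L^1(\RR)$ with $\hat\psi$ compactly supported such that $\hat\psi(\lambda_n)=1$ for all $n\le N$ and $\hat\psi(\lambda_n)=0$ for all $n>N$, then $R_\psi g=S_N g$ pointwise and $\|S_N g\|_\infty\le\|\psi\|_1\|g\|_\infty$. The task therefore becomes the interpolation problem of constructing such a $\psi$ with $\|\psi\|_1\le C\log(\Lambda_N)$. By Hahn–Banach duality, the minimum value of $\|\psi\|_1$ over admissible $\psi$ is equal (up to absolute constants) to the norm of the functional $Q\mapsto\sum_{n\le N} c_n \overline{c_n'}$ on pairs of trial Dirichlet polynomials, which can be reformulated in terms of $L^1$ versus $L^2$ norms of Dirichlet polynomials of degree $\le N$. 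This is precisely where the Nikolski-type constant $\Lambda_N$ will enter, through the inequality $\|Q\|_2\le\Lambda_N\|Q\|_1$ for any such $Q$.

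The concrete construction would proceed by a dyadic decomposition inspired by \cite{BBSS19}: partition the admissible interpolants at dyadic scales into $O(\log\Lambda_N)$ layers, each built from a triangle cutoff of the type used in the proof of Theorem \ref{thm:BohrNC}, and whose $L^1$-norm contribution is $O(1)$ by the $\|\widehat u\|_1$-estimate (Lemma-style bound using the sine cardinal). The $L^2/L^1$ gap is closed one dyadic scale at a time, and the total budget is the sum of $O(\log\Lambda_N)$ contributions of size $O(1)$. The main obstacle I foresee is matching the dyadic scales of the interpolant to the coefficient structure so that, on each scale, the ratio between the $L^2$-mass and $L^1$-mass of the residual polynomial drops by a factor $2$; this is exactly where the inequality $\|Q\|_2\le\Lambda_N\|Q\|_1$ must be applied uniformly and not merely globally, and executing this carefully—perhaps via iteration of a Saksman-type smoothing operator whose $L^1$ norm is bounded absolutely—is the technical heart of the argument.
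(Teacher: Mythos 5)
Your plan has two genuine gaps, and the first one is fatal to the whole strategy. Reducing to $\|S_N\|_{\mathcal H_\infty\to\mathcal H_\infty}$ goes in the wrong direction: to conclude you would need $\|S_N\|_{\mathcal H_\infty(\lambda)\to\mathcal H_\infty(\lambda)}\leq C\log(\Lambda_N)\leq C\log N$ for every frequency, which is a strictly stronger statement than the proposition and is precisely what is \emph{not} available. The whole point of Proposition \ref{prop:projection} (see the discussion following its corollary) is that the best known $\mathcal H_\infty$ bounds, namely Theorem \ref{thm:BohrNCHinf}, can be enormously worse than $\log N$: for the frequency of Example \ref{ex:sequenceNC} they are of size $e^{n^2}$ or $2^n$ at $N=2^n$, while $\log N\sim n$. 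So a proof that routes through $\mathcal H_\infty$ cannot yield the stated bound with the tools at hand. The second gap is in the interpolation step itself. If $\psi\in L^1(\RR)$ satisfies $\hat\psi(\lambda_n)=1$ for $n\leq N$ and $\hat\psi(\lambda_n)=0$ for $n>N$, then testing against any $\lambda$-Dirichlet polynomial $P$ gives $|S_NP(0)|\leq\|\psi\|_1\|P\|_\infty$, so the minimal admissible $\|\psi\|_1$ is bounded below by the sup-norm partial-sum constant, which is governed by quantities like $\log\bigl((\lambda_{N+1}+\lambda_N)/(\lambda_{N+1}-\lambda_N)\bigr)$ and can be arbitrarily large while $\Lambda_N\leq\sqrt N$ stays small. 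Hence your Hahn--Banach identification of $\inf\|\psi\|_1$ with an $L^2$-versus-$L^1$ (Nikolskii-type) quantity is incorrect: the dual extremal problem involves $L^\infty$ norms of Dirichlet polynomials, and $\Lambda_N$ simply does not control it. The dyadic construction therefore has no mechanism to close.

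The paper's proof avoids $\mathcal H_\infty$ and multipliers entirely and works directly in $H_1^\lambda(G)$. For $\veps\in(0,1)$, Hölder's inequality with exponents $(1+\veps)/\veps$ and $1+\veps$ gives
\begin{equation*}
\|S_Nf\|_1\leq\|S_Nf\|_2^{\frac{2\veps}{1+\veps}}\,\|S_Nf\|_{1-\veps}^{\frac{1-\veps}{1+\veps}},
\end{equation*}
one bounds $\|S_Nf\|_2\leq\Lambda_N\|S_Nf\|_1$ by the definition of $\Lambda_N$, and the decisive external input is Helson's theorem \cite{Hel58} on groups with ordered dual, which yields $\|S_Nf\|_{1-\veps}\leq(A/\veps)\|f\|_1$ with $A$ absolute (independent of $N$, $\lambda$ and $\veps$). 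Absorbing $\|S_Nf\|_1$ and choosing $\veps=1/\log(\Lambda_N)$ gives the bound. If you want to salvage your write-up, replace the $\mathcal H_\infty$ reduction and the $\psi$-construction by this $L^{1-\veps}$ substitute for the projection; the Saksman formula of Theorem \ref{thm:saksman}, which served you well elsewhere, is not the right tool here.
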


\begin{proof}
We work in $H_1^\lambda(G)$ where $(G,\beta)$ is a $\lambda$-Dirichlet group. Let $g\in L_1(G)$. Then, for $\veps\in(0,1)$, 
\begin{align*}
\int |g|&=\int |g|^{\frac{2\veps}{1+\veps}}|g|^{\frac{1-\veps}{1+\veps}}\\
&\leq \left(\int |g|^2\right)^{\frac\veps{1+\veps}}
\left(\int |g|^{1-\veps}\right)^{\frac 1{1+\veps}} 
\end{align*}
where we have applied H\"older's inequality for $(1+\veps)/\veps$ and $1+\veps$. Applying this to $S_N f$, where $f\in H_1^\lambda(G)$, we get
$$\|S_N f\|_1\leq \|S_N f\|_2^{\frac{2\veps}{1+\veps}}\|S_N f\|_{1-\veps}^\frac{1-\veps}{1+\veps}.$$
We now use a result of Helson \cite{Hel58}, saying that
$$\|S_N f\|_{1-\veps}\leq \frac A\veps \|f\|_1$$
where the constant $A$ is absolute. Therefore, assuming $\|f\|_1=1$, after some simplifications, we get
\begin{align*}
\|S_N f\|_1&\leq \left(\frac A\veps\right)\exp\left(\frac{2\veps}{1-\veps}\log(\Lambda_N)\right).
\end{align*}
We conclude by choosing $\veps=1/\log(\Lambda_N)$.
\end{proof}
\begin{corollary}
There exists $C>0$ such that, for all frequency $\lambda$, $\|S_N\|_{\mathcal H_1(\lambda)\to\mathcal H_1(\lambda)}\leq C\log(N)$.
\end{corollary}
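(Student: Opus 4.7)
The plan is to combine Proposition \ref{prop:projection} with a crude but universal bound on the constant $\Lambda_N$. The key point is that for any $\lambda$-Dirichlet polynomial $P=\sum_{n=1}^N a_n e^{-\lambda_n s}$, each coefficient $a_n$ is recovered (via the Bohr correspondence) as a Fourier coefficient $\widehat{f}(h_{\lambda_n})$ of the associated function $f\in H_1^\lambda(G)$, where the $h_{\lambda_n}$ are unimodular characters. Hence
\[
|a_n|=\left|\int_G f\,\overline{h_{\lambda_n}}\,dm_G\right|\leq \|f\|_1=\|P\|_{\mathcal H_1(\lambda)}.
\]

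With this pointwise bound on the coefficients in hand, I would estimate $\Lambda_N$ as follows. By Parseval in $H_2^\lambda(G)$ (the $h_{\lambda_n}$ are orthonormal),
\[
\|P\|_2^2=\sum_{n=1}^N |a_n|^2\leq N\,\max_{1\leq n\leq N}|a_n|^2\leq N\|P\|_1^2,
\]
so $\|P\|_2\leq\sqrt{N}\,\|P\|_1$. Taking the supremum over all non-zero choices of $(a_1,\ldots,a_N)$ gives $\Lambda_N\leq\sqrt{N}$, with a constant that is genuinely independent of the frequency $\lambda$.

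Plugging this into Proposition \ref{prop:projection} yields
\[
\|S_N\|_{\mathcal H_1(\lambda)\to\mathcal H_1(\lambda)}\leq C\log(\Lambda_N)\leq C\log(\sqrt{N})=\tfrac{C}{2}\log N,
\]
which is the desired conclusion. There is no real obstacle here: the only subtlety is justifying the universal coefficient bound $|a_n|\leq\|P\|_1$, but this is immediate from the group-theoretic definition of $\mathcal H_1(\lambda)$ recalled in the introduction, and it is precisely the step where we use that no internal structure of $\lambda$ beyond orthonormality of the characters $h_{\lambda_n}$ is needed.
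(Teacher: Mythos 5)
Your proposal is correct and follows essentially the same route as the paper: both bound $\Lambda_N\leq\sqrt{N}$ using the coefficient estimate $|a_n|\leq\|D\|_1$ (which, as you note, is immediate from $a_n=\widehat f(h_{\lambda_n})$ and the orthonormality of the characters) and then invoke Proposition \ref{prop:projection}. The only cosmetic difference is that the paper phrases the $\ell^2$ estimate via the Cauchy--Schwarz inequality while you bound $\sum_{n\leq N}|a_n|^2\leq N\max_n|a_n|^2$ directly, which is the same computation.
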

\begin{proof}
We get immediately that $\Lambda_N\leq \sqrt N$ by using the Cauchy-Schwarz inequality and the fact that $|a_n|\leq \|D\|_1$ for all $D=\sum_{n=1}^N a_n e^{-\lambda_n s}$.
\end{proof}

This last corollary has an interest provided we are unable to prove that $\|S_N\|_{\mathcal H_\infty(\lambda)\to\mathcal H_\infty(\lambda)}$ is less than $C\log(N)$. The best known estimation on $\|S_N\|_{\mathcal H_\infty(\lambda)\to\mathcal H_\infty(\lambda)}$ is given by Theorem \ref{thm:BohrNCHinf} and indeed it provides worst estimations for some sequences $\lambda$. Indeed, pick the sequence $\lambda$ defined in Example \ref{ex:sequenceNC}. Let $N=2^n$ for some $n$ and pick $M>N$. Then, if $M<2^{n+1}$, then 
$$\log\left(\frac{\lambda_M+\lambda_N}{\lambda_M-\lambda_N}\right)\geq \frac 12e^{n^2}$$
whereas, if $M\geq 2^{n+1}$, then 
$$M-N-1\geq 2^n -1.$$
A variant of Proposition \ref{prop:projection} was already used in the classical case $\lambda=(\log n)$ to prove that $\|S_N\|_{\mathcal H_1\to\mathcal H_1}\leq C\frac{\log N}{\log\log N}$. A precise solution to the problem on how large can be $\Lambda_N$ in this case can be found in \cite{DP17}.

% \bibliographystyle{amsplain}
% \bibliography{biblio,bibliomoi,preprint}

\providecommand{\bysame}{\leavevmode\hbox to3em{\hrulefill}\thinspace}
\providecommand{\MR}{\relax\ifhmode\unskip\space\fi MR }
% \MRhref is called by the amsart/book/proc definition of \MR.
\providecommand{\MRhref}[2]{%
  \href{http://www.ams.org/mathscinet-getitem?mr=#1}{#2}
}
\providecommand{\href}[2]{#2}

\end{document}